\documentclass[12pt]{article}

\usepackage[margin=1in]{geometry} 
\usepackage{amsmath,amsthm,amssymb,color}
\usepackage{graphicx}
\usepackage{mathtools}
\usepackage[section]{placeins}
\usepackage{listings}
\usepackage{mathdots}
\usepackage{centernot}
\usepackage{algorithm}
\usepackage[noend]{algpseudocode}
\usepackage{csquotes}
\usepackage[table,xcdraw]{xcolor}
\usepackage{booktabs}
\usepackage{enumerate}
\usepackage{tikz}
\usetikzlibrary{calc}
\usepackage{bbm}
\usetikzlibrary{arrows}
\usetikzlibrary{patterns}

\newcommand\independent{\protect\mathpalette{\protect\independenT}{\perp}}
\def\independenT#1#2{\mathrel{\rlap{$#1#2$}\mkern2mu{#1#2}}}

\theoremstyle{plain}
\newtheorem{thm}{Theorem}
\theoremstyle{plain}
\newtheorem{lem}{Lemma}
\theoremstyle{plain}
\theoremstyle{plain}
\newtheorem{cor}{Corollary}
\theoremstyle{plain}
\newtheorem{defin}{Definition}
\theoremstyle{remark}
\theoremstyle{plain}
\theoremstyle{plain}
\theoremstyle{plain}
\theoremstyle{plain}

\newcommand{\mR}{\mathbb{R}}

\newcommand{\E}{\mathbb{E}}

\makeatletter
\def\BState{\State\hskip-\ALG@thistlm}
\makeatother

\begin{document} 	

 
\title{\LARGE \bf Causal Models on Probability Spaces} 
\author{\Large Irineo Cabreros\thanks{Program in Applied and Computational Mathematics, Princeton University, Princeton, NJ 08544 USA. Email: \texttt{cabreros@math.princeton.edu}.}  \ and John D. Storey\thanks{Lewis-Sigler Institute for Integrative Genomics, Princeton University, Princeton, NJ 08544 USA. Email: \texttt{jstorey@princeton.edu}.}}
\date{}
\maketitle

\begin{abstract}
We describe the interface between measure theoretic probability and causal inference by constructing causal models on probability spaces within the potential outcomes framework. We find that measure theory provides a precise and instructive language for causality and that consideration of the probability spaces underlying causal models offers clarity into central concepts of causal inference. By closely studying simple, instructive examples, we demonstrate insights into causal effects, causal interactions, matching procedures, and randomization. Additionally, we introduce a simple technique for visualizing causal models on probability spaces that is useful both for generating examples and developing causal intuition. Finally, we provide an axiomatic framework for causality and make initial steps towards a formal theory of general causal models. 
\end{abstract}

\section{Introduction}

The goal of causal inference is to understand mechanistic relationships between random variables. Beyond simply observing that smokers have a higher rate of lung cancer than non-smokers,  for instance, causal inference aims to determine whether lung cancer is a downstream effect of the act of smoking. As random variables are probabilistic objects, probability theory is intrinsic to causality. 



Despite the centrality of probability in causal inference, the precise relationship between the two has historically been contested. For instance, it has long been emphasized that probabilistic relationships often have no causal interpretation, as any discussion of causality is quick to remark that ``correlation is not causation." The earliest recorded distinctions between dependence and causation predate the introduction of the correlation coefficient itself. Fechner, who in 1851 differentiated between a ``causal dependency" and a ``functional relationship" in his work on mathematical psychology \cite{fechner}, is possibly the first to articulate this distinction \cite{heckman}. 

In contrast, Karl Pearson, the eponym of the Pearson correlation, held that correlation subsumed causation. In his influential book \textit{The Grammar of Science} \cite{pearson_1910}, Pearson states:
\begin{quote}
It is this conception of correlation between two occurrences embracing all relationships from absolute independence to complete dependence, which is the wider category by which we have to replace the old idea of causation. 
\end{quote}

\noindent To Pearson, causation was simply perfect co-occurance: a correlation coefficient of exactly $\pm 1$ \cite{aldrich1995}. Notions of causality beyond probabilistic correlation, Pearson argued, were outside the realm of scientific inquiry \cite{pearl_why}.

Pearson's view of causality is far from the main formulations of causality today. Under our modern understanding of causality, one can easily construct examples in which $X$ and $Y$ have correlation 1, however neither $X$ is causal for $Y$ nor $Y$ is causal for $X$. Both $X$ and $Y$ may be the result of some common confounding cause, for instance. Likewise, one can construct examples of systems in which the observed correlation between $X$ and $Y$ is 0, however $X$ is causal for $Y$. $X$ may be confounded with a third variable $Z$, which masks the effect of $X$ on $Y$ in the population. Causality and correlation are now viewed as conceptually distinct phenomena. 

The earliest attempts to define causality in a manner that resembles our current conception avoided probabilistic language altogether. A representative example of an early definition of causality, typically credited to Marshall \cite{marshall_1890}  though likely of earlier origins \cite{ceteris}, is paraphrased as follows.

\begin{defin}[Early notion of causality (\textit{Ceteris Paribus})]\label{def:informal_causality}
\label{causal_definition}
$X$ is said to be causal for $Y$ if directly manipulating the value of $X$, keeping everything else unchanged, changes the value of $Y$. 
\end{defin}



\noindent While Definition \ref{def:informal_causality} is intuitively appealing---providing a practical description of causality for controlled laboratory settings---it clearly lacks mathematical rigor. In particular, it is unclear how to translate the idea of a ``direct manipulation" into probabilistic language. 

Viewed within the measure theoretic framework of probability, Definition \ref{def:informal_causality} is particularly problematic. A pair of random variables $X$ and $Y$ defined on the same probability space $(\Omega, \mathcal{F}, P)$ are determined by a common source of randomness: the selection of a random outcome $\omega \in \Omega$. Thus, it is not at all clear why ``directly" manipulating the value of $X$ would have an impact on $Y$. Classical probability allows random variables to convey information about each other, but only through the symmetric notion of probabilistic dependence. Conversely, causal inference hopes to distinguish directionality; the statement ``smoking causes lung cancer" is distinct from the statement ``lung cancer causes smoking." Where causal inference seeks to draw arrows between random variables ($X \rightarrow Y$), classical probability treats $X$ and $Y$ symmetrically in that both are functions of a single random outcome, $X(\omega)$ and $Y(\omega)$.  


The central aim of this work is to clearly explain how causal models can be constructed within the measure theoretic framework of classical probability theory. We take as our starting point the Neyman-Rubin model (NRM) of potential outcomes \cite{neyman, rubin_1974, holland_1986}, and describe the structure of the probability space on which these potential outcomes are defined. From this perspective, we will see that a precise definition of causality can be couched in the standard probabilistic language of measure theory. Rather than defining causality in terms of ``direct manipulations" of $X$, we will define $X$ as causal for $Y$ if the potential outcomes $Y_{X = x}$ are unequal on subsets of nonzero measure. We emphasize throughout this work that causal models are probabilistic models with structured constraints between observed and unobserved (i.e., potential outcome) random variables.

We should be clear that we do not claim to unify probability theory with causality. The notion \textit{ceteris paribus} from Definition \ref{def:informal_causality} was formalized in probabilistic language as early as 1944 by Haavelmo \cite{haavelmo_1944, heckman}. Today, probability is the common language of all modern causal inference frameworks. Within the Directed Acyclic Graphs (DAG) framework, causal relationships are discovered by searching for sets random variables satisfying certain conditional independence relationships \cite{pearl_book, sgs_book, lauritzen_book}. Within the potential outcomes framework of causality \cite{neyman, rubin_1974, holland_1986}, the primary goal is to estimate causal effects, defined in terms of expectations of partially observable random variables (e.g., the $\text{ACE}$). In each framework, causal relationships map onto probabilistic relationships, which are in turn diagnosed by statistical tests. The contribution of the present work is not to unify causality with probability, but rather to explicate fundamental concepts of modern causal inference in the language of measure theory.

Clarifying the interface between causality and measure theory is useful for several reasons. First, measure theory provides a simplifying perspective for understanding the basic framework of causality. Classical probability theory, we will find, is completely sufficient to describe causal models. Second, the measure theoretic perspective is an insightful one. For instance, we find that consideration of the underlying probability spaces provides insight into experimental procedures (such as randomization) and non-experimental procedures (such as matching). Additionally, a simple method of visualizing causal models on probability spaces, which we employ throughout this work, enables one to generate and reason about a rich set of instructive examples. Third, by making explicit the relationship between causality and measure theory, we hope to initiate interest in applying the tools from measure theory to further develop causal inference. 

The remainder of this work is organized as follows. In Section \ref{sec:review} we provide a brief overview of the measure theoretic framework of probability theory. We also introduce examples, notation, and a method for visualizing causal models that will frequently be used in later sections. In Section \ref{sec:two_vars} we closely examine the simplest causal system: two binary random variables. Here we review the potential outcomes framework within the language of probability spaces, emphasizing that potential outcomes are simply random variables in the familiar sense of classical probability theory. We also introduce a  formal definition of causality and a formal model for experimental randomization in this simple system. In Section \ref{sec:three_vars}, we consider a system of three binary random variables, an incrementally more complex system that introduces several new conceptual challenges. First, we see how two random variables may be jointly causal for a third random variable, despite neither being individually causal. We also re-examine the concept of matching--a popular method of causal inference in the observational setting--from the measure theoretic perspective. Finally, in Section \ref{sec:generalization} we expand the ideas developed in Sections \ref{sec:two_vars} and \ref{sec:three_vars} to more general causal models.

\section{Background and notation: probability spaces and visual representation}\label{sec:review}

In the present section, we provide a brief review of the measure theoretic framework of classical probability theory, both to establish notation and to introduce a method for visualizing probabilistic systems that we will use throughout this work. For a more detailed review of classical probability theory, please refer to Appendix \ref{app:detailed_background}

The central construct within the measure theoretic framework of probability theory is the \textbf{\textit{probability space}}. Denoted by the triple $(\Omega, \mathcal{F}, P)$, the probability space consists of a \textbf{\textit{sample space}} ($\Omega$), a \textbf{\textit{$\sigma$-algebra}} ($\mathcal{F}$), and a \textbf{\textit{probability measure}} ($P$). A \textit{\textbf{random variable}} $X$ is an $\mathcal{F}$-measurable function, mapping elements $\omega \in \Omega$ (called \textit{\textbf{random outcomes}}) to $\mathbb{R}$. Somewhat counter-intuitively, random variables are deterministic functions of $\omega$. Perfect knowledge of $\omega$ implies perfect knowledge of a random variable; uncertainty in $\omega$ results in uncertainty in a random variable. A random variable $X$ and a probability measure $P$ together define the \textbf{\textit{probability law}} $P_X$, which maps elements $B$ of the Borel $\sigma$-algebra $\mathcal{B}$ to $\mathbb{R}$ as follows:
\begin{align*}
P_X(B) &\equiv P( X^{-1} (B) )
\end{align*}
\noindent The goal of classical statistical inference is to understand the probability law $P_X$ from observed realizations of the random variable $X(\omega)$.

Throughout this work, we will find it convenient to visually represent random variables on a simple probability space probability space, which we call the \textit{\textbf{square space}}. The square space is defined by the triple $(\Omega, \mathcal{F}, P) = ([0,1]^2, \mathcal{B}_2, \mu_2)$, where the sample space $[0,1]^2$ is the unit square in $\mR^2$, $\mathcal{B}_2$ is the Borel $\sigma$-algebra on $[0,1]^2$, and $\mu_2$ is the two-dimensional Lebesgue measure (equivalent to the common notion of ``area"). We will find the square space particularly useful because it is both amenable to visualization and flexible enough to accommodate many probabilistic systems. In Figure \ref{fig:rv}, we represent a binary random variable $X$ on the square space. In this, and in all following examples, shaded regions of the sample space correspond to the pre-image of 1 for the corresponding binary random variable. Therefore, all points in the upper half of $\Omega$ map to 1 and all points in the lower half of $\Omega$ map to 0. Since the underlying probability measure is the Lebesgue measure, the probability law for $X$ is that of a fair coin: $P_X(0) = P_X(1) = \frac{1}{2}$.

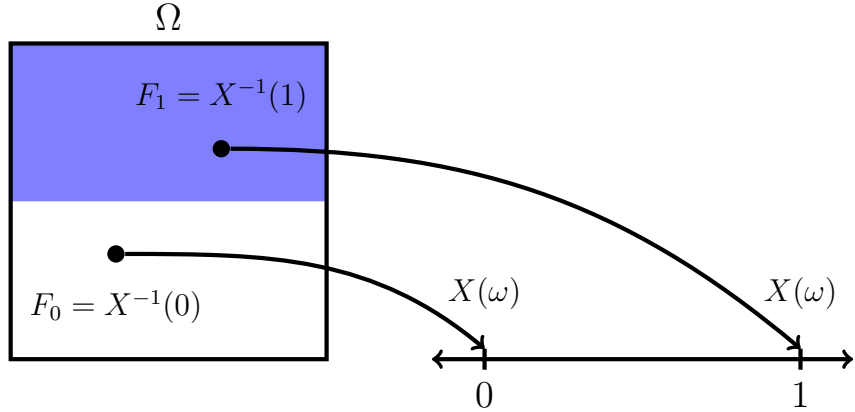
\begin{figure}
\begin{center}
\begin{tikzpicture}[scale = 0.7]

\coordinate (CO) at (-3, 3);
\coordinate (CA) at (6, 3);
\coordinate (F1) at ($(CO) + (0, -2.2)$);
\coordinate (F0) at ($(CO) + (2.5, -2.2)$);

\coordinate (nw) at (-3, 3);
\coordinate (ne) at (3, 3);
\coordinate (se) at (3, -3);
\coordinate (sw) at (-3, -3);

\coordinate (nwa) at (-3, 3);
\coordinate (nea) at (3, 3);
\coordinate (sea) at (3, -3);
\coordinate (swa) at (-3, -3);

\coordinate (omega_dot) at ( $(CO) + (1, -1)$ );
\coordinate (r2_dot) at ( $(CA) + (2, 2)$ );
\coordinate (y_dot) at ( $(CA) + (-2, 2)$ );
\coordinate (x_dot) at ( $(CA) + (2, -2)$ );

\fill[blue, opacity = 0.5] ($(CO) + (ne)$) -- ($(CO) + (3, 0)$) -- ($(CO) + (-3, 0)$) -- ($(CO) + (nw)$) -- cycle;

\draw[ultra thick] ( $(CO) + (nw)$ ) -- ( $(CO) + (ne)$ ) -- ( $(CO) + (se)$ ) -- ( $(CO) +  (sw)$ ) -- cycle;
\draw ( $(CO) + (0, 3.5)$ ) node {\large $\Omega$};

\draw[<->, ultra thick] ($(CA) + (-4, -3) $) -- ( $(CA) + (4, -3)$);
\draw[-, ultra thick] ( $(CA) + (-3, -3 + 0.2)$ ) -- ( $(CA) + (-3, -3 - 0.2)$ ) node[below] {\large $0$};
\draw[-, ultra thick] ( $(CA) + (3, -3 + 0.2)$ ) -- ( $(CA) + (3, -3 - 0.2)$ ) node[below] {\large $1$};

\tikzstyle{vertex_unobserved}=[circle,fill=black, text width = 1mm, align = center, inner sep=1.5pt]
\path
($(CO) + (1, 1)$) node[vertex_unobserved](omega1) {}
($(CO) + (-1, -1)$) node[vertex_unobserved](omega2) {}  
($(CO) + (-1, -2)$) node(F1) {$F_0 = X^{-1}(0)$}
($(CO) + (1, 2)$) node(F2) {$F_1 = X^{-1}(1)$}
($(CO) + (12, -1.75)$) node(arrow1) {$X(\omega)$}
($(CO) + (6, -1.75)$) node(arrow1) {$X(\omega)$};
\draw[->, ultra thick] (omega2) to [out=0,in= 140] ($(CA) + (-3, -3 + 0.2)$);
\draw[->, ultra thick] (omega1) to [out=0,in=140] ($(CA) + (3, -3 + 0.2)$);

\end{tikzpicture}
\end{center}
\caption{A binary random variable $X$ on the square sample. Shaded regions denote the pre-image of 1. Black points correspond to individual elements of the sample space. }\label{fig:rv}
\end{figure}

Multiple random variables can be defined on a single probability space with multivariate probability laws defined in the natural way. If $X$ and $Y$ are two random variables defined on $(\Omega, \mathcal{F}, P)$, then the multivariate random variable $(X, Y)$ is defined as the following map between $\Omega$ and $\mathbb{R}^2$:
\begin{align*}
(X,Y)(\omega) = (X(\omega), Y(\omega)) \in \mR^2
\end{align*}
\noindent The joint probability law $P_{X,Y}$ is defined as a map between $\mathcal{B}_2$, the Borel $\sigma$-algebra on $\mathbb{R}^2$, and $\mathbb{R}$:
\begin{align*}
P_{X,Y}(B_2) = P( (X,Y)^{-1} (B_2) )
\end{align*}
\noindent If for any Borel rectangle $B_X \times B_Y \in \mathcal{B}_2$, we have the relationship
\begin{align*}
P_{X, Y}(B_X \times B_Y) = P_X(B_X)P_Y(B_Y)
\end{align*}
\noindent then $X$ and $Y$ are called \textbf{\textit{independent}}. Otherwise, $X$ and $Y$ are \textbf{\textit{dependent}}. 

In Figure \ref{fig:mult}, we represent two binary random variables $X$ and $Y$ simultaneously on the square space. $X$ is defined as in Figure \ref{fig:rv}, while $Y$ maps $\omega$ from the upper right triangle to 1. The region where both $X$ and $Y$ map $\omega$ to 1 is shaded darker; in this region, $(X, Y)(\omega) = (1, 1)$. In this example, $X$ and $Y$ are dependent. This can be seen qualitatively from Figure \ref{fig:mult} by noting that the distribution of $Y$ differs on the subsets $X^{-1}(1)$ and $X^{-1}(0)$.

\begin{figure}
\begin{center}
\begin{tikzpicture}[scale = 0.7]

\coordinate (CO) at (-3, 3);
\coordinate (CA) at (5, 3);

\coordinate (nw) at (-3, 3);
\coordinate (ne) at (3, 3);
\coordinate (se) at (3, -3);
\coordinate (sw) at (-3, -3);

\coordinate (nwa) at (-3, 3);
\coordinate (nea) at (3, 3);
\coordinate (sea) at (3, -3);
\coordinate (swa) at (-3, -3);

\coordinate (omega1) at ( $(CO) + (-1.5, -2.5)$ );
\coordinate (omega2) at ( $(CO) + (1.5, -2.5)$ );
\coordinate (omega3) at ( $(CO) + (1.5, 2.5)$ );
\coordinate (omega4) at ( $(CO) + (-1, 2.0)$ );
\coordinate (rdot1) at ( $(CA) + (-2, 2)$ );
\coordinate (rdot2) at ( $(CA) + (-2, -2)$ );
\coordinate (rdot3) at ( $(CA) + (2, -2)$ );
\coordinate (rdot4) at ( $(CA) + (2, 2)$ );


\fill[blue, opacity = 0.5] ($(CO) + (nw)$) -- ($(CO) + (ne)$) -- ($(CO) + (3, 0)$) -- ($(CO) + (-3, 0)$) -- cycle;
\fill[red, opacity = 0.5] ($(CO) + (ne)$) -- ($(CO) + (se)$) -- ($(CO) + (nw)$) -- cycle;

\draw[ultra thick] ( $(CO) + (nw)$ ) -- ( $(CO) + (ne)$ ) -- ( $(CO) + (se)$ ) -- ( $(CO) +  (sw)$ ) -- cycle;
\draw ( $(CO) + (0, 3.5)$ ) node {\large $\Omega$};

\draw[->,ultra thick] ($(CA) + (-2.3, -2) $) -- ( $(CA) + (3, -2)$ ) node[right]{$x$};
\draw[->,ultra thick] ($(CA) + (-2, -2.3) $) -- ( $(CA) + (-2, 3)$ ) node[right]{$y$};
\draw ( $(CA) + (0, 3.5)$ ) node {\large $\mathbb{R}^2$};
\draw[-, ultra thick] ( $(CA) + (-2, -2.3)$ ) -- ( $(CA) + (-2, -2.3)$ ) node[below] {\large $0$};
\draw[-, ultra thick] ( $(CA) + (2, -1.8)$ ) -- ( $(CA) + (2, -2.2)$ ) node[below] {\large $1$};
\draw[-, ultra thick] ( $(CA) + (-2.3, -2)$ ) -- ( $(CA) + (-2.3, -2)$ ) node[left] {\large $0$};
\draw[-, ultra thick] ( $(CA) + (-1.8, 2)$ ) -- ( $(CA) + (-2.2, 2)$ ) node[left] {\large $1$};

\tikzstyle{vertex_observed}=[circle,fill=red, text width = 1mm, align = center, inner sep=1.5pt]
\tikzstyle{vertex_unobserved}=[circle,fill=black, text width = 1mm, align = center, inner sep=1.5pt]
\path (rdot4) node[vertex_unobserved](rdot4) {}
      (omega4) node[vertex_unobserved](omega4) {}
      ($(1.5,3.5)$) node(xy) {$(X, Y)(\omega)$};
\draw[->, ultra thick] (omega4) to [out=340,in=200] (rdot4);
\end{tikzpicture}
\end{center}
\caption{A system of two binary random variables $X$ and $Y$ defined on the square space. The pre-image of 1 for the random variable $X$ is the upper half of $\Omega$. The pre-image of 1 for the random variable $Y$ is the upper right triangle.}\label{fig:mult}
\end{figure}
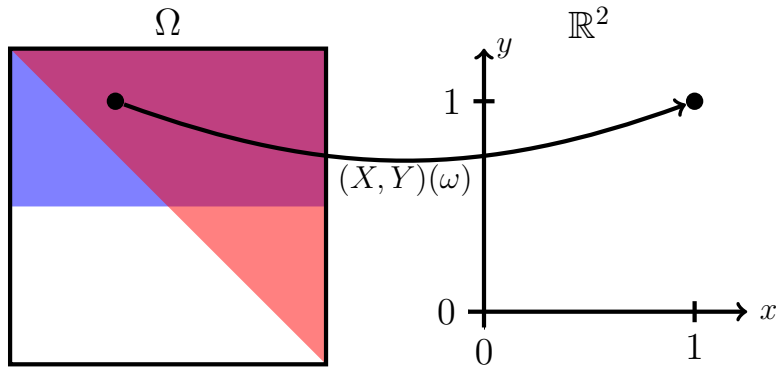

Two probability spaces $(\Omega_1, \mathcal{F}_1, P_1)$ and $(\Omega_2, \mathcal{F}_2, P_2)$ can be used to construct a third probability space, called the \textit{\textbf{product space}}:
\begin{align*}
(\Omega, \mathcal{F}, P) = (\Omega_1 \times \Omega_2, \mathcal{F}_1 \times \mathcal{F}_2, P_1 \times P_2)
\end{align*}
\noindent A feature of the product space construction, which we will make use of in our discussion of experimental randomization, is that it induces independence between random variables. In particular, when $X$ is defined on $(\Omega_1, \mathcal{F}_1, P_1)$ and $Y$ is defined on $(\Omega_2, \mathcal{F}_2, P_2)$, $X$ and $Y$ are independent random variables when defined jointly on the product space $(\Omega_1 \times \Omega_2, \mathcal{F}_1 \times \mathcal{F}_2, P_1 \times P_2)$. Figure \ref{fig:prod_space} displays a product space construction. In this example, 
\begin{align*}
(\Omega_1, \mathcal{F}_1, P_1) = (\Omega_2, \mathcal{F}_2, P_2) = ([0,1], \mathcal{B}_1, \mu_1)
\end{align*}
\noindent where $\mathcal{B}_1$ is the Borel $\sigma$-algebra on $[0,1]$ and $\mu_1$ is the one-dimensional Lebesgue measure. The product space is therefore the square space, $([0, 1]^2, \mathcal{B}_2, \mu_2)$, and $X$ and $Y$ are independent random variables by construction.

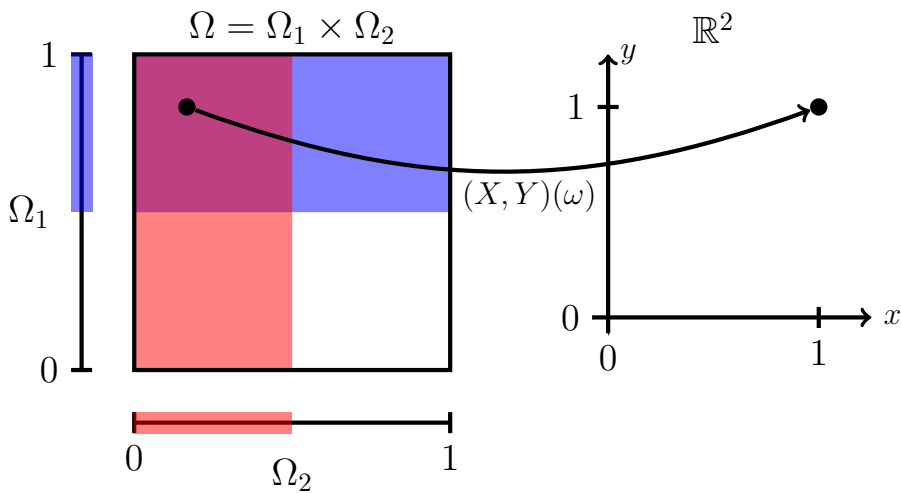
\begin{figure}
\begin{center}
\begin{tikzpicture}[scale = 0.7]

\coordinate (CO) at (5, 3);
\coordinate (CA) at (13, 3);
\coordinate (F) at (-2, 3);

\coordinate (nw) at (-3, 3);
\coordinate (ne) at (3, 3);
\coordinate (se) at (3, -3);
\coordinate (sw) at (-3, -3);

\coordinate (nwa) at (-3, 3);
\coordinate (nea) at (3, 3);
\coordinate (sea) at (3, -3);
\coordinate (swa) at (-3, -3);

\coordinate (omega) at ( $(CO) + (-2, 2)$ );
\coordinate (rdot) at ( $(CA) + (2, 2)$ );


\fill[blue, opacity = 0.5] ($(CO) + (ne)$) -- ($(CO) + (3, 0)$) -- ($(CO) + (-3, 0)$) -- ($(CO) + (nw)$) -- cycle;
\fill[red, opacity = 0.5] ($(CO) + (nw)$) -- ($(CO) + (0, 3)$) -- ($(CO) + (0, -3)$) -- ($(CO) + (sw)$) -- cycle;

\draw[-, ultra thick] ($(CO) + (-3, -4) $) --  ( $(CO) + (3, -4)$);
\draw[-, ultra thick] ( $(CO) + (-3, -4+0.2)$ ) -- ( $(CO) + (-3, -4-0.2)$ ) node[below] 
{\large $0$};
\draw[-, ultra thick] ( $(CO) + (3, -4+0.2)$ ) -- ( $(CO) + (3, -4-0.2)$ ) node[below] 
{\large $1$};
\fill[red, opacity = 0.5] ( $(CO) + (0, -4+0.2)$ ) -- ( $(CO) + (0, -4-0.2)$ ) -- ( $(CO) + (-3, -4-0.2)$ ) -- ( $(CO) + (-3, -4+0.2)$ ) -- cycle;
\path ($(CO) + (0, -5)$) node(O1) {\large $\Omega_2$}; 

\draw[-, ultra thick] ($(CO) + (-4, 3) $) -- ( $(CO) + (-4, -3)$);
\draw[-, ultra thick] ( $(CO) + (-4+0.2, -3)$ ) -- ( $(CO) + (-4-0.2, -3)$ ) node[left] 
{\large $0$};
\draw[-, ultra thick] ( $(CO) + (-4+0.2, 3)$ ) -- ( $(CO) + (-4-0.2, 3)$ ) node[left] 
{\large $1$};
\fill[blue, opacity = 0.5] ( $(CO) + (-4+0.2, 0)$ ) -- ( $(CO) + (-4-0.2, 0)$ ) -- ( $(CO) + (-4-0.2, 3)$ ) -- ( $(CO) + (-4+0.2, 3)$ ) -- cycle;
\path ($(CO) + (-5, 0)$) node(O1) {\large $\Omega_1$};

\draw[ultra thick] ( $(CO) + (nw)$ ) -- ( $(CO) + (ne)$ ) -- ( $(CO) + (se)$ ) -- ( $(CO) +  (sw)$ ) -- cycle;
\draw ( $(CO) + (0, 3.5)$ ) node {\large $\Omega = \Omega_1 \times \Omega_2$};

\draw[->,ultra thick] ($(CA) + (-2.3, -2) $) -- ( $(CA) + (3, -2)$ ) node[right]{$x$};
\draw[->,ultra thick] ($(CA) + (-2, -2.3) $) -- ( $(CA) + (-2, 3)$ ) node[right]{$y$};
\draw ( $(CA) + (0, 3.5)$ ) node {\large $\mathbb{R}^2$};
\draw[-, ultra thick] ( $(CA) + (-2, -2.3)$ ) -- ( $(CA) + (-2, -2.3)$ ) node[below] {\large $0$};
\draw[-, ultra thick] ( $(CA) + (2, -1.8)$ ) -- ( $(CA) + (2, -2.2)$ ) node[below] {\large $1$};
\draw[-, ultra thick] ( $(CA) + (-2.3, -2)$ ) -- ( $(CA) + (-2.3, -2)$ ) node[left] {\large $0$};
\draw[-, ultra thick] ( $(CA) + (-1.8, 2)$ ) -- ( $(CA) + (-2.2, 2)$ ) node[left] {\large $1$};

\tikzstyle{vertex_unobserved}=[circle,fill=black, text width = 1mm, align = center, inner sep=1.5pt]
\path (rdot) node[vertex_unobserved](rdot) {}
      (omega) node[vertex_unobserved](omega) {}
      ($(9.5,3.3)$) node(xy) {$(X, Y)(\omega)$};
\draw[->, ultra thick] (omega) to [out=340,in=200] (rdot);

\end{tikzpicture}
\end{center}
\caption{The probability space $(\Omega, \mathcal{F}, P)$ is formed by the product of spaces the spaces $(\Omega_1, \mathcal{F}_1, P_1)$ and $(\Omega_2, \mathcal{F}_2, P_2)$. The two binary random variables $X$ and $Y$ defined separately on the original probability spaces are independent on the product space.}\label{fig:prod_space}
\end{figure}

Before discussing causal models in the following sections, it is important to note that measure theoretic framework of probability just discussed initially seems at odds with causal intuitions. In particular, the causal notion of random variables affecting one another is unnatural under the measure theoretic model in which all random variables are functions of a single random outcome selected from the sample space. Later we will see that this contradiction is superficial. Causal models are a special class of probabilistic models, with structured relationships between observed and unobserved (i.e., potential outcome) random variables. 

\section{Causal inference on two variables}\label{sec:two_vars}

The minimal causal model, and by far the most studied, is that of a binary treatment and a binary response. For the sake of simplicity, this is where we begin. We frame our discussion around the quintessential causal inference question: Does smoking cause lung cancer?

\subsection{Smoking and lung cancer}\label{sec:smoking_lung}

We model both smoking ($X$) and lung cancer ($Y$) as binary random variables on the square space as in Figure \ref{fig:mult}. In this example, the marginal probability of both smoking and lung cancer is $1/2$. A natural (but incorrect) approach one may take to quantify the effect of smoking on lung cancer is to estimate the \textbf{\textit{Average Observed Effect}}:
\begin{equation}\label{eq:aoe}
\text{AOE} \equiv \E[Y | X = 1] - \E[Y | X = 0]
\end{equation}   

\noindent For this particular example, $\text{AOE} = 3/4 - 1/4 = 1/2$. 

As a population quantity, the AOE must be estimated. Given a dataset of $n$ i.i.d. realizations of the bivariate random variable $(X,Y)$, one can compute the quantity: 
\begin{align*}
\widehat{\text{AOE}} &= \frac{1}{n_1}\sum_{i:X^{(i)} = 1}Y^{(i)} - \frac{1}{n_0}\sum_{i:X^{(i)} = 0}Y^{(i)} 
\end{align*}

\noindent  where $n_1 = \sum_i X_i$, $n_0 = n - n_1$, and $(X^{(i)}, Y^{(i)})$ denotes the $i^{\text{th}}$ sample (superscripts are used rather than subscripts to avoid confusion with notation introduced later). The law of large numbers ensures that $\widehat{\text{AOE}}$ converges to the true AOE as $n \to \infty$. Given enough samples, therefore, the AOE is estimable from the observed data. 

While the AOE is estimable from observable data, it does not generally correspond to any causal quantity. In particular, $\text{AOE} = 1/2$ implies nothing about how the incidence of lung cancer would change under an intervention in which cigarettes are eliminated from society altogether. Importantly, the difference in the conditional means of $Y$ could be completely or partially explained by a third confounding variable $Z$. 

\subsection{Potential outcome random variables}\label{sec:potential_outcomes}

The \textbf{\textit{potential outcomes}} of the Neyman-Rubin model (NRM) \cite{neyman, rubin_book} provide a language for causality distinct from statistical relationships between observed random variables. Following convention, we notate potential outcomes with subscripts and describe them intuitively as follows:

\begin{displayquote}
\begin{center}
$Y_x = \text{``$Y$ if $X$ had been $x$"}$
\end{center}
\end{displayquote}

\noindent If $Y_1 = 0$, then lung cancer would not be observed ($Y = 0$) in this particular individual if he had smoked, irrespective of whether or not he actually did smoke ($X = 1$). Potential outcomes are often described in the language of of ``alternate universes." If $X = 1$, then $Y_1$ is observed as $Y$. On the other hand, $Y_0$ is observed in the alternate universe which is identical to our universe in all respects except for the fact that $X = 0$. 

Though useful for intuition, this description of potential outcomes in terms of counterfactual realities is not stated in terms of probability spaces. In the present work, we emphasize that potential outcomes are familiar objects: random variables mapping $\omega \in \Omega$ to $\mathbb{R}$ defined on the same probability space as the random variables $X$ and $Y$. Potential outcomes are defined here according to a relationship with observable random variables. In the current example, the potential outcomes $Y_x$ are related to the observable random variable $Y$ by the following equation:
\begin{equation}\label{eq:marginalization_simple}
Y(\omega) = I_{0}(\omega) Y_0(\omega) + I_{1}(\omega) Y_1(\omega)  
\end{equation} 
\noindent where $I_{x}$ is the indicator random variable for the event $X = x$. This relationship, further generalized in Section \ref{sec:three_vars} and \ref{sec:generalization} by the \textbf{\textit{contraction}} operation, defines the essential structure of a causal model. 

One important feature of Equation \ref{eq:marginalization_simple} is that $Y_0$ and $Y_1$ are never simultaneously observed for a single $\omega$; $Y_1(\omega)$ is observed only when $X(\omega) = 1$ while $Y_0(\omega)$ is observed only when $X(\omega) = 0$. This observation is typically referred to as the \textbf{\textit{fundamental problem of causal inference}}. As a consequence of the fundamental problem of causal inference, there are generally many distinct sets of potential outcomes consistent with the observed random variables. For example the three distinct sets of potential outcomes $(\tilde{Y}_0, \tilde{Y}_1)$, $(\bar{Y}_0, \bar{Y}_1)$, and $(\check{Y}_0, \check{Y}_1)$ from Figure \ref{fig:tilde_bar_check} are all consistent with the observable random variables in Figure \ref{fig:mult}. This is achieved since $\tilde{Y}_0 = \bar{Y}_0 = \check{Y}_0$ on the pre-image $X^{-1}(0)$, while $\tilde{Y}_1 = \bar{Y}_1 = \check{Y}_1$ on the pre-image $X^{-1}(1)$. However, on $X^{-1}(0)$, the potential outcomes $\tilde{Y}_1$, $\bar{Y}_1$, and $\check{Y}_1$ may differ without altering the observed random variable $Y$. Likewise on $X^{-1}(1)$, the potential outcomes $\tilde{Y}_0$, $\bar{Y}_0$, and $\check{Y}_0$ may differ without altering the observed random variable $Y$.

\begin{figure}
\begin{center}
\begin{tikzpicture}[scale = 0.55]

\coordinate (CO) at (-3, 3);
\coordinate (CY0) at (5, 7);
\coordinate (CY1) at (5, -1);
\coordinate (CY02) at (13, 7);
\coordinate (CY12) at (13, -1);
\coordinate (CY03) at (21, 7);
\coordinate (CY13) at (21, -1);

\coordinate (nw) at (-3, 3);
\coordinate (ne) at (3, 3);
\coordinate (se) at (3, -3);
\coordinate (sw) at (-3, -3);


\fill[blue, opacity = 0.5] ($(CO) + (nw)$) -- ($(CO) + (ne)$) -- ($(CO) + (3, 0)$) -- ($(CO) + (-3, 0)$) -- cycle;

\fill[red, opacity = 0.5] ($(CO) + (ne)$) -- ($(CO) + (se)$) -- ($(CO) + (nw)$) -- cycle;

\fill[red, opacity = 0.5] ($(CY1) + (ne)$) -- ($(CY1) + (se)$) -- ($(CY1) + (nw)$) -- cycle;
\fill[red, opacity = 0.5] ($(CY0) + (ne)$) -- ($(CY0) + (se)$) -- ($(CY0) + (nw)$) -- cycle;

\fill[red, opacity = 0.5] ($(CY02) + (ne)$) -- ($(CY02) + (se)$) -- ($(CY02)$) -- ($(CY02) + (-3, 0)$) -- ($(CY02) + (nw)$) -- cycle;
\fill[red, opacity = 0.5] ($(CY12) + (ne)$) -- ($(CY12) + (3, 0)$) -- ($(CY12)$) -- ($(CY12) + (nw)$) -- cycle;

\fill[red, opacity = 0.5] ($(CY03) + (ne)$) -- ($(CY03) + (se)$) -- ($(CY03)$) -- ($(CY03) + (-3, 0)$) -- ($(CY03) + (nw)$) -- cycle;
\fill[red, opacity = 0.5] ($(CY13) + (ne)$) -- ($(CY13) + (se)$) -- ($(CY13) + (0, -3)$) -- ($(CY13)$) -- ($(CY13) + (nw)$) -- cycle;

\path ($(CY0) + (1, 2)$) node(F1) {$\tilde{Y}_0^{-1}(1)$}
($(CY1) + (1, 2)$) node(F1) {$\tilde{Y}_1^{-1}(1)$}
($(CY02) + (1, 2)$) node(F1) {$\bar{Y}_0^{-1}(1)$}
($(CY12) + (1, 2)$) node(F1) {$\bar{Y}_1^{-1}(1)$}
($(CY03) + (1, 2)$) node(F1) {$\check{Y}_0^{-1}(1)$}
($(CY13) + (1, 2)$) node(F1) {$\check{Y}_1^{-1}(1)$}
($(CO) + (0, -4)$) node(F1) {$\text{(a)}$}
($(CY1) + (0, -4)$) node(F1) {$\text{(b)}$}
($(CY12) + (0, -4)$) node(F1) {$\text{(c)}$}
($(CY13) + (0, -4)$) node(F1) {$\text{(d)}$};

\draw[ultra thick] ( $(CO) + (nw)$ ) -- ( $(CO) + (ne)$ ) -- ( $(CO) + (se)$ ) -- ( $(CO) +  (sw)$ ) -- cycle;
\draw ( $(CO) + (0, 3.5)$ ) node {\large $\Omega$};
\draw[ultra thick] ( $(CY0) + (nw)$ ) -- ( $(CY0) + (ne)$ ) -- ( $(CY0) + (se)$ ) -- ( $(CY0) +  (sw)$ ) -- cycle;
\draw ( $(CY0) + (0, 3.5)$ ) node {\large $\Omega$};
\draw[ultra thick] ( $(CY1) + (nw)$ ) -- ( $(CY1) + (ne)$ ) -- ( $(CY1) + (se)$ ) -- ( $(CY1) +  (sw)$ ) -- cycle;
\draw ( $(CY1) + (0, 3.5)$ ) node {\large $\Omega$};
\draw[ultra thick] ( $(CY02) + (nw)$ ) -- ( $(CY02) + (ne)$ ) -- ( $(CY02) + (se)$ ) -- ( $(CY02) +  (sw)$ ) -- cycle;
\draw ( $(CY02) + (0, 3.5)$ ) node {\large $\Omega$};
\draw[ultra thick] ( $(CY12) + (nw)$ ) -- ( $(CY12) + (ne)$ ) -- ( $(CY12) + (se)$ ) -- ( $(CY12) +  (sw)$ ) -- cycle;
\draw ( $(CY12) + (0, 3.5)$ ) node {\large $\Omega$};
\draw[ultra thick] ( $(CY03) + (nw)$ ) -- ( $(CY03) + (ne)$ ) -- ( $(CY03) + (se)$ ) -- ( $(CY03) +  (sw)$ ) -- cycle;
\draw ( $(CY03) + (0, 3.5)$ ) node {\large $\Omega$};
\draw[ultra thick] ( $(CY13) + (nw)$ ) -- ( $(CY13) + (ne)$ ) -- ( $(CY13) + (se)$ ) -- ( $(CY13) +  (sw)$ ) -- cycle;
\draw ( $(CY13) + (0, 3.5)$ ) node {\large $\Omega$};

\end{tikzpicture}
\end{center}
\caption{(a) The system of random variables $X$ and $Y$ from Figure \ref{fig:mult}. (b)-(d) Alternative sets of potential outcomes $(Y_0, Y_1)$ consistent with the observed random variables $X$ and $Y$.}\label{fig:tilde_bar_check}
\end{figure}
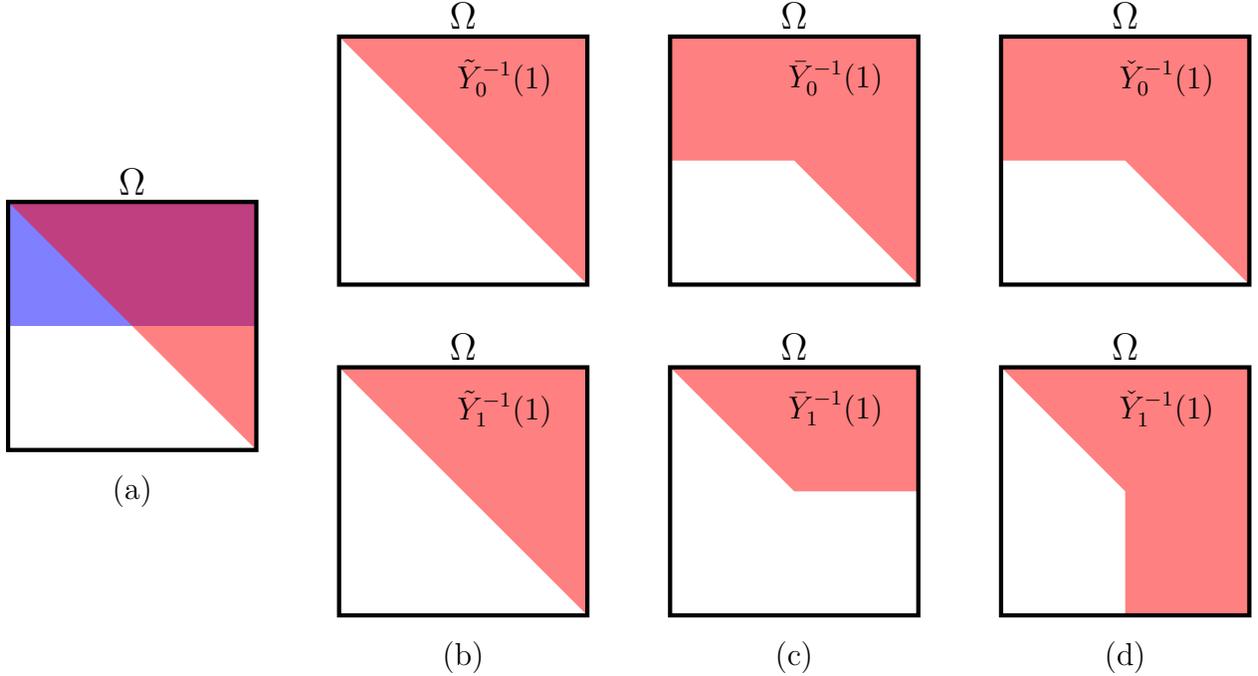

\subsection{Causal effects}\label{sec:causal_effects}

With potential outcomes, we can now define precise notions of causal effects by comparing the random variables $Y_0$ and $Y_1$. The definition below improves on the informal Definition \ref{def:informal_causality} by providing an unambiguous way to assess whether a binary random variable $X$ is causal for another random variable $Y$. 

\begin{defin}[Formal definition of causality]\label{def:causal_definition}
A binary random variable $X$ is causal for another random variable $Y$ (denoted $X \to Y$) if $Y_0(\omega) \ne Y_1(\omega)$ on a subset $F \in \mathcal{F}$ of nonzero measure.
\end{defin}

Referring to Figure \ref{fig:tilde_bar_check}, we see that if the set of potential outcomes are either $(\bar{Y}_0, \bar{Y}_1)$ or $(\check{Y}_0, \check{Y}_1)$, then we would conclude that $X$ is causal for $Y$. However, if the true set of potential outcomes is $(\tilde{Y}_0, \tilde{Y}_1)$, then we would conclude that $X$ is not causal for $Y$. Importantly, each set of potential outcomes is consistent with the observable random variables $X$ and $Y$. Irrespective of how large our sample is, we cannot conclude whether $X$ is causal for $Y$ from the observed data alone. Definition \ref{def:causal_definition} makes clear that the fundamental problem of causal inference is in direct conflict with any attempt to determine causal relationships from observed data. We develop this relationship further in Section \ref{sec:generalization}, where we generalize Definition \ref{def:causal_definition} and the fundamental problem of causal inference beyond the simple treatment and response paradigm discussed in the present section.

As was noted previously, it is typically not the case that we have complete knowledge of the probability space. Rather, we observe realizations of random variables. Through these observations we then try to infer their probability laws. Thus, it is important to have a definition of causality that depends only on distributional information. Perhaps the most important such metric is the \textbf{\textit{average causal effect}} (ACE):
\begin{equation}\label{eq:ace}
\text{ACE} \equiv \E[Y_1] - \E[Y_0]
\end{equation}
Referring again to Figure \ref{fig:tilde_bar_check}, we can compute the following:
\begin{align*}
\E[\tilde{Y}_1] - \E[\tilde{Y}_0] &= 1/2 - 1/2 = 0 \\ 
\E[\bar{Y}_1] - \E[\bar{Y}_0] &= 3/8 - 5/8 = -1/4 \\
\E[\check{Y}_1] - \E[\check{Y}_0] &= 5/8 - 5/8 = 0 
\end{align*}
\noindent If the underlying potential outcomes are $(\tilde{Y}_0, \tilde{Y}_1)$, then the ACE is zero, consistent with the observation that $X$ is not causal for $Y$. However, assuming the potential outcomes are $(\check{Y}_0, \check{Y}_1)$ yields an ACE which is also zero, despite the fact that $X$ is casual for $Y$. Finally, if the potential outcomes are $(\bar{Y}_0, \bar{Y}_1)$, then the ACE is $-1/4$. This is opposite in sign to the observable AOE which we found in Section \ref{sec:smoking_lung} to be $1/2$.

This example suggests that a nonzero ACE implies that $X$ is causal for $Y$ (although the inverse implication does not hold). For example, $\E[\bar{Y}_1] - \E[\bar{Y}_0] \ne 0$ and $X$ is causal for $Y$ under the set of potential outcomes $(\bar{Y}_1, \bar{Y}_0)$ in Figure \ref{fig:tilde_bar_check}. Corollary \ref{cor:ace_causation} below confirms this relationship for the case of binary $X$ and $Y$.

\begin{cor}[$(\text{ACE} \ne 0) \implies (X \to Y)$]\label{cor:ace_causation}
For binary $X$ and $Y$, if $\text{ACE} \ne 0$ then $X$ is causal for $Y$
\end{cor}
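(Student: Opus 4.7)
The plan is to prove the contrapositive: if $X$ is not causal for $Y$, then $\text{ACE} = 0$. This sidesteps the need to reason directly about what a nonzero gap in expectations forces pointwise, and instead lets us reason cleanly from the definition of causality stated in Definition \ref{def:causal_definition}.

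First, I would unpack what ``not causal'' means. Negating Definition \ref{def:causal_definition}, $X$ fails to be causal for $Y$ precisely when every measurable set on which $Y_0$ and $Y_1$ disagree has measure zero. Since $Y_0$ and $Y_1$ are $\mathcal{F}$-measurable, so is $Y_0 - Y_1$, and hence the set $D = \{\omega \in \Omega : Y_0(\omega) \ne Y_1(\omega)\}$ lies in $\mathcal{F}$. Therefore $X$ is not causal for $Y$ if and only if $P(D) = 0$, i.e., $Y_0 = Y_1$ almost surely.

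Next, I would invoke the standard fact from measure theory that integrals are insensitive to changes on null sets: if two integrable random variables coincide almost everywhere, their expectations are equal. Because $Y_0$ and $Y_1$ are binary (hence bounded and integrable), $Y_0 = Y_1$ a.s.\ yields $\E[Y_0] = \E[Y_1]$, so
\begin{equation*}
\text{ACE} = \E[Y_1] - \E[Y_0] = 0.
\end{equation*}
Taking the contrapositive gives the desired implication: $\text{ACE} \ne 0$ forces $X \to Y$.

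There is no real obstacle here; the only subtlety worth flagging explicitly in the write-up is that Definition \ref{def:causal_definition} quantifies over subsets of nonzero measure, so its negation is naturally phrased as ``$Y_0 = Y_1$ almost surely,'' which is precisely the hypothesis under which equality of expectations is automatic. The argument uses nothing special about binary $Y$ beyond integrability, so in fact the same proof gives the corollary for any integrable $Y$, with the binary hypothesis on $X$ used only to ensure that $Y_0$ and $Y_1$ are the only potential outcomes entering the definition of $\text{ACE}$.
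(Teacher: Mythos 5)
Your proof is correct, but it takes a different route from the paper's. You argue by contraposition: non-causality means the disagreement set $D = \{\omega : Y_0(\omega) \ne Y_1(\omega)\}$ is null (and you rightly note $D \in \mathcal{F}$ since $Y_0$ and $Y_1$ are measurable), whence $Y_0 = Y_1$ almost surely and the expectations coincide by the standard fact that integrals ignore null sets. The paper instead argues directly: from $\text{ACE} \ne 0$ and binariness of $Y$ it deduces $P(Y_1^{-1}(1)) \ne P(Y_0^{-1}(1))$, decomposes each of these probabilities as $P(Y_1^{-1}(1) \cap Y_0^{-1}(1))$ plus the measure of the corresponding set difference, cancels the common intersection term, and concludes that at least one of the set differences $Y_1^{-1}(1) \setminus Y_0^{-1}(1)$ or $Y_0^{-1}(1) \setminus Y_1^{-1}(1)$ --- each a subset of $D$ --- has positive measure. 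The paper's decomposition is more hands-on and exhibits an explicit witnessing set, but it genuinely uses that $Y$ is binary; your argument is shorter, leans on a general measure-theoretic lemma rather than a bespoke decomposition, and, as you observe, extends verbatim to any integrable $Y$. Your closing remark correctly identifies the only point of care: the negation of Definition \ref{def:causal_definition} is exactly ``$Y_0 = Y_1$ almost surely,'' which is the hypothesis under which equality of expectations is automatic.
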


\begin{proof}
We first note that
\begin{align*}
(\text{ACE} \ne 0) \implies (\E[Y_1] \ne \E[Y_0]) \implies (P(Y_1^{-1}(1)) \ne P(Y_0^{-1}(1)))
\end{align*}
\noindent since $Y$ is assumed to be binary. Decomposing $P(Y_1^{-1}(1)$ and $P(Y_0^{-1}(1))$,
\begin{align*}
P(Y_1^{-1}(1)) &= P(Y_1^{-1}(1) \cap Y_0^{-1}(1)) + P(Y_1^{-1}(1) \setminus Y_0^{-1}(1)) \\
P(Y_0^{-1}(1)) &= P(Y_1^{-1}(1) \cap Y_0^{-1}(1)) + P(Y_0^{-1}(1) \setminus Y_1^{-1}(1))
\end{align*}
\noindent we notice
\begin{align*}
P(Y_1^{-1}(1) \setminus Y_0^{-1}(1)) \ne P(Y_0^{-1}(1) \setminus Y_1^{-1}(1))
\end{align*}
\noindent Therefore at least one of the events $Y_1^{-1}(1) \setminus Y_0^{-1}(1)$ or $Y_0^{-1}(1) \setminus Y_1^{-1}(1)$ must have nonzero measure. Therefore, by Definition \ref{def:causal_definition}, $X \to Y$. 
\end{proof}

As a brief side note, it is at least conceptually clear how one could generalize Definition \ref{def:causal_definition} to handle non-binary $X$. In particular, if $X(\Omega) \subseteq \mathbb{R}$ denotes the image of $X$, then $X$ is causal for $Y$ if the set of potential outcomes $\{Y_{x}\}_{x \in X(\Omega)}$ differ on a set $F \in \mathcal{F}$ of nonzero measure. However, when $X(\Omega)$ contains infinitely many elements, it may be the case that the potential outcomes $\{Y_{x}\}_{x \in X(\Omega)}$ differ on a subset of $F \in \mathcal{F}$ of nonzero measure, however this occurs for a subset $G \in X(\Omega)$ of zero measure. For instance, suppose $X(\Omega) = [0,1]$ and all of the potential outcomes $\{Y_x\}_{x \in [0,1]}$ are identical except for the potential outcome $Y_1$, which differs from all other potential outcomes on all of $\Omega$. For simplicity, we avoid such subtleties, considering exclusively finite discrete random variables in the present work, where the generalization of Definition \ref{def:causal_definition} is obvious.  

\subsection{Randomization}\label{sec:randomization}
We saw in the previous section a set of observable random variables $(X, Y)$ consistent with many sets of potential outcome random variables $(Y_0, Y_1)$ each implying different causal relationships. We also saw that determining whether $X$ is causal for $Y$ according to Definition \ref{def:causal_definition} is generally impossible since $Y_0$ and $Y_1$ are never simultaneously observable for any single $\omega \in \Omega$. Similarly, computing the ACE is generally impossible since it requires evaluating expectations of random variables $(Y_0, Y_1)$, which we only observe on incomplete and disjoint subsets of the sample space $\Omega$. 

However, when $X$ is independent of the potential outcomes $(Y_0, Y_1)$ (which we will denote as $X \independent (Y_0, Y_1)$ ) estimation of the average causal effect is possible. When this is the case, the following simple argument shows that $\text{AOE} = \text{ACE}$:

\begin{align*}
\text{AOE} &= \E[Y | X = 1] - \E[Y | X = 0] \\
&= \E[I_0 Y_0 + I_1 Y_1 | X = 1] - \E[I_0 Y_0 + I_1 Y_1 | X = 0] \\
&= \E[Y_1 | X = 1] - \E[Y_0 | X = 0] \\
&= \E[Y_1] - \E[Y_0] \\
&= \text{ACE}
\end{align*}

\noindent The second line follows from the first line by applying Equation \ref{eq:marginalization_simple}, which defines our causal model. The fourth line follows from the third line by our assumption that $X$ is independent of the potential outcome random variables. 

In a properly randomized experiment, it is often assumed that $X \independent (Y_0, Y_1)$. In the present section, we describe a measure theoretic model of the process of randomization, which takes advantage of the product measure construction described in Section \ref{sec:product_space}.

\begin{defin}[Experimental randomization of $X$]\label{def:randomization_definition}
Suppose $X$ and $Y$ are defined on a probability space $(\Omega, \mathcal{F}, P)$. An \textbf{\textit{experimental randomization of $X$}} produces a new probability space $(\tilde{\Omega}, \tilde{\mathcal{F}}, \tilde{P})$ and new random variables $\tilde{X}$ and $\tilde{Y}$ defined as follows:

\begin{align}
(\tilde{\Omega}, \tilde{\mathcal{F}}, \tilde{P}) &\equiv (\Omega \times \Omega_R, \mathcal{F} \times \mathcal{F}_R, P \times P_R) \\
\tilde{Y}(\tilde{\omega}) &\equiv \tilde{I}_0(\omega_R) Y_0(\omega) + \tilde{I}_1(\omega_R) Y_1(\omega) \\ 
\tilde{X}(\tilde{\omega}) &\equiv X_R(\omega_R) 
\end{align}

\noindent where $X_R$ is defined arbitrarily a new probability space $(\Omega_R, \mathcal{F}_R, P_R)$ such that $P_{X_R}(x) \in (0, 1)$ for all $x$.
\end{defin}

In an experimental randomization of $X$, the scientist replaces the ``naturally occurring" $X$ with an ``artificially generated" $X_R$, derived from an external source of randomization. An ideal (although unethical) randomized experiment to determine whether smoking causes lung cancer would allow the scientist to force individuals to smoke or not to smoke based on the outcome of a coin toss. Under experimental randomization, the choice to smoke is tied to an external source of randomness, and hence occurs altogether on a separate probability space $(\Omega_R, \mathcal{F}_R, P_R)$. The definition of $\tilde{Y}$ ensures that $\tilde{Y}$ responds to the randomized version ($X_R$) in the same way that it responded to the nonrandomized version ($X$). Defining the new observable random variables $\tilde{X}$ and $\tilde{Y}$ on the product space ensures that $X$ is independent of the potential outcome random variables $(Y_0, Y_1)$ as desired. 

As an example, suppose we experimentally randomize $X$ in the example from Figure \ref{fig:mult}, where the underlying potential outcomes $(Y_0, Y_1)$ are as in Figure \ref{fig:tilde_bar_check}(b). Suppose $X_R$ is defined on the probability space $(\Omega_R, \mathcal{F}_R, P_R) = ([0,1], \mathcal{B}_1, \mu_1)$, where $X_R^{-1}(0) = [0, 1/2]$ and $X_R^{-1}(1) = (1/2, 0]$. Then $P_{X_R}(1) = P_{X_R}(0) = 1/2$ as in the toss of an unbiased coin. Then the random variables $\tilde{X}$ and $\tilde{Y}$ live on the space $(\tilde{\Omega}, \tilde{\mathcal{F}}, \tilde{P}) = ([0,1]^3, \mathcal{B}_3, \mu_3)$, where $\mathcal{B}_3$ represents the Borel $\sigma$-algebra on $[0,1]^3$ and $\mu_3$ represents the three dimensional Lebesgue measure (equivalent to the common notion of volume). 

Figure \ref{fig:randomization} visualizes the randomization system $(\tilde{X}, \tilde{Y})$. We can compute the AOE on the randomized system as follows:

\begin{align*}
\text{AOE} &= \E[\tilde{Y} | \tilde{X} = 1] - \E[\tilde{Y} | \tilde{X} = 0] \\
&= 1/2 - 1/2 \\
&= 0 \\ 
&= \text{ACE}
\end{align*}

\noindent as expected. It may be instructive to verify that $\text{AOE} = \text{ACE}$ upon experimental randomization of $X$ for the three other sets of consistent potential outcomes in Figure \ref{fig:tilde_bar_check}, but geometric intuition should make it clear that this will always work. For the region $\tilde{X}^{-1}(0)$, we observe $Y_0 = \tilde{Y}$ over the entire cross section $\Omega$, so $\E[\tilde{Y} | X = 0] = \E[Y_0]$. The same reasoning makes it clear that $\E[\tilde{Y} | X = 1] = \E[Y_1]$. These two observations imply $\text{AOE} = \text{ACE}$ when $X$ is experimentally randomized. 

Theorem \ref{thm:random} describes an even more important consequence of experimental randomization. If $X$ is experimentally randomized, the probability law of potential outcomes can be deduced from observed conditional probability laws. 

\begin{thm}\label{thm:random}
Under experimental randomization of $X$,
\begin{align*}
P_{Y_x} = P_{\tilde{Y}|\tilde{X} = x}
\end{align*} 
\end{thm}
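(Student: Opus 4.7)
The plan is to unfold the conditional law $P_{\tilde Y \mid \tilde X = x}$ directly from the definitions, use the structural form of $\tilde Y$ to reduce it to $Y_x$ on the relevant event, and then factor the resulting product-measure probability.

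First I would write, for an arbitrary Borel set $B \subseteq \mathbb{R}$,
\begin{align*}
P_{\tilde Y \mid \tilde X = x}(B) \;=\; \frac{\tilde P\bigl(\tilde Y^{-1}(B) \cap \tilde X^{-1}(x)\bigr)}{\tilde P\bigl(\tilde X^{-1}(x)\bigr)},
\end{align*}
which is well-defined because $P_{X_R}(x) \in (0,1)$ by hypothesis, so the denominator is strictly positive. The next step is to identify the two preimages explicitly on $\tilde\Omega = \Omega \times \Omega_R$. Since $\tilde X(\omega, \omega_R) = X_R(\omega_R)$, we have $\tilde X^{-1}(x) = \Omega \times X_R^{-1}(x)$. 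On this event, $\tilde I_x(\omega_R) = 1$ and $\tilde I_{x'}(\omega_R) = 0$ for every $x' \ne x$, so the defining formula for $\tilde Y$ collapses to $\tilde Y(\omega, \omega_R) = Y_x(\omega)$. Consequently
\begin{align*}
\tilde Y^{-1}(B) \cap \tilde X^{-1}(x) \;=\; Y_x^{-1}(B) \times X_R^{-1}(x).
\end{align*}

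With the two sets written as measurable rectangles, the product-measure construction from Section \ref{sec:review} applies: $\tilde P = P \times P_R$ factors on rectangles, giving
\begin{align*}
\tilde P\bigl(Y_x^{-1}(B) \times X_R^{-1}(x)\bigr) \;&=\; P\bigl(Y_x^{-1}(B)\bigr)\, P_R\bigl(X_R^{-1}(x)\bigr) \;=\; P_{Y_x}(B)\, P_{X_R}(x), \\
\tilde P\bigl(\Omega \times X_R^{-1}(x)\bigr) \;&=\; P(\Omega)\, P_R\bigl(X_R^{-1}(x)\bigr) \;=\; P_{X_R}(x).
\end{align*}
Taking the quotient cancels $P_{X_R}(x)$ and yields $P_{\tilde Y \mid \tilde X = x}(B) = P_{Y_x}(B)$ for every Borel $B$, which is the claimed equality of laws.

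The argument is essentially bookkeeping, but the one genuine step that deserves care—and which I would view as the main obstacle—is justifying the pointwise reduction $\tilde Y(\omega, \omega_R) = Y_x(\omega)$ on the product event $\Omega \times X_R^{-1}(x)$. Here it is important that the potential outcomes $Y_0, Y_1$ are defined on the original space $(\Omega, \mathcal{F}, P)$ and are extended to $\tilde\Omega$ as functions of the first coordinate alone, so that the randomization indicators $\tilde I_x(\omega_R)$ and the potential outcomes $Y_x(\omega)$ are measurable on disjoint coordinates. Once this is stated cleanly, the factorization via the product measure is immediate, and no appeal to Fubini's theorem beyond the definition of $P \times P_R$ on rectangles is required.
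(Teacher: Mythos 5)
Your proof is correct and follows essentially the same route as the paper's: both unfold the conditional law, identify $\tilde Y^{-1}(\cdot)\cap\tilde X^{-1}(x)$ as the rectangle $Y_x^{-1}(\cdot)\times X_R^{-1}(x)$ (you via the pointwise collapse of the indicators on $\Omega\times X_R^{-1}(x)$, the paper via writing the preimage of $\tilde Y$ as a union of rectangles and intersecting), and then factor the product measure and cancel $P_R(X_R^{-1}(x))$. Your explicit remarks that the denominator is positive because $P_{X_R}(x)\in(0,1)$ and that the reduction works because the indicators and potential outcomes depend on disjoint coordinates are welcome clarifications but do not change the argument.
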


\begin{proof}
The proof follows from simply writing out the conditional probability explicitly:

\begin{align*}
P_{\tilde{Y} | \tilde{X} = x}(y) &\equiv \frac{\tilde{P}(\{\tilde{Y} = y\} \cap \{\tilde{X} = x\})}{\tilde{P}(\tilde{X} = x)} \\  
&= \frac{\tilde{P}\left(\left\{ \{ Y_0^{-1}(y) \times X_R^{-1}(0)\} \cup \{ Y_1^{-1}(y) \times X_R^{-1}(1)\} \right\}  \cap \{\Omega \times X_R^{-1}(x)\}\right)}{\tilde{P}(\{\Omega \times X_R^{-1}(x))\}} \\  
&= \frac{\tilde{P}(\{Y_{x}^{-1}(y) \times X_R^{-1}(x))\})}{\tilde{P}(\{\Omega \times X_R^{-1}(x))\}} \\  
&= \frac{P(Y_{x}^{-1}(y))P_R(X_R^{-1}(x))}{P(\Omega) P_R(X_R^{-1}(x))} \\
&=  P(Y_{x}^{-1}(y)) \\
&=  P_{Y_x}(y) 
\end{align*}
\end{proof}

The discussion of the present section makes clear why randomization is such a powerful technique. In a properly randomized system, true causal quantities such as the ACE can computed from observed data. However, it is important to recognize the shortcomings of experimental randomization. First, experimental randomization is still inadequate for the purposes of uncovering causality in situations like Figure \ref{fig:tilde_bar_check}d; although $X$ is causal for $Y$ according to Definition \ref{def:causal_definition}, the probability laws $P_{Y_0}$ and $P_{Y_1}$ are identical. Second, the conditions of Definition \ref{def:randomization_definition} are very strict. Beyond just ensuring that $X \independent (Y_0, Y_1)$, experimental randomization requires that $X_R$ can behave as a substitute for $X$ in Equation \ref{eq:marginalization_simple}. For instance, if being involved in a randomized trial induces behavior that has some effect on lung cancer (i.e., cognizance of enrollment in a lung cancer trial may cause participants to pursue a healthier lifestyle), we cannot expect the causal effects computed from the randomized trial to reflect the causal effect of smoking ``in the wild."

\begin{figure}
\begin{center}
\begin{tikzpicture}[scale = 0.55]

\coordinate (CO) at (-3, 3);
\coordinate (C0) at (5, 3);
\coordinate (C1) at (13, 3);

\coordinate (nw) at (-3, 3);
\coordinate (ne) at (3, 3);
\coordinate (se) at (3, -3);
\coordinate (sw) at (-3, -3);
\coordinate (shift) at (-2, -2);
\coordinate (hshift) at (-1, -1);
\coordinate (mcshift) at (-.1, -.1);


\fill[blue, opacity = 0.5] ($(CO) + (nw)$) -- ($(CO) + (ne)$) -- ($(CO) + (ne) + (hshift)$) -- ($(CO) + (nw) + (hshift)$) -- cycle;
\fill[blue, opacity = 0.5] ($(CO) + (ne)$) -- ($(CO) + (se)$) -- ($(CO) + (se) + (hshift)$) -- ($(CO) + (ne) + (hshift)$) -- cycle;

\fill[blue, opacity = 0.5] ($(CO) + (nw) + (hshift) + (0, -2)$) -- ($(CO) + (se) + (hshift) + (-2, 0)$) -- ($(CO) + (sw) + (hshift)$) -- cycle;

\fill[red, opacity = 0.5] ($(CO) + (nw) + (shift)$) -- ($(CO) + (ne) + (shift)$) -- ($(CO) + (se) + (shift)$)-- cycle;

\fill[red, opacity = 0.5] ($(CO) + (nw) + (shift)$) -- ($(CO) + (ne) + (shift)$) -- ($(CO) + (ne) + (hshift)$) -- ($(CO) + (nw) + (hshift)$) -- cycle;
\fill[red, opacity = 0.5] ($(CO) + (ne) + (shift)$) -- ($(CO) + (se) + (shift)$) -- ($(CO) + (se) + (hshift)$) -- ($(CO) + (ne) + (hshift)$) -- cycle;

\fill[red, opacity = 0.5] ($(CO) + (nw)$) -- ($(CO) + (ne)$) -- ($(CO) + (ne) + (hshift)$) -- ($(CO) + (nw) + (hshift)$) -- cycle;
\fill[red, opacity = 0.5] ($(CO) + (ne)$) -- ($(CO) + (se)$) -- ($(CO) + (se) + (hshift)$) -- ($(CO) + (ne) + (hshift)$) -- cycle;

\draw[dashed, black] ($(CO) + (nw)$) -- ($(CO) + (se)$);
\draw[dashed, black] ($(CO) + (nw) + (hshift)$) -- ($(CO) + (se) + (hshift)$);
\draw[black] ($(CO) + (nw) + (hshift)$) -- ($(CO) + (ne) + (hshift)$);
\draw[black] ($(CO) + (ne) + (hshift)$) -- ($(CO) + (se) + (hshift)$);
\draw[dashed, black] ($(CO) + (nw) + (hshift)$) -- ($(CO) + (sw) + (hshift)$);
\draw[dashed, black] ($(CO) + (sw) + (hshift)$) -- ($(CO) + (se) + (hshift)$);
\draw[dashed, black] ($(CO) + (sw) + (shift)$) -- ($(CO) + (sw)$);
\draw[dashed, black] ($(CO) + (sw)$) -- ($(CO) + (nw)$);
\draw[dashed, black] ($(CO) + (sw)$) -- ($(CO) + (se)$);

\draw[black] ($(CO) + (nw) + (shift)$) -- ($(CO) + (se) + (shift)$);
\draw[black] ($(CO) + (nw) + (shift)$) -- ($(CO) + (ne) + (shift)$);
\draw[black] ($(CO) + (ne) + (shift)$) -- ($(CO) + (se) + (shift)$);

\fill[red, opacity = 0.5] ($(C0) + (nw)$) -- ($(C0) + (ne)$) -- ($(C0) + (se)$) -- cycle;

\fill[blue, opacity = 0.5] ($(C1) + (nw)$) -- ($(C1) + (ne)$) -- ($(C1) + (se)$) -- ($(C1) + (sw)$) -- cycle;
\fill[red, opacity = 0.5] ($(C1) + (nw)$) -- ($(C1) + (ne)$) -- ($(C1) + (se)$) -- cycle;

\draw[-, ultra thick] ($(CO) + (se) + (shift) + (1, -1)$) -- ($(CO) + (se) + (1, -1)$);
\draw[-, ultra thick] ($(CO) + (se) + (shift) + (1, -1 + 0.2)$) -- ($(CO) + (se) + (shift) + (1, -1 - 0.2)$) node[below] 
{\large $0$};
\draw[-, ultra thick] ($(CO) + (se) + (1, -1 + 0.2)$) -- ($(CO) + (se) + (1, -1 - 0.2)$) node[below] 
{\large $1$};

\fill[blue, opacity = 0.5]  ($(CO) + (se) + (hshift) + (1, -1 + 0.2)$) -- ($(CO) + (se) + (hshift) + (1, -1 - 0.2)$) -- ($(CO) + (se) + (1, -1 - 0.2)$) -- ($(CO) + (se) + (1, -1 + 0.2)$) -- cycle;
\path ($(CO) + (se) + (hshift) + (2, -2)$) node(O1) {\large $\Omega_R$}; 
\path ($(CO) + (sw) + (hshift) + (2, -2)$) node(O1) {\large $\Omega$};

\draw[ultra thick] ( $(CO) + (nw) + (shift)$ ) -- ( $(CO) + (nw)$ ) -- ( $(CO) + (ne)$ ) -- ( $(CO) +  (ne) + (shift)$ ) -- cycle;
\draw[ultra thick] ( $(CO) + (ne) + (shift)$ ) -- ( $(CO) + (ne)$ ) -- ( $(CO) + (se)$ ) -- ( $(CO) +  (se) + (shift)$ ) -- cycle;
\draw[ultra thick] ( $(CO) + (nw) + (shift)$ ) -- ( $(CO) + (ne) + (shift)$ ) -- ( $(CO) + (se) + (shift)$ ) -- ( $(CO) +  (sw) + (shift)$ ) -- cycle;
\draw ( $(CO) + (0, 3.6)$ ) node {\large $\tilde{\Omega} = \Omega \times \Omega_R$};
\draw[ultra thick] ( $(C0) + (nw)$ ) -- ( $(C0) + (ne)$ ) -- ( $(C0) + (se)$ ) -- ( $(C0) + (sw)$ ) -- cycle;
\draw ( $(C0) + (0, 3.6)$ ) node {\large $\Omega \times 0$};
\draw[ultra thick] ( $(C1) + (nw)$ ) -- ( $(C1) + (ne)$ ) -- ( $(C1) + (se)$ ) -- ( $(C1) + (sw)$ ) -- cycle;
\draw ( $(C1) + (0, 3.6)$ ) node {\large $\Omega \times 1$};

\end{tikzpicture}
\end{center}
\caption{Experimental randomization of $X$ induces a product space structure.}\label{fig:randomization}
\end{figure}
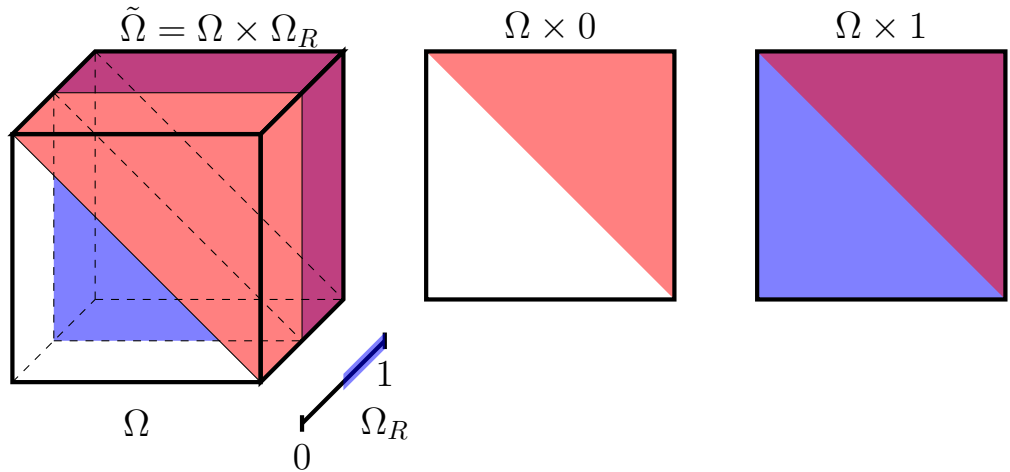

\section{Causal inference on three variables}\label{sec:three_vars}

Several new concepts in causality arise in systems of three observable variables. As such, in this section we study the simplest three-variable system: three binary random variables. We add to our running example of smoking ($X$) and lung cancer ($Y$) a third binary random variable $Z$ representing exercise habits. $Z = 0$ indicates a low level of exercise while $Z = 1$ indicates a high level of exercise. One could imagine exercise habits influencing both lung cancer outcomes and smoking choices. 

\subsection{A comment on notation}
In previous sections we only needed a single subscript to specify potential outcomes. For instance, $Y_1$ implicitly referred to the potential outcome ``$Y$ had $X$ been 1." The potential outcome $Y_0$ from previous sections will now be denoted $Y_{X = 0}$ in order to distinguish it from the potential outcome $Y_{Z = 0}$. Further, the potential outcome ``$Y$ had $X$ been 0 and $Z$ been 1" will be denoted $Y_{(X, Z) = (0, 1)}$. 

\subsection{Contraction}\label{sec:marginalization}

Equation \ref{eq:marginalization_simple} specifies the relationship between potential outcome random variables $(Y_{X = 0}, Y_{X = 1})$ and observable random variables $X$ and $Y$. In the case of three random variables, we might naturally generalize Equation \ref{eq:marginalization_simple} as follows

\begin{align*}
Y(\omega) = \sum_{x}\sum_{z} I_{(X,Z)=(x,z)}(\omega)Y_{(X,Z) = (x,z)}(\omega)
\end{align*} 

\noindent Since Equation \ref{eq:marginalization_simple} must still hold, we have the following equality:

\begin{equation}\label{eq:equality}
\sum_x I_{X = x}(\omega)Y_{X = x}(\omega) = \sum_{x} \sum_{z} I_{(X, Z) = (x, z)}(\omega)Y_{(X, Z) = (x,z)}(\omega)
\end{equation}

\noindent Together with the observation that $I_{(X, Z) = (x, z)}(\omega) = I_{X = x}(\omega)I_{Z = z}(\omega)$, Equation \ref{eq:equality} implies the following relationship between the double-subscripted potential outcomes $Y_{(X, Z) = (x, z)}$ and the single-subscripted potential outcomes $Y_{X = x}$:

\begin{align*}
Y_{X = x}(\omega) = \sum_{z} I_{Z = z}(\omega)Y_{(X, Z) = (x, z)}(\omega)
\end{align*}

\noindent Similar reasoning suggests the following relationship for the potential outcomes $Y_{Z = z}$:

\begin{align*}
Y_{Z = z}(\omega) = \sum_{x} I_{X = x}(\omega)Y_{(X, Z) = (x, z)}(\omega)
\end{align*}

In this manner, any single-subscripted potential outcome may be derived from double-subscripted potential outcomes and observable random variables: by summing over the subscript to be removed and multiplying by the corresponding indicator random variables. We will refer to this operation as contraction, due to its similarity to tensorial contraction. For instance, the set of potential outcomes $\{Y_{X = x}\}$ are obtained from the potential outcomes $\{Y_{(X, Z) = (x, z)}\}$ by ``contraction over $z$." The observable random variable $Y$ can be obtained by ``contracting $\{Y_{(X, Z) = (x, z)}\}$ over $x$ and $z$" or equivalently ``contracting $\{Y_{x}\}$ over $x$." Thus, the simple relationship in Equation \ref{eq:marginalization_simple} represents a contraction. We will formalize and generalize the notion of contraction in Section \ref{sec:generalization}.

\subsection{Joint causality}\label{sec:joint_causality}

In a system of three binary observable random variables $(X, Y, Z)$, Definition \ref{def:causal_definition} is still applicable to pairs of variables. For instance, $X$ is causal for $Y$ if the potential outcomes $\{Y_{X =  x}\}$, obtained by contracting $\{Y_{(X, Z) = (x, z)}\}$ over $z$, are different on a subset of the sample space of nonzero measure. Similarly, one can assess if $Z$ is causal for $Y$ by examining the potential outcomes obtained by contracting over $x$. 

However, it is also possible for $X$ and $Z$ to affect $Y$ in a way not fully explained by their individual effects on $Y$. Figure \ref{fig:joint_causality} displays a particularly pronounced example. Here, neither $X$ nor $Z$ is causal for $Y$ alone according to Definition \ref{def:causal_definition}. This is because $Y_{X = 0} = Y_{X = 1}$ and $Y_{Z = 0} = Y_{Z = 1}$ for all $\omega \in \Omega$. In fact, all single-subscripted potential outcomes $\{Y_{X = x}\}$ and $\{Y_{Z = z}\}$ equal to zero on all of $\Omega$. For example:
\begin{align*}
Y_{X = 0}(\omega) &= I_{Z = 0}(\omega) Y_{(X, Z) = (0, 0)}(\omega) + I_{Z = 1}(\omega) Y_{(X, Z) = (0, 1)}(\omega) \\
&= 0
\end{align*}
\noindent for all $\omega \in \Omega$. This is because $Y_{X = 0} = Y_{(X, Z) = (0, 0)}$ on $Z^{-1}(0)$, where $Y_{(X, Z) = (0, 0)} = 0$. Likewise $Y_{X = 0} = Y_{(X, Z) = (0, 1)}$ on $Z^{-1}(1)$, where $Y_{(X, Z) = (0, 1)} = 0$. Similar calculations can be done for each of the other three single-indexed potential outcomes $Y_{X = 1}$, $Y_{Z = 0}$, and $Y_{Z = 1}$, and one can confirm that each of these potential outcomes is identically zero on all of $\Omega$.

However, the double-subscripted potential outcomes $\{Y_{(X,Z) = (x, z)}\}$ differ from each other on a subset of $\Omega$ of measure one. This is because for all $\omega \in \Omega$ (excluding the measure zero subset along the vertical and horizontal mid-line of $\Omega$), exactly one double-subscripted potential outcome $Y_{(X, Z) = (x, z)}$ is equal to one, with each of the other three equal to zero. In this example, we will say that $X$ and $Z$ are jointly causal for $Y$. 

\begin{figure}
\begin{center}
\begin{tikzpicture}[scale = 0.55]
\coordinate (CX) at (5, 7);
\coordinate (CZ) at (13, 7);
\coordinate (CY) at (21, 7);
\coordinate (CY00) at (1, 0);
\coordinate (CY01) at (9, 0);
\coordinate (CY10) at (1, -7);
\coordinate (CY11) at (9, -7);
\coordinate (CYX0) at (17, 0);
\coordinate (CYX1) at (25, 0);
\coordinate (CYZ0) at (17, -7);
\coordinate (CYZ1) at (25, -7);

\coordinate (nw) at (-3, 3);
\coordinate (ne) at (3, 3);
\coordinate (se) at (3, -3);
\coordinate (sw) at (-3, -3);

\fill[blue, opacity = 0.5] ($(CX) + (nw)$) -- ($(CX) + (-3, 0)$) -- ($(CX) + (3, 0)$) -- ($(CX) + (ne)$) -- cycle;
\fill[green, opacity = 0.5] ($(CZ) + (nw)$) -- ($(CZ) + (0, 3)$) -- ($(CZ) + (0, -3)$) -- ($(CZ) + (sw)$) -- cycle;

\fill[red, opacity = 0.5] ($(CY00) + (nw)$) -- ($(CY00) + (0, 3)$) -- ($(CY00)$) -- ($(CY00) + (-3,0)$) -- cycle;
\fill[red, opacity = 0.5] ($(CY01) + (0,3)$) -- ($(CY01) + (ne)$) -- ($(CY01) + (3, 0)$) -- ($(CY01)$) -- cycle;
\fill[red, opacity = 0.5] ($(CY10) + (-3, 0)$) -- ($(CY10)$) -- ($(CY10) + (0,-3)$) -- ($(CY10) + (sw)$) -- cycle;
\fill[red, opacity = 0.5] ($(CY11)$) -- ($(CY11) + (3,0)$) -- ($(CY11) + (se)$) -- ($(CY11) + (0,-3)$) -- cycle;

\path ($(CX) + (0, 1.5)$) node(F1) {$X^{-1}(1)$}
($(CZ) + (-1.5, 1.5)$) node(F1) {$Z^{-1}(1)$}
($(CY) + (0, 1.5)$) node(F1) {$Y^{-1}(0)$}
($(CYX0) + (0, 1.5)$) node(F1) {$Y_{X = 0}^{-1}(0)$}
($(CYX1) + (0, 1.5)$) node(F1) {$Y_{X = 1}^{-1}(0)$}
($(CYZ0) + (0, 1.5)$) node(F1) {$Y_{Z = 0}^{-1}(0)$}
($(CYZ1) + (0, 1.5)$) node(F1) {$Y_{Z = 1}^{-1}(0)$};
\node [rotate=-45]  at ($(CY00) + (-1.5, 1.5)$) {\tiny $Y_{(X, Z) = (0, 0)}^{-1}(1)$};
\node [rotate=-45]  at ($(CY01) + (1.5, 1.5)$) {\tiny $Y_{(X, Z) = (0, 1)}^{-1}(1)$};
\node [rotate=-45]  at ($(CY10) + (-1.5, -1.5)$) {\tiny $Y_{(X, Z) = (1, 0)}^{-1}(1)$};
\node [rotate=-45]  at ($(CY11) + (1.5, -1.5)$) {\tiny $Y_{(X, Z) = (1, 1)}^{-1}(1)$};

\draw[ultra thick] ( $(CX) + (nw)$ ) -- ( $(CX) + (ne)$ ) -- ( $(CX) + (se)$ ) -- ( $(CX) +  (sw)$ ) -- cycle;
\draw ( $(CX) + (0, 3.5)$ ) node {\large $\Omega$};

\draw[ultra thick] ( $(CZ) + (nw)$ ) -- ( $(CZ) + (ne)$ ) -- ( $(CZ) + (se)$ ) -- ( $(CZ) +  (sw)$ ) -- cycle;
\draw ( $(CZ) + (0, 3.5)$ ) node {\large $\Omega$};

\draw[ultra thick] ( $(CY) + (nw)$ ) -- ( $(CY) + (ne)$ ) -- ( $(CY) + (se)$ ) -- ( $(CY) +  (sw)$ ) -- cycle;
\draw ( $(CY) + (0, 3.5)$ ) node {\large $\Omega$};

\draw[ultra thick] ( $(CY00) + (nw)$ ) -- ( $(CY00) + (ne)$ ) -- ( $(CY00) + (se)$ ) -- ( $(CY00) +  (sw)$ ) -- cycle;
\draw ( $(CY00) + (0, 3.5)$ ) node {\large $\Omega$};

\draw[ultra thick] ( $(CY01) + (nw)$ ) -- ( $(CY01) + (ne)$ ) -- ( $(CY01) + (se)$ ) -- ( $(CY01) +  (sw)$ ) -- cycle;
\draw ( $(CY01) + (0, 3.5)$ ) node {\large $\Omega$};

\draw[ultra thick] ( $(CY10) + (nw)$ ) -- ( $(CY10) + (ne)$ ) -- ( $(CY10) + (se)$ ) -- ( $(CY10) +  (sw)$ ) -- cycle;
\draw ( $(CY10) + (0, 3.5)$ ) node {\large $\Omega$};

\draw[ultra thick] ( $(CY11) + (nw)$ ) -- ( $(CY11) + (ne)$ ) -- ( $(CY11) + (se)$ ) -- ( $(CY11) +  (sw)$ ) -- cycle;
\draw ( $(CY11) + (0, 3.5)$ ) node {\large $\Omega$};

\draw[ultra thick] ( $(CYX0) + (nw)$ ) -- ( $(CYX0) + (ne)$ ) -- ( $(CYX0) + (se)$ ) -- ( $(CYX0) +  (sw)$ ) -- cycle;
\draw ( $(CYX0) + (0, 3.5)$ ) node {\large $\Omega$};

\draw[ultra thick] ( $(CYX1) + (nw)$ ) -- ( $(CYX1) + (ne)$ ) -- ( $(CYX1) + (se)$ ) -- ( $(CYX1) +  (sw)$ ) -- cycle;
\draw ( $(CYX1) + (0, 3.5)$ ) node {\large $\Omega$};

\draw[ultra thick] ( $(CYZ0) + (nw)$ ) -- ( $(CYZ0) + (ne)$ ) -- ( $(CYZ0) + (se)$ ) -- ( $(CYZ0) +  (sw)$ ) -- cycle;
\draw ( $(CY11) + (0, 3.5)$ ) node {\large $\Omega$};

\draw[ultra thick] ( $(CYZ1) + (nw)$ ) -- ( $(CYZ1) + (ne)$ ) -- ( $(CYZ1) + (se)$ ) -- ( $(CYZ1) +  (sw)$ ) -- cycle;
\draw ( $(CYZ1) + (0, 3.5)$ ) node {\large $\Omega$};

\end{tikzpicture}
\end{center}
\caption{A system of three random variables $X$, $Y$, and $Z$ for which $X$ and $Z$ are jointly causal for $Y$, but neither is individually causal for $Y$.}\label{fig:joint_causality}
\end{figure}
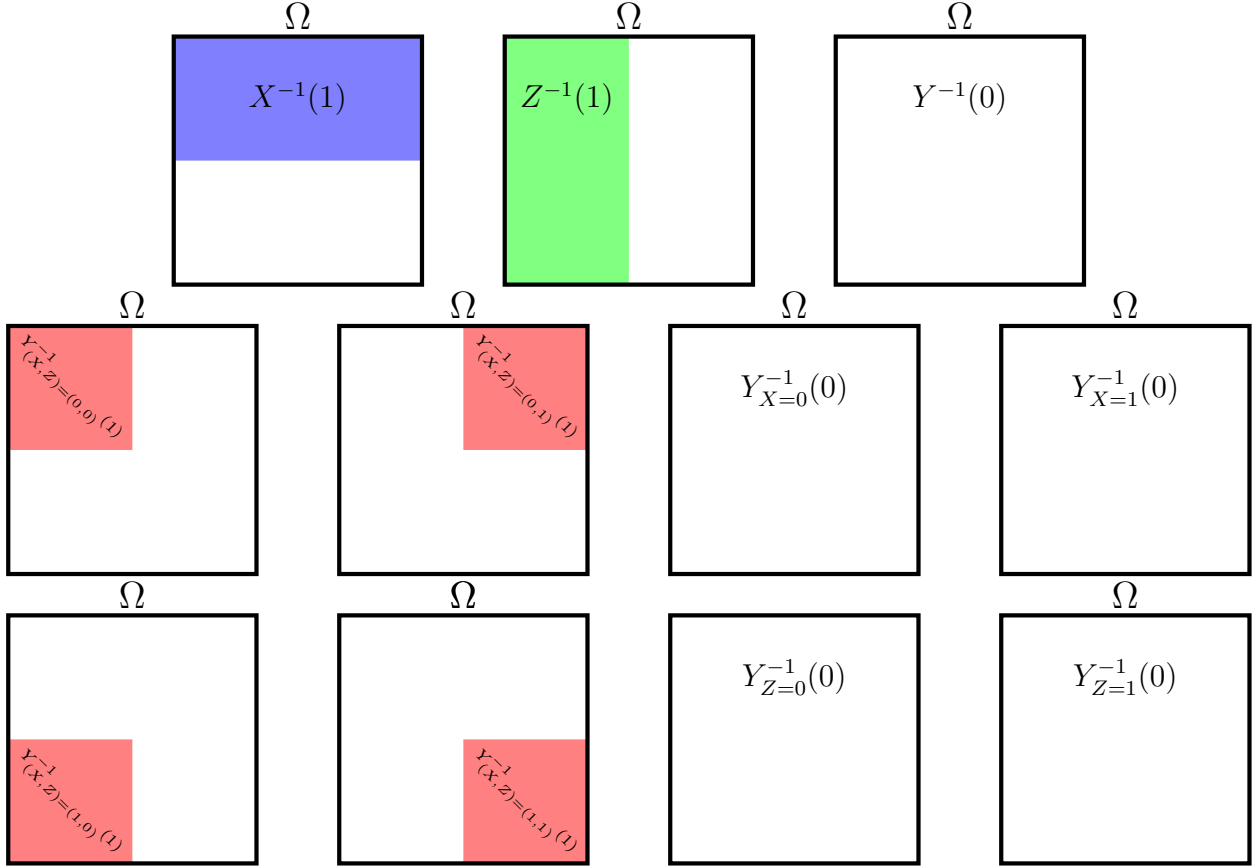

Before precisely defining joint causality, we first recognize that Definition \ref{def:causal_definition} can also apply to causal relationships between observable and potential outcome random variables. Noting that $Y_{X = x}$ is itself a random variable, we can conclude that $Z$ is causal for $Y_{X = x}$ if $Y_{(X, Z) = (x, 0)} \ne Y_{(X, Z) = (x, 1)}$ on a subset $F \in \mathcal{F}$ of nonzero measure. Intuitively, if $z$ is causal for $Y_{X = x}$, the effect $X$ has on $Y$ is modified by the value $Z$. 

However, $Z$ being causal for $Y_{X = x}$ alone does not capture the notion of joint causality. For example, consider the set of potential outcomes $\{Y_{(X, Z) = (x, z)}\}$ displayed in Figure \ref{fig:falsely_joint}. In this case, $Z$ is causal for $Y_{X = 0}$ since $Y_{(X, Z) = (0, 0)}$ and $Y_{(X, Z) = (0, 1)}$ differ on all of $\Omega$. Similarly, $Z$ is also causal for $Y_{X = 1}$. However, the potential outcomes do not depend on the $x$ subscript at all: the value of $Y$ can be determined by $\omega$ and the $z$ subscript alone. 

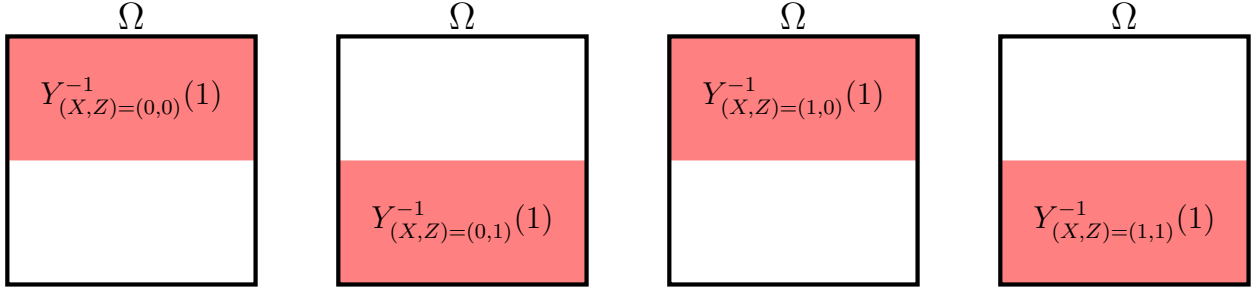
\begin{figure}
\begin{center}
\begin{tikzpicture}[scale = 0.55]
\coordinate (CY00) at (1, 0);
\coordinate (CY01) at (9, 0);
\coordinate (CY10) at (17, 0);
\coordinate (CY11) at (25, 0);

\coordinate (nw) at (-3, 3);
\coordinate (ne) at (3, 3);
\coordinate (se) at (3, -3);
\coordinate (sw) at (-3, -3);

\fill[red, opacity = 0.5] ($(CY00) + (nw)$) -- ($(CY00) + (ne)$) -- ($(CY00) + (3,0)$) -- ($(CY00) + (-3,0)$) -- cycle;
\fill[red, opacity = 0.5] ($(CY01) + (-3, 0)$) -- ($(CY01) + (sw)$) -- ($(CY01) + (se)$) -- ($(CY01) + (3, 0)$) -- cycle;
\fill[red, opacity = 0.5] ($(CY10) + (nw)$) -- ($(CY10) + (ne)$) -- ($(CY10) + (3,0)$) -- ($(CY10) + (-3,0)$) -- cycle;
\fill[red, opacity = 0.5] ($(CY11) + (-3, 0)$) -- ($(CY11) + (sw)$) -- ($(CY11) + (se)$) -- ($(CY11) + (3, 0)$) -- cycle;

\node [rotate=0]  at ($(CY00) + (0, 1.5)$) {$Y_{(X, Z) = (0, 0)}^{-1}(1)$};
\node [rotate=0]  at ($(CY01) + (0, -1.5)$) {$Y_{(X, Z) = (0, 1)}^{-1}(1)$};
\node [rotate=0]  at ($(CY10) + (0, 1.5)$) {$Y_{(X, Z) = (1, 0)}^{-1}(1)$};
\node [rotate=0]  at ($(CY11) + (0, -1.5)$) {$Y_{(X, Z) = (1, 1)}^{-1}(1)$};

\draw[ultra thick] ( $(CY00) + (nw)$ ) -- ( $(CY00) + (ne)$ ) -- ( $(CY00) + (se)$ ) -- ( $(CY00) +  (sw)$ ) -- cycle;
\draw ( $(CY00) + (0, 3.5)$ ) node {\large $\Omega$};

\draw[ultra thick] ( $(CY01) + (nw)$ ) -- ( $(CY01) + (ne)$ ) -- ( $(CY01) + (se)$ ) -- ( $(CY01) +  (sw)$ ) -- cycle;
\draw ( $(CY01) + (0, 3.5)$ ) node {\large $\Omega$};

\draw[ultra thick] ( $(CY10) + (nw)$ ) -- ( $(CY10) + (ne)$ ) -- ( $(CY10) + (se)$ ) -- ( $(CY10) +  (sw)$ ) -- cycle;
\draw ( $(CY10) + (0, 3.5)$ ) node {\large $\Omega$};

\draw[ultra thick] ( $(CY11) + (nw)$ ) -- ( $(CY11) + (ne)$ ) -- ( $(CY11) + (se)$ ) -- ( $(CY11) +  (sw)$ ) -- cycle;
\draw ( $(CY11) + (0, 3.5)$ ) node {\large $\Omega$};

\end{tikzpicture}
\end{center}
\caption{A set of potential outcomes $\{Y_{(X, Z) = (x, z)}\}_{(x, z)}$ for which $Z$ is causal for $Y_{X = 0}$ and $Y_{X = 1}$, however $X$ and $Z$ are not jointly causal for $Y$.}\label{fig:falsely_joint}
\end{figure}

To ensure that we exclude scenarios like that in Figure \ref{fig:falsely_joint}, we define joint causality as follows: 

\begin{defin}[Joint causality]\label{def:joint_causality}
Two binary random variables $X$ and $Z$ are said to be \textbf{\textit{jointly causal}} for a third random variable $Y$ if both of the following hold:
\begin{enumerate}[(i)]
\item $Z$ is causal for $Y_{X = x}$ for some $x$. 
\item $X$ is causal for $Y_{Z = z}$ for some $z$. 
\end{enumerate}
\end{defin}

\noindent According to Definition \ref{def:joint_causality}, $X$ and $Z$ are jointly causal for $Y$ in Figure \ref{fig:joint_causality}, but not jointly causal in Figure \ref{fig:falsely_joint}.

As with Definition \ref{def:causal_definition}, some generalizations of Definition \ref{def:joint_causality} obvious while others are not. For one, Definition \ref{def:joint_causality} does not at all depend on $X$ and $Z$ being binary; the definition is equally applicable to any finite discrete $X$ and $Z$. When either $X$ or $Z$ are continuous, we encounter the same subtleties as in Definition \ref{def:causal_definition}. We can also imagine the definition of joint causality applying to sets of more than two random variables. For three random variables $A$, $B$ and $C$ to be jointly causal for a fourth random variable $Y$, we require i) $A$ to be causal for $Y_{(B, C) = (b, c)}$ for some $(b, c)$ ii) $B$ to be causal for $Y_{(A, C) = (a, c)}$ for some $(a, c)$ and iii) $C$ to be causal for $Y_{(A, B) = (a, b)}$ for some $(a, b)$. The generalization to four or more finite discrete random variables is now straightforward. 

\subsection{Joint randomization}\label{sec:joint_randomization}
In Theorem \ref{thm:random}, we saw that experimental randomization of $X$ allowed us to infer the distribution of the potential outcome $Y_{X = x}$ from the distribution of the observable random variable $\tilde{Y} | \tilde{X} = x$. In the present section, we show how one can simultaneously randomize $X$ and $Z$ to infer the distribution of the potential outcomes $Y_{(X, Z) = (x, z)}$. This procedure of simultaneous randomization, detailed in Definition \ref{def:joint_randomization}, is a natural extension of the procedure detailed in Definition \ref{def:randomization_definition}.

\begin{defin}[Joint experimental randomization of $X$ and $Z$]\label{def:joint_randomization}
Suppose $X$, $Y$, and $Z$ are defined on a probability space $(\Omega, \mathcal{F}, P)$. A \textbf{\textit{joint experimental randomization of $X$ and $Z$}} produces a new probability space $(\tilde{\Omega}, \tilde{\mathcal{F}}, \tilde{P})$ and new random variables $\tilde{X}$, $\tilde{Y}$, and $\tilde{Z}$ defined as follows:

\begin{align}
(\tilde{\Omega}, \tilde{\mathcal{F}}, \tilde{P}) &\equiv (\Omega \times \Omega_R, \mathcal{F} \times \mathcal{F}_R, P \times P_R) \\
\tilde{Y}(\tilde{\omega}) &\equiv \sum_x \sum_z \tilde{I}_{(\tilde{X}, \tilde{Z}) = (x, z)}(\omega_R) Y_{(X, Z) = (x, z)}(\omega) \\ 
\tilde{X}(\tilde{\omega}) &\equiv X_R(\omega_R) \\ 
\tilde{Z}(\tilde{\omega}) &\equiv Z_R(\omega_R) 
\end{align}

\noindent where $X_R$ and $Z_R$ are defined arbitrarily on a probability spaces $(\Omega_R, \mathcal{F}_R, P_R)$ such that $P_{(X_R, Z_R)}(x, z) \in (0, 1)$ for all $(x, y)$.
\end{defin}

In the definition of joint experimental randomization, we do not require $X$ and $Z$ to be randomized on separate probability spaces. In other words, joint experimental randomization of $X$ and $Z$ does not necessarily require $\tilde{X}$ and $\tilde{Z}$ to be independent of each other. Of course, randomizing $X$ and $Z$ on separate probability spaces $(\Omega_R^X, \mathcal{F}_R^X, P_R^X)$ and $(\Omega_R^Z, \mathcal{F}_R^Z, P_R^Z)$ such that the randomized probability space is
\begin{align*}
(\tilde{\Omega}, \tilde{\mathcal{F}}, \tilde{P}) = (\Omega \times \Omega_R^X \times \Omega_R^Z, \mathcal{F} \times \mathcal{F}_R^X \times \mathcal{F}_R^Z, P \times P_R^X \times P_R^Z)
\end{align*}
\noindent also satisfies Definition \ref{def:joint_randomization}.

Lastly we prove that joint experimental randomization allows us to observe the distribution double-subscripted potential outcomes. This result extends Theorem \ref{thm:random}. 

\begin{thm}\label{thm:joint_random}
Under joint experimental randomization of $X$ and $Z$,
\begin{align*}
P_{Y_{(X, Z) = (x, z)}} = P_{\tilde{Y}|(\tilde{X} = x, \tilde{Z} = z)}
\end{align*} 
\end{thm}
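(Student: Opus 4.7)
The plan is to mirror the proof of Theorem \ref{thm:random} step for step, replacing the single treatment variable by the pair $(X, Z)$. First, I would expand the conditional probability law as
\begin{align*}
P_{\tilde{Y}|(\tilde{X}=x, \tilde{Z}=z)}(y) = \frac{\tilde{P}\left(\{\tilde{Y}=y\} \cap \{\tilde{X}=x\} \cap \{\tilde{Z}=z\}\right)}{\tilde{P}\left(\{\tilde{X}=x\} \cap \{\tilde{Z}=z\}\right)},
\end{align*}
which is well-defined because the assumption $P_{(X_R, Z_R)}(x, z) \in (0, 1)$ guarantees the denominator is strictly positive.

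Next, I would translate both events into subsets of the product space $\tilde{\Omega} = \Omega \times \Omega_R$ using the definitions of $\tilde{X}$, $\tilde{Y}$, and $\tilde{Z}$. Since $\tilde{X}(\tilde{\omega}) = X_R(\omega_R)$ and $\tilde{Z}(\tilde{\omega}) = Z_R(\omega_R)$ depend only on the $\Omega_R$ coordinate, the conditioning event is the rectangle $\Omega \times \left(X_R^{-1}(x) \cap Z_R^{-1}(z)\right)$. For the numerator, the key observation is that on this conditioning event every indicator $\tilde{I}_{(\tilde{X}, \tilde{Z}) = (x', z')}$ with $(x', z') \ne (x, z)$ vanishes, so the double sum defining $\tilde{Y}$ collapses to the single summand $Y_{(X, Z) = (x, z)}(\omega)$. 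This gives
\begin{align*}
\{\tilde{Y} = y\} \cap \{\tilde{X} = x\} \cap \{\tilde{Z} = z\} = Y_{(X, Z) = (x, z)}^{-1}(y) \times \left(X_R^{-1}(x) \cap Z_R^{-1}(z)\right),
\end{align*}
which is again a rectangle in the product space.

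Finally, I would apply the product measure $\tilde{P} = P \times P_R$ to factor both numerator and denominator. The factor $P_R\left(X_R^{-1}(x) \cap Z_R^{-1}(z)\right)$ appears in both and cancels, leaving $P\left(Y_{(X,Z)=(x,z)}^{-1}(y)\right)/P(\Omega) = P_{Y_{(X,Z)=(x,z)}}(y)$, which is the desired identity. The step most requiring care is the collapse of the double sum on the conditioning event: this is precisely where \emph{joint} randomization earns its name, since placing the assignment mechanism on the separate factor $\Omega_R$ severs any dependence between which potential outcome $Y_{(X,Z)=(x,z)}$ is selected and the outcome $\omega \in \Omega$ that determines its value. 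Once this collapse is verified, the remainder is the routine product-measure calculation that parallels Theorem \ref{thm:random}.
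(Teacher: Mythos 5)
Your proposal is correct and follows essentially the same route as the paper's proof: expand the conditional law, rewrite the conditioning event as the rectangle $\Omega \times (X_R^{-1}(x) \cap Z_R^{-1}(z))$, collapse the sum defining $\tilde{Y}$ to the single term $Y_{(X,Z)=(x,z)}$ on that event, and cancel the $P_R$ factor under the product measure. Your explicit remarks that the denominator is nonzero by the assumption $P_{(X_R,Z_R)}(x,z) \in (0,1)$ and that the collapse of the double sum is the step where joint randomization is actually used are welcome additions, but the argument itself is the paper's argument.
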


\begin{proof}
The proof is analogous to that of the proof of Theorem \ref{thm:random}. We simply write out the conditional probability explicitly:
\begin{align*}
P_{\tilde{Y} | (\tilde{X} = x, \tilde{Z} = z)}(y) &\equiv \frac{\tilde{P}(\{\tilde{Y} = y\} \cap \{\tilde{X} = x\} \cap \{\tilde{Z} = z\})}{\tilde{P}(\{\tilde{X} = x\} \cap \{\tilde{Z} = z\})} \\  
&= \frac{\tilde{P}\left(\cup_{(x', z')} \left\{ Y_{(X, Z) = (x', z')}^{-1}(y) \times ( X_R^{-1}(x') \cap Z_R^{-1}(z') ) \right\}  \cap \{\Omega \times X_R^{-1}(x)\} \cap \{\Omega \times Z_R^{-1}(z)\} \right)}{\tilde{P}(\{\Omega \times X_R^{-1}(x)\} \cap \{\Omega \times Z_R^{-1}(z)\}} \\
&= \frac{\tilde{P}\left(\cup_{(x', z')} \left\{ Y_{(X, Z) = (x', z')}^{-1}(y) \times ( X_R^{-1}(x') \cap Z_R^{-1}(z') ) \right\}  \cap \{\Omega \times (X_R^{-1}(x) \cap Z_R^{-1}(z))\} \right)}{\tilde{P}(\{\Omega \times (X_R^{-1}(x) \cap Z_R^{-1}(z))\}} \\ 
&= \frac{\tilde{P}\left( Y_{(X, Z) = (x, z)}^{-1}(y) \times ( X_R^{-1}(x') \cap Z_R^{-1}(z') ) \right)}{\tilde{P}(\{\Omega \times (X_R^{-1}(x) \cap Z_R^{-1}(z))\}} \\    
&= \frac{P(Y_{(X, Z) = (x, z)}^{-1}(y))P_R(X_R^{-1}(x) \cap Z_R^{-1}(z))}{P(\Omega) P_R(X_R^{-1}(x) \cap Z_R^{-1}(z))} \\
&=  P(Y_{(X, Z) = (x, z)}^{-1}(y)) \\
&=  P_{Y_{(X, Z) = (x, z)}}(y) 
\end{align*}
\end{proof}

\subsection{Matching from the perspective of probability spaces}

Sections \ref{sec:randomization} and \ref{sec:joint_randomization} describe how experimental randomization can be used to uncover causal relationships. In many applications, however, experimental randomization is not possible due to practical limitations or ethical concerns. As such, much causal inference literature focuses on methods that do not require experimental randomization. While these methods can be applied to data collected in observational settings, they often require strong assumptions. In the present section, we study one particular method for observational causal inference: an elementary matching method called \textit{\textbf{exact paired matching}}. While the shortcomings of exact paired matching have been previously recognized \cite{rosenbaum_1991, cochran_1965}, the purpose of our discussion is to show how a measure theoretic perspective can provide additional clarity. We discuss matching in this section because it necessitates a minimum of three variables: a treatment ($X$), a response ($Y$), and a matching variable ($Z$).


The strategy of exact paired matching is to subsample the original dataset, retaining only pairs of individuals that i) are identical on covariates $\boldsymbol{Z}$ and ii) differ in their receipt of treatment $X$. The matched dataset then consist of $n_M$ triples of the form
\begin{align*}
(Y^{(i,0)}, Y^{(i,1)}, \boldsymbol{Z}^{(i)})
\end{align*}
\noindent where $Y^{(i,0)}$ is the value of the response for the untreated $(X = 0)$ member of the $i^{\text{th}}$ matched pair (and likewise for $Y^{(i,1)}$). Both individuals take the same value $\boldsymbol{Z}^{(i)}$ for the matching variables. The subsampled dataset is then analyzed as if it were obtained from an experimentally randomized experiment. For instance, the sample AOE may be reported as an estimate of the ACE:
\begin{align*}
\widehat{\text{ACE}}_M = \frac{1}{n_M} \sum_i Y^{(i,1)} - \frac{1}{n_M} \sum_i Y^{(i,0)}
\end{align*}

This procedure is motivated by the intuition that matched pairs of individuals are similar in all respects except treatment; it would then seem that differences in their outcomes are more reasonably attributable to differences in their treatment. $\widehat{\text{ACE}}_M$, which may also be written as $\frac{1}{n_M}\sum_{i}(Y^{(i, 1)} - Y^{(0, i)})$, is then the average of these treatment-attributable differences. This argument seems more tenable the more matching variables are used: matched pairs being increasingly comparable. However, including additional covariates in $\boldsymbol{Z}$ poses several challenges. First, even a modest number of covariates may result an unreasonably large set of distinct values of $\boldsymbol{z}$. Twenty binary covariates, for instance, yields more than a million distinct $\boldsymbol{z}$ combinations. A dataset of modest size may have very few (or potentially zero) available triples $(Y^{(i,0)}, Y^{(i,1)}, Z^{(i)})$. The measure theoretic perspective makes clear additional challenges, as we discuss below. 

Let $\boldsymbol{Z} \equiv (Z_1, Z_2, \hdots, Z_K)$ be a set of matching covariates and let $\boldsymbol{Z}_{1:k}$ denote the subset $(Z_1, Z_2, \hdots, Z_k)$. (Note that the subscripts $Z_i$ in this case denote distinct matching variables, rather than potential outcomes.) Let $Z_j(\Omega)$ denote the image of the covariate $Z_j$ and $\boldsymbol{Z}_{1:k}(\Omega) = Z_1(\Omega) \times Z_2(\Omega) \times \hdots \times Z_k(\Omega)$. Throughout this discussion, we assume for simplicity that each matching variable is finite discrete, so that $\boldsymbol{Z}_{1:k}(\Omega)$ is a finite set. We denote $\mathcal{Z}_{1:k}$ as the subset of $\boldsymbol{Z}_{1:k}(\Omega)$ for which both treatment ($X = 1$) and non-treatment ($X = 0$) occur with positive probability:
\begin{align*}
\mathcal{Z}_{1:k} \equiv \{\boldsymbol{z}_{1:k} \in \boldsymbol{Z}_{1:k}(\Omega) : 0 < P(X^{-1}(1) \cap \boldsymbol{Z}_{1:k}^{-1}(\boldsymbol{z}_{1:k})) < 1 \}
\end{align*} 

\noindent We denote $\Omega^M_{1:k}$ the subset of $\Omega$ on which matched pairs are found with positive probability when $\boldsymbol{Z}_{1:k}$ are used as matching variables. $\Omega^M_{1:k}$ can be expressed as follows:
\begin{align}
\Omega^M_{1:k} \equiv \bigcup_{\boldsymbol{z}_{1:k} \in \mathcal{Z}_{1:k}} \boldsymbol{Z}_{1:k}^{-1}(\boldsymbol{z}_{1:k}) \label{eq:omega_M}
\end{align}

\noindent The following result shows that as $k$ increases, $\Omega^M_{1:k}$ form nested subsets.
\begin{thm}[$\Omega^M_{1:k}$ form nested subsets]\label{thm:omega_M}
Under exact paired matching, 
\begin{align*}
\Omega^M_{1:k} \supseteq \Omega^M_{1:(k+1)}
\end{align*}
\end{thm}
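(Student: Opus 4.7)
The plan is to prove the containment pointwise: take an arbitrary $\omega \in \Omega^M_{1:(k+1)}$ and exhibit a $\boldsymbol{z}_{1:k} \in \mathcal{Z}_{1:k}$ whose preimage contains $\omega$. By the definition in Equation \ref{eq:omega_M}, membership $\omega \in \Omega^M_{1:(k+1)}$ supplies some $\boldsymbol{z}_{1:(k+1)} = (z_1, \ldots, z_{k+1}) \in \mathcal{Z}_{1:(k+1)}$ with $\omega \in \boldsymbol{Z}_{1:(k+1)}^{-1}(\boldsymbol{z}_{1:(k+1)})$. The natural candidate to continue with is $\boldsymbol{z}_{1:k} = (z_1, \ldots, z_k)$, i.e., the prefix obtained by dropping the last coordinate.

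Two things need verification. First, set containment: $\omega \in \boldsymbol{Z}_{1:(k+1)}^{-1}(\boldsymbol{z}_{1:(k+1)})$ automatically gives $Z_i(\omega) = z_i$ for $i = 1, \ldots, k$, hence $\omega \in \boldsymbol{Z}_{1:k}^{-1}(\boldsymbol{z}_{1:k})$; equivalently, the event-level inclusion
\[
\boldsymbol{Z}_{1:(k+1)}^{-1}(\boldsymbol{z}_{1:(k+1)}) \subseteq \boldsymbol{Z}_{1:k}^{-1}(\boldsymbol{z}_{1:k})
\]
holds by construction. Second, and more substantively, I need to check that $\boldsymbol{z}_{1:k} \in \mathcal{Z}_{1:k}$, i.e., that the defining overlap condition survives dropping a covariate. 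Intersecting the inclusion above with $X^{-1}(1)$ and applying monotonicity of the probability measure $P$ yields
\[
P\!\left(X^{-1}(1) \cap \boldsymbol{Z}_{1:k}^{-1}(\boldsymbol{z}_{1:k})\right) \;\geq\; P\!\left(X^{-1}(1) \cap \boldsymbol{Z}_{1:(k+1)}^{-1}(\boldsymbol{z}_{1:(k+1)})\right) \;>\; 0,
\]
which transfers the lower bound. Once both the set-theoretic containment and the measure-theoretic inequality are in hand, combining them produces $\boldsymbol{z}_{1:k} \in \mathcal{Z}_{1:k}$ with $\omega \in \boldsymbol{Z}_{1:k}^{-1}(\boldsymbol{z}_{1:k})$, so $\omega \in \Omega^M_{1:k}$.

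The conceptually clean part of the argument is just monotonicity of probability under the set inclusion induced by forgetting the $(k+1)$-st covariate. The main obstacle — really the only subtle point — is propagating the \emph{upper} portion of the two-sided bound in the definition of $\mathcal{Z}_{1:k}$, i.e., ensuring that having untreated individuals with covariate pattern $\boldsymbol{z}_{1:(k+1)}$ still guarantees untreated individuals with the shorter pattern $\boldsymbol{z}_{1:k}$. Under the natural matching interpretation of the defining condition (namely that both $P(X^{-1}(1) \cap \boldsymbol{Z}_{1:k}^{-1}(\boldsymbol{z}_{1:k})) > 0$ and $P(X^{-1}(0) \cap \boldsymbol{Z}_{1:k}^{-1}(\boldsymbol{z}_{1:k})) > 0$ must hold so that a matched pair is realizable), this follows by the identical monotonicity argument applied to $X^{-1}(0)$ in place of $X^{-1}(1)$, which is the step I would write out in full.
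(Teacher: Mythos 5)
Your proof is correct and is essentially the direct (non-contrapositive) form of the paper's own argument: where you truncate a viable pattern $\boldsymbol{z}_{1:(k+1)}$ to its prefix and use monotonicity of $P$ on the inclusion $\boldsymbol{Z}_{1:(k+1)}^{-1}(\boldsymbol{z}_{1:(k+1)}) \subseteq \boldsymbol{Z}_{1:k}^{-1}(\boldsymbol{z}_{1:k})$, the paper instead extends a non-viable pattern and uses additivity to show every extension remains non-viable, which is the same fact read in the opposite direction. You are also right that the argument needs the defining condition of $\mathcal{Z}_{1:k}$ to be read as ``both $P(X^{-1}(1)\cap \boldsymbol{Z}_{1:k}^{-1}(\boldsymbol{z}_{1:k}))>0$ and $P(X^{-1}(0)\cap \boldsymbol{Z}_{1:k}^{-1}(\boldsymbol{z}_{1:k}))>0$,'' which is exactly the interpretation the paper's proof adopts despite the literal $0<\cdot<1$ in the displayed definition.
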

\begin{proof}
If $z_{1:k}^* \in \bar{\mathcal{Z}}_{1:k}$ (the complement of $\mathcal{Z}_{1:k}$), then $(z_{1:k}^*, z_{k + 1}) \in \bar{\mathcal{Z}}_{1:(k + 1)}$ for all $z_{k + 1} \in Z_{k + 1}(\Omega)$. This is because if $z_{1:k}^* \in \bar{\mathcal{Z}}_{1:k}$, then $P(X^{-1}(1) \cap Z_{1:k}^{-1}(z_{1:k}^*)) = 0$ or $P(X^{-1}(0) \cap Z_{1:k}^{-1}(z_{1:k}^*)) = 0$. Suppose, without loss of generality, that $P(X^{-1}(1) \cap Z_{1:k}^{-1}(z_{1:k}^*)) = 0$. Then: 
\begin{align*}
P(X^{-1}(1) \cap Z_{1:k}^{-1}(z_{1:k}^*)) &= P\left( X^{-1}(1) \bigcap \left( \bigcup_{z_{k + 1} \in Z_{k+1}(\Omega)} (Z_{1:k}, Z_{k + 1})^{-1}(z_{1:k}^*, z_{k + 1}) \right) \right) \\
&= P\left( \bigcup_{z_{k + 1} \in Z_{k + 1}(\Omega)} X^{-1}(1) \bigcap (Z_{1:k}, Z_{k + 1})^{-1}(z_{1:k}^*, z_{k + 1}) \right) \\
&= \sum_{z_{k + 1} \in Z_{k + 1}(\Omega)} P(X^{-1}(1) \cap (Z_{1:k}, Z_{k + 1})^{-1}(z_{1:k}^*, z_{k + 1})) \\
&= 0
\end{align*}

\noindent Since the final sum is equal to zero, each term $P(X^{-1}(1) \cap (Z_{1:k}, Z_{k + 1})^{-1}(z_{1:k}^*, z_{k + 1}))$ equals zero. This implies $\bar{\Omega}^M_{1:k} \subseteq \bar{\Omega}^M_{1:(k + 1)}$, which in turn implies $\Omega^M_{1:k} \supseteq \Omega^M_{1:(k + 1)}$, as required. This final implication is a consequence of the equivalence between a conditional and it's corresponding contrapositive.
\end{proof}

Figure \ref{fig:matching} provides intuition for Corollary \ref{thm:omega_M} by visualizing an example of exact paired matching on the square space. The three consecutive panels of Figure \ref{fig:matching} display $\Omega$, $\Omega^M_1$, and $\Omega^M_{1:2}$, respectively, when matching is performed on two finite discrete random variables. The fact that $\Omega \supseteq \Omega^M_1 \supseteq \Omega^M_{1:2}$ can be seen as a consequence of each additional matching variable more finely partitioning $\Omega$ (in other words, $\Omega \subset \Sigma_{Z_1} \subset \Sigma_{(Z_1, Z_2)}$). 

Corollary \ref{thm:omega_M} has two important ramifications for exact paired matching. The first concerns the decreasing size of the matched sample as more matching covariates are included. It has already been mentioned that as more matching covariates are added, the number of distinct combinations may far exceed the size of the sample itself. Corollary \ref{thm:omega_M} makes clear that there is an additional compounding problem. As a simple consequence of Corollary \ref{thm:omega_M}, $P(\Omega^M_{1:k}) \ge P(\Omega^M_{1:(k + 1)})$. Therefore, the space on which viable matched pairs may be found with positive probability reduces in measure as more matching variables are added. The second ramification, closely related to the first, involves an inherent bias. The subset of $\Omega$ ignored entirely under exact paired matching, $\Omega^M_{1:k}$ increases in size as more matching variables are included. If $Y_{X = 0}$ and $Y_{X = 1}$ differ on $\bar{\Omega}^M_{1:k}$ but are identical on $\Omega^M_{1:k}$, any downstream analysis of causality relying the matched sample will miss this causal relationship. Even in the infinite sample limit, this second problem persists. 

Although exact paired matching seeks to emulate the behavior of experimental randomization by creating a dataset in which covariates are independent from treatment, a comparison between Figure \ref{fig:randomization} and Figure \ref{fig:matching} clearly illustrates how different these two procedures are. While experimental randomization allows one to observe samples of the potential outcomes $Y_{X = 0}$ and $Y_{X = 1}$ across the entire original sample space $\Omega$, matching may exclude information from virtually all of $\Omega$. Experimental randomization ensures that the difference in sample means will converge to the true $\text{ACE}$ in the large sample limit. The quantity produced by exact paired matching, $\widehat{\text{ACE}}_M$, has a far more opaque interpretation. Without additional assumptions, it need not converge to any causal quantity of interest. 

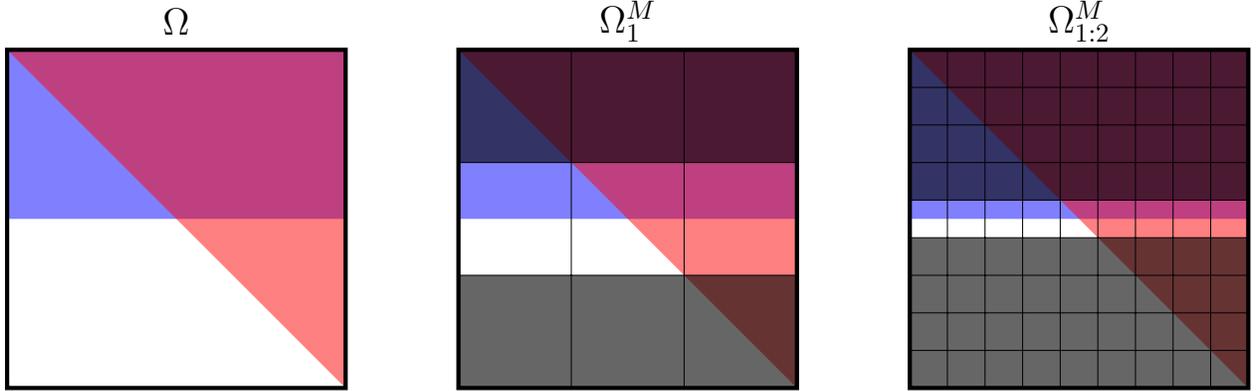
\begin{figure}
\begin{center}
\begin{tikzpicture}[scale = .75]

\coordinate (CO) at (0, 0);
\coordinate (COS) at (8, 0);
\coordinate (COSS) at (16, 0);

\coordinate (nw) at (-3, 3);
\coordinate (ne) at (3, 3);
\coordinate (se) at (3, -3);
\coordinate (sw) at (-3, -3);


\fill[blue, opacity = 0.5] ($(CO) + (nw)$) -- ($(CO) + (ne)$) -- ($(CO) + (3, 0)$) -- ($(CO) + (-3, 0)$) -- cycle;
\fill[red, opacity = 0.5] ($(CO) + (ne)$) -- ($(CO) + (se)$) -- ($(CO) + (nw)$) -- cycle;

\draw[ultra thick] ( $(CO) + (nw)$ ) -- ( $(CO) + (ne)$ ) -- ( $(CO) + (se)$ ) -- ( $(CO) +  (sw)$ ) -- cycle;
\draw[shift = {( $(CO) + (sw)$ )}, step = 10cm, black, very thin] (0, 0) grid ( $(CO) + (3, 3)$ );
\draw ( $(CO) + (0, 3.5)$ ) node {\large $\Omega$};

\fill[blue, opacity = 0.5] ($(COS) + (nw)$) -- ($(COS) + (ne)$) -- ($(COS) + (3, 0)$) -- ($(COS) + (-3, 0)$) -- cycle;
\fill[red, opacity = 0.5] ($(COS) + (ne)$) -- ($(COS) + (se)$) -- ($(COS) + (nw)$) -- cycle;
\fill[black, opacity = 0.6] ($(COS) + (nw)$) -- ($(COS) + (ne)$) -- ($(COS) + (3, 1)$) -- ($(COS) + (-3, 1)$) -- cycle;
\fill[black, opacity = 0.6] ($(COS) + (sw)$) -- ($(COS) + (se)$) -- ($(COS) + (3, -1)$) -- ($(COS) + (-3, -1)$) -- cycle;

\draw[ultra thick] ( $(COS) + (nw)$ ) -- ( $(COS) + (ne)$ ) -- ( $(COS) + (se)$ ) -- ( $(COS) +  (sw)$ ) -- cycle;
\draw[shift = {( $(COS) + (sw)$ )}, step = 2cm, black, very thin] (0, 0) grid ( $(COS) + (3, 3)$ );
\draw ( $(COS) + (0, 3.5)$ ) node {\large $\Omega^M_1$};

\fill[blue, opacity = 0.5] ($(COSS) + (nw)$) -- ($(COSS) + (ne)$) -- ($(COSS) + (3, 0)$) -- ($(COSS) + (-3, 0)$) -- cycle;
\fill[red, opacity = 0.5] ($(COSS) + (ne)$) -- ($(COSS) + (se)$) -- ($(COSS) + (nw)$) -- cycle;
\fill[black, opacity = 0.6] ($(COSS) + (nw)$) -- ($(COSS) + (ne)$) -- ($(COSS) + (3, .3333333)$) -- ($(COSS) + (-3, .3333333)$) -- cycle;
\fill[black, opacity = 0.6] ($(COSS) + (sw)$) -- ($(COSS) + (se)$) -- ($(COSS) + (3, -.3333333)$) -- ($(COSS) + (-3, -.3333333)$) -- cycle;

\draw[ultra thick] ( $(COSS) + (nw)$ ) -- ( $(COSS) + (ne)$ ) -- ( $(COSS) + (se)$ ) -- ( $(COSS) +  (sw)$ ) -- cycle;
\draw[shift = {( $(COSS) + (sw)$ )}, step = .66666666cm, black, very thin] (0, 0) grid ( $(COSS) + (3, 3)$ );
\draw ( $(COSS) + (0, 3.5)$ ) node {\large $\Omega^M_{1:2}$};

\end{tikzpicture}
\end{center}
\caption{(left) The observable random variables $X$ and $Y$ as in Figure \ref{fig:mult}. (middle) The atoms defined by the matching variable $Z_1$, delineated by a $3 \times 3$ grid, partition $\Omega$.  All but $\Omega^M_1$ is occluded. (right) The atoms defined by the matching variables $Z_1$ and $Z_2$, delineated by a $9 \times 9$ grid, partition $\Omega$. All but $\Omega^M_{1:2}$ is occluded.}
\label{fig:matching}
\end{figure}

\section{A general framework for causal systems}\label{sec:generalization}

In Sections \ref{sec:two_vars} and \ref{sec:three_vars}, we carefully examined systems of two or three observable random variables in order to build intuition and demonstrate important features of causality from a measure theoretic perspective. In the present section, we attempt to move beyond these simple (though instructive) systems by providing an axiomatic framework for a general model of causality, which we term an observable causal system (OCS). An OCS builds on the simple models discussed above in two ways. First, we will consider collections of arbitrarily many finite discrete random variables. Second, we allow all random variables to be causal for each other. In providing a more general model, our axiomatic framework has the additional benefit of being amenable to a more formal description of causality. We therefore examine some of the immediate corollaries of our set of causal axioms, and describe how the basic structure of causality emerges as corollaries to the axioms.

\subsection{A comment on notation}
In the present section, we will consider systems of $n$ observable random variables. Since subscripts are reserved for potential outcomes, superscripts will be used to index each of these random variables and should not be confused with exponentiation. Thus, we will denote a set of $n$ random variables as $\{X^{1}, X^{2}, \hdots, X^{n}\}$. 

We will often need to refer to subsets of the random variables $\{X^{1}, X^{2}, \hdots, X^{n}\}$. We denote $\mathcal{S} = \{s_1, s_2, \hdots, s_k\}$ to be an arbitrary subset of the integers $\{1, 2, \hdots, n\}$ and $\bar{\mathcal{S}}$ to be the complement of $\mathcal{S}$ ($\mathcal{S} \cup \bar{\mathcal{S}} = \{1, 2, \hdots, n\}$). We refer to the the multivariate random variable $(X^{s_1}, X^{s_2}, \hdots, X^{s_k})$ as $\boldsymbol{X}^{\mathcal{S}}$. Then,  $\boldsymbol{X}^d_{\boldsymbol{X}^{\mathcal{S}} = \boldsymbol{x}^{\mathcal{S}}}$ denotes the potential outcome 
\begin{align*}
X^{d}_{(X^{s_1}, X^{s_2}, \hdots, X^{s_k}) = (x^{s_1}, x^{s_2}, \hdots, x^{s_k})}
\end{align*} 
\noindent For compactness, sometimes we will use the shorthand $
\boldsymbol{X}^d_{\boldsymbol{x}^{\mathcal{S}}} = \boldsymbol{X}^d_{\boldsymbol{X}^{\mathcal{S}} = \boldsymbol{x}^{\mathcal{S}}}$ when there is no possibility of confusion. As an example, for $n = 4$, $d = 3$, $\mathcal{S} = \{1, 2, 4\}$, and $\boldsymbol{x}^{\mathcal{S}} = (0, 1, 1)$, 
\begin{align*}
X^{d}_{\boldsymbol{x}^{\mathcal{S}}} = X^{d}_{\boldsymbol{X}^{\mathcal{S}} = \boldsymbol{x}^{\mathcal{S}}} = X^{3}_{(X^{1}, X^{2}, X^{4}) = (0, 1, 1)}
\end{align*}
\noindent In words, this potential outcome is ``$X^3$ had $X^1$ been 0, $X^2$ been 1, and $X^4$ been 1."

\subsection{Definition of an observable causal system}\label{sec:axioms}

Let $\boldsymbol{X}$ denote the multivariate random variable $(X^{1}, X^{2}, \hdots, X^{n})$ and $X^{i}(\Omega)$ the image of the random variable $X^{i}$. Assuming that each $X^i$ is  a discrete finite random variable, then $\boldsymbol{X}(\Omega) \equiv X^{1}(\Omega) \times X^{2}(\Omega) \times \hdots \times X^{n}(\Omega)$ is a finite set. We do not consider the setting of continuous random variables in this work. For each $\boldsymbol{x} \in \boldsymbol{X}(\Omega)$, we define the indicator random variable
\begin{align*}
   I_{\boldsymbol{X} = \boldsymbol{x}}(\omega) = \left\{
     \begin{array}{lr}
       1 & : \boldsymbol{X}(\omega) = \boldsymbol{x}\\
       0 & : \boldsymbol{X}(\omega) \ne \boldsymbol{x}
     \end{array}
   \right.
\end{align*}
\begin{defin}[Observable causal system]\label{def:axioms}
A set of random variables $\{ X^{1}, X^{2}, \hdots X^{n} \}$ defined on the probability space $(\Omega, \mathcal{F}, P)$ is an \textbf{\textit{observable causal system}} if the following properties hold:

\begin{enumerate}

\item\label{axiom:complete} \textbf{\textit{Existence of potential outcomes}}: For all $\boldsymbol{x} \in \boldsymbol{X}(\Omega)$, there exists a random variable $X^{i}_{\boldsymbol{X} = \boldsymbol{x}}$ for each $i \in \{1, 2, \hdots, n\}$. These random variables are called \textbf{\textit{complete potential outcomes}}. 

\item\label{axiom:consistency} \textbf{\textit{Observational Consistency}}: The indicators partially determine the complete potential outcomes. Specifically, 
\begin{align*}
\left( I_{\boldsymbol{X} = \boldsymbol{x}}(\omega) = 1 \right) \implies \left( \boldsymbol{X}_{\boldsymbol{X} = \boldsymbol{x}}(\omega) = \boldsymbol{x} \right)
\end{align*}
\noindent We will say that a random variable is \textbf{\textit{identified at $\omega$}} if its value is determined by observational consistency.

\item\label{axiom:partial} \textbf{\textit{Partial consistency}}: For a subset $\mathcal{S} \subseteq \{1, 2, \hdots, n\}$ and $\bar{\mathcal{S}}$ denoting the complement of $\mathcal{S}$, the indicators and potential outcomes may be derived according to the following generalized \textbf{\textit{contraction}} procedures:
\begin{align*}
I_{\boldsymbol{X}^{\mathcal{S}} = \boldsymbol{x}^{\mathcal{S}}}(\omega) &\equiv \sum_{\boldsymbol{x}^{\bar{\mathcal{S}}}} I_{\boldsymbol{X} = \boldsymbol{x}}(\omega) \\
X^{i}_{\boldsymbol{X}^{\mathcal{S}} = \boldsymbol{x}^{\mathcal{S}}}(\omega) &\equiv \sum_{\boldsymbol{x}^{\bar{\mathcal{S}}}} I_{\boldsymbol{X}^{\bar{\mathcal{S}}} = \boldsymbol{x}^{\bar{\mathcal{S}}}}(\omega)X^{i}_{\boldsymbol{X} = \boldsymbol{x}}(\omega) 
\end{align*}
\noindent If $\mathcal{S} \ne \{1, 2, \hdots, n\}$, then $X^{i}_{\boldsymbol{X}^{\mathcal{S}} = \boldsymbol{x}^{\mathcal{S}}}$ is called a \textbf{partial potential outcome}. We say that the set of partial potential outcomes $\{X_{\boldsymbol{X}^{\mathcal{S}} = \boldsymbol{x}^{\mathcal{S}}} \}$ are derived from the complete potential outcomes $\{X_{\boldsymbol{X} = \boldsymbol{x}}\}$ by \textbf{\textit{contracting over $\bar{\mathcal{S}}$}}.

\end{enumerate}
\end{defin}

Definition \ref{def:axioms} formalizes the essential features of causality discussed in Sections \ref{sec:two_vars} and \ref{sec:three_vars}. Axiom \ref{axiom:complete} ensures that all conceivable potential outcomes exist while Axioms \ref{axiom:consistency} and \ref{axiom:partial} constrict how the potential outcomes are related to each other and observable random variables through contraction. As previously mentioned, an OCS generalizes the simple causal systems studied in Sections \ref{sec:two_vars} and \ref{sec:three_vars} in two ways. First, an OCS allows systems of arbitrarily many (rather than just two or three) finite discrete (rather than just binary) random variables. Second, an OCS allows all observable random variables to be causal for one another. 

The first generalization is conceptually straightforward; largely, it is a matter of extending notation. The second generalization is more fundamental. By allowing potentially all random variables to be causal for each other, we need to consider several new types of potential outcomes. In the simple system of two binary observable random variables $X$ and $Y$, we previously only considered the potential outcomes $Y_{X = x}$. Now we also consider the potential outcomes $X_{Y = y}$. Just as $X$ can be causal for $Y$ if $Y_{X = 0} \ne Y_{X = 1}$ on some subset of nonzero measure, so too can $Y$ be causal for $X$ if $X_{Y = 0} \ne X_{Y = 1}$ on some subset of nonzero measure. This allows for the possibility of feedback in an OCS; something which is excluded in DAG models. 

An additional, minor subtlety introduced by Definition \ref{def:axioms} is that of self-referential indices. For example, in an observable causal system of two binary random variables $X$ and $Y$, we assume not only the existence of potential outcomes $Y_{X = 0}$ and $Y_{X = 1}$, but also the complete potential outcomes $Y_{(X, Y) = (0, 0)}$, $Y_{(X, Y) = (0, 1)}$, $Y_{(X, Y) = (1, 0)}$, and $Y_{(X, Y) = (1, 1)}$. Although self-referential indices have no intuitive interpretation, we decide to include them for the sake of notational compactness.  With self-referential indices, the complete potential outcomes for each observable random variable, $\{X^{i}_{\boldsymbol{x}} \}$, take on the same sets of indices for each $i$. Another benefit is that self-referential indices simplify the statement of the observational consistency axiom. Again for two binary random variables, observational consistency simply states that when $X = x$ and $Y = y$, the complete potential outcomes are as expected: $X_{(X, Y) = (x, y)} = x$ and $Y_{(X, Y) = (x, y)} = y$. In practice, we will only consider potential outcomes in which the self-referential index has been contracted out.

\subsection{Generalized definition of causality}
The definition of an OCS provided above suggests the following definition of causality, which generalizes Definition \ref{def:causal_definition}. 

\begin{defin}[Generalized definition of causality]\label{def:general_causality}
A set of variables $\boldsymbol{X}^{\mathcal{S}}$ is causal for a random variable $X^i$ (denoted $\boldsymbol{X}^{\mathcal{S}} \to X^i$) if $X^i_{\boldsymbol{X}^{\mathcal{S}} = \boldsymbol{x}^{\mathcal{S}}} \ne X^i_{\boldsymbol{X}^{\mathcal{S}} = \boldsymbol{\tilde{x}}^{\mathcal{S}}}$ for $\boldsymbol{x}^{\mathcal{S}} \ne \boldsymbol{\tilde{x}}^{\mathcal{S}}$ on a subset $F \in \mathcal{F}$ of nonzero measure.
\end{defin}

\noindent  When $\boldsymbol{X}^\mathcal{S}$ consists of a single binary random variable, Definition \ref{def:general_causality} recapitulates Definition \ref{def:causal_definition}. When $\boldsymbol{X}^{\mathcal{S}}$ consists of multiple random variables, Definition \ref{def:general_causality} describes the joint causality scenario detailed in Section \ref{sec:joint_causality}. One notable feature of Definition \ref{def:general_causality} is that it does not preclude the possibility of feedback. For instance, $X^i \to X^j$ and $X^j \to X^i$ may both be true. As we will see in Section \ref{sec:corollaries} below, a general version of the fundamental problem of causal inference precludes possibility of identifying causal relationships from observable data alone. 

\  

\noindent \textbf{Remark:} Although Definition \ref{def:general_causality} does not specifically exclude the possibility that $i \in \mathcal{S}$, it is only of practical interest when $i \centernot\in \mathcal{S}$. 

\subsection{A detailed example}\label{sec:detailed_example}

In this section we briefly reexamine our example of two binary random variables, introduced in Figure \ref{fig:mult} and discussed in Section \ref{sec:two_vars}, to better understand the structure of an OCS and build intuition for some of the new concepts introduced by Definition \ref{def:axioms}. Axiom \ref{axiom:complete} requires the existence of all of the following complete potential outcomes:
\begin{align*}
\{ &X_{(X, Y) = (0, 0)}, X_{(X, Y) = (0, 1)}, X_{(X, Y) = (1, 0)}, X_{(X, Y) = (1, 1)},\\ &Y_{(X, Y) = (0, 0)}, Y_{(X, Y) = (0, 1)}, Y_{(X, Y) = (1, 0)}, Y_{(X, Y) = (1, 1)} \} 
\end{align*}
\noindent Axiom \ref{axiom:consistency} ensures that the complete potential outcomes are consistent with observations. As a consequence, the complete potential are determined in some regions of $\Omega$ and undetermined in others. We represent the implications of Axiom \ref{axiom:consistency} in Figure \ref{fig:completes}. Axiom \ref{axiom:partial} allows us to derive the partial potential outcome through contraction. In addition to the familiar partial potential outcomes $Y_{X = x}$, we can also derive the partial potential outcomes $Y_{Y = y}$. In contracting over an additional index, the partial potential outcomes are identified over a larger portion of $\Omega$ than the complete potential outcomes. The implications of Axiom \ref{axiom:partial} are represented in Figure \ref{fig:partial}. As a sanity check, one should also verify that the fully contracted random variables, obtained by contracting the partial potential outcomes in Figure \ref{fig:partial} over the single remaining index, recovers the original observable random variables from Figure \ref{fig:mult}. This feature of the OCM will be proved generally in Section \ref{sec:corollaries}. 

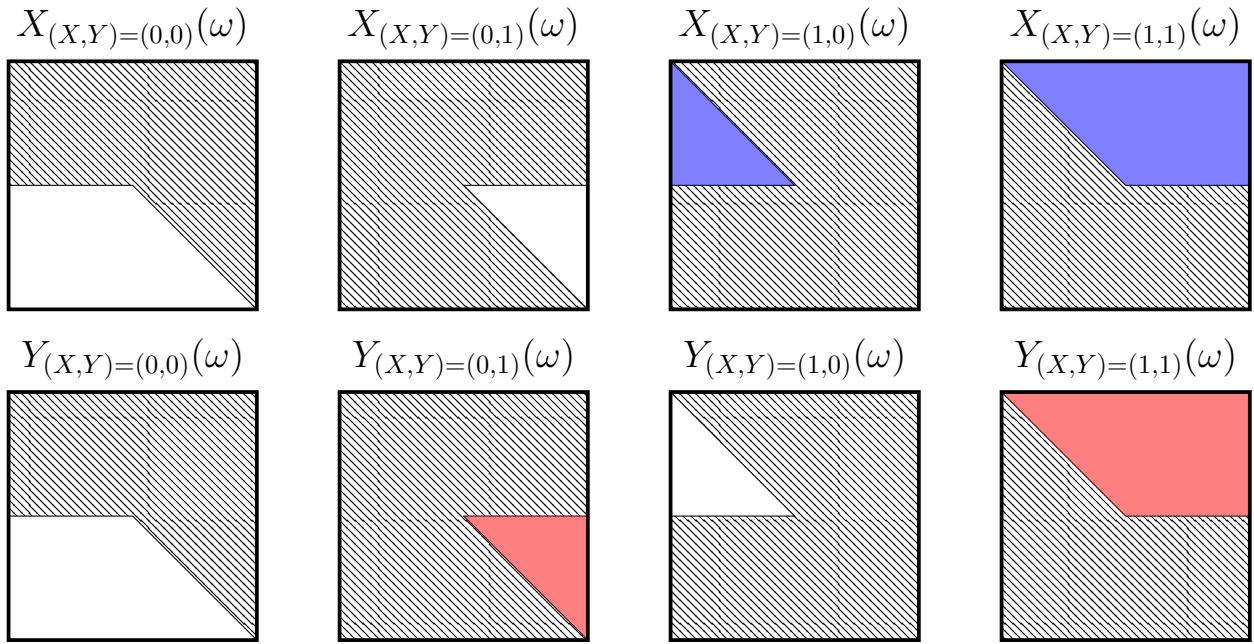
\begin{figure}
\begin{center}
\begin{tikzpicture}[scale = 0.55]

\coordinate (CX00) at (-11, 3);
\coordinate (CX01) at (-3, 3);
\coordinate (CX10) at (5, 3);
\coordinate (CX11) at (13, 3);

\coordinate (CY00) at (-11, -5);
\coordinate (CY01) at (-3, -5);
\coordinate (CY10) at (5, -5);
\coordinate (CY11) at (13, -5);

\coordinate (nw) at (-3, 3);
\coordinate (ne) at (3, 3);
\coordinate (se) at (3, -3);
\coordinate (sw) at (-3, -3);

\draw[pattern=north west lines, pattern color=black] ( $(CX00) + (-3, 0)$ ) -- ( $(CX00) + (nw)$ ) -- ( $(CX00) +  (ne)$ ) -- ( $(CX00) +  (se)$ ) -- ( $(CX00)$ ) -- cycle;
\draw ( $(CX00) + (0, 3.8)$ ) node {\large $X_{(X, Y) = (0, 0)}(\omega)$};
\draw[line width = 0.5mm] ( $(CX00) + (nw)$ ) -- ( $(CX00) + (ne)$ ) -- ( $(CX00) + (se)$ ) -- ( $(CX00) +  (sw)$ ) -- cycle;

\draw[pattern=north west lines, pattern color=black] ( $(CX01) + (nw)$ ) -- ( $(CX01) + (ne)$ ) -- ( $(CX01) + (3, 0)$ ) -- ( $(CX01)$ ) -- ( $(CX01) +  (se)$ ) -- ( $(CX01) +  (sw)$ ) -- cycle;
\draw ( $(CX01) + (0,3.8)$ ) node {\large $X_{(X, Y) = (0, 1)}(\omega)$};
\draw[line width = 0.5mm] ( $(CX01) + (nw)$ ) -- ( $(CX01) + (ne)$ ) -- ( $(CX01) + (se)$ ) -- ( $(CX01) +  (sw)$ ) -- cycle;

\draw[pattern=north west lines, pattern color=black] ( $(CX10) + (nw)$ ) -- ( $(CX10) + (ne)$ ) -- ( $(CX10) + (se)$ ) -- ( $(CX10) +  (sw)$ ) -- ( $(CX10) + (-3, 0)$ ) -- ( $(CX10)$ ) -- cycle;
\fill[blue, opacity = 0.5] ( $(CX10)$ ) -- ( $(CX10) + (-3, 0)$ ) -- ( $(CX10) + (nw)$ ) -- cycle;
\draw ( $(CX10) + (0, 3.8)$ ) node {\large $X_{(X, Y) = (1, 0)}(\omega)$};
\draw[line width = 0.5mm] ( $(CX10) + (nw)$ ) -- ( $(CX10) + (ne)$ ) -- ( $(CX10) + (se)$ ) -- ( $(CX10) +  (sw)$ ) -- cycle;

\draw[pattern=north west lines, pattern color=black] ( $(CX11) + (nw)$ ) -- ( $(CX11)$ ) -- ( $(CX11) + (3, 0)$ ) -- ( $(CX11) + (se)$ ) -- ( $(CX11) +  (sw)$ ) -- cycle;
\fill[blue, opacity = 0.5] ( $(CX11)$ ) -- ( $(CX11) + (3, 0)$ ) -- ( $(CX11) + (ne)$ ) -- ( $(CX11) + (nw)$ ) -- cycle;
\draw ( $(CX11) + (0, 3.8)$ ) node {\large $X_{(X, Y) = (1, 1)}(\omega)$};
\draw[line width = 0.5mm] ( $(CX11) + (nw)$ ) -- ( $(CX11) + (ne)$ ) -- ( $(CX11) + (se)$ ) -- ( $(CX11) +  (sw)$ ) -- cycle;

\draw[pattern=north west lines, pattern color=black] ( $(CY00) + (-3, 0)$ ) -- ( $(CY00) + (nw)$ ) -- ( $(CY00) +  (ne)$ ) -- ( $(CY00) +  (se)$ ) -- ( $(CY00)$ ) -- cycle;
\draw ( $(CY00) + (0, 3.8)$ ) node {\large $Y_{(X, Y) = (0, 0)}(\omega)$};
\draw[line width = 0.5mm] ( $(CY00) + (nw)$ ) -- ( $(CY00) + (ne)$ ) -- ( $(CY00) + (se)$ ) -- ( $(CY00) +  (sw)$ ) -- cycle;

\draw[pattern=north west lines, pattern color=black] ( $(CY01) + (nw)$ ) -- ( $(CY01) + (ne)$ ) -- ( $(CY01) + (3, 0)$ ) -- ( $(CY01)$ ) -- ( $(CY01) +  (se)$ ) -- ( $(CY01) +  (sw)$ ) -- cycle;
\fill[red, opacity = 0.5] ( $(CY01)$ )  -- ( $(CY01) + (3, 0)$ ) -- ( $(CY01) + (se)$ ) -- cycle;
\draw ( $(CY01) + (0, 3.8)$ ) node {\large $Y_{(X, Y) = (0, 1)}(\omega)$};
\draw[line width = 0.5mm] ( $(CY01) + (nw)$ ) -- ( $(CY01) + (ne)$ ) -- ( $(CY01) + (se)$ ) -- ( $(CY01) +  (sw)$ ) -- cycle;

\draw[pattern=north west lines, pattern color=black] ( $(CY10) + (nw)$ ) -- ( $(CY10) + (ne)$ ) -- ( $(CY10) + (se)$ ) -- ( $(CY10) +  (sw)$ ) -- ( $(CY10) + (-3, 0)$ ) -- ( $(CY10)$ ) -- cycle;
\draw ( $(CY10) + (0, 3.8)$ ) node {\large $Y_{(X, Y) = (1, 0)}(\omega)$};
\draw[line width = 0.5mm] ( $(CY10) + (nw)$ ) -- ( $(CY10) + (ne)$ ) -- ( $(CY10) + (se)$ ) -- ( $(CY10) +  (sw)$ ) -- cycle;

\draw[pattern=north west lines, pattern color=black] ( $(CY11) + (nw)$ ) -- ( $(CY11)$ ) -- ( $(CY11) + (3, 0)$ ) -- ( $(CY11) + (se)$ ) -- ( $(CY11) +  (sw)$ ) -- cycle;
\fill[red, opacity = 0.5] ( $(CY11)$ ) -- ( $(CY11) + (3, 0)$ ) -- ( $(CY11) + (ne)$ ) -- ( $(CY11) + (nw)$ ) -- cycle;
\draw ( $(CY11) + (0, 3.8)$ ) node {\large $Y_{(X, Y) = (1, 1)}(\omega)$};
\draw[line width = 0.5mm] ( $(CY11) + (nw)$ ) -- ( $(CY11) + (ne)$ ) -- ( $(CY11) + (se)$ ) -- ( $(CY11) +  (sw)$ ) -- cycle;

\end{tikzpicture}
\end{center}
\caption{Visual representation of Axioms \ref{axiom:complete} and \ref{axiom:consistency}. The complete potential outcomes $X_{(X, Y) = (x, y)}$ and $Y_{(X, Y) = (x, y)}$. Within the shaded (unshaded) regions of $\Omega$, the random variable maps to 1 (0). In the regions filled by diagonal lines, the complete potential outcomes are not determined by observational consistency (i.e., where they are not identified).}\label{fig:completes}
\end{figure}

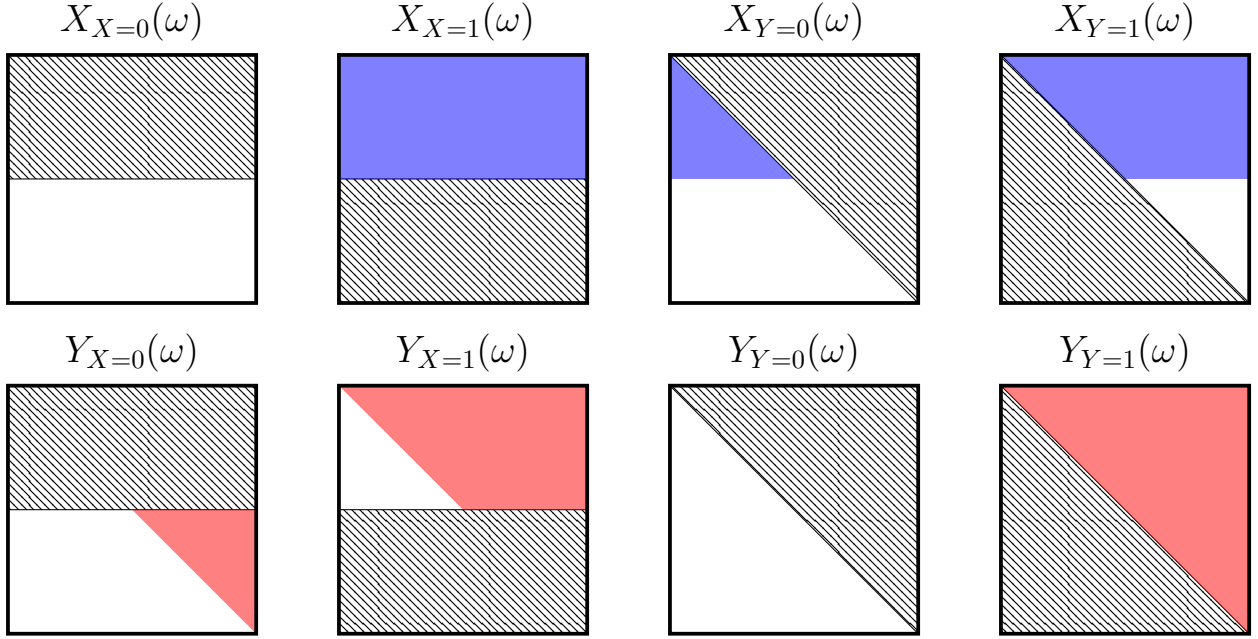
\begin{figure}
\begin{center}
\begin{tikzpicture}[scale = 0.55]

\coordinate (CXX0) at (-11, 3);
\coordinate (CXX1) at (-3, 3);
\coordinate (CXY0) at (5, 3);
\coordinate (CXY1) at (13, 3);

\coordinate (CYX0) at (-11, -5);
\coordinate (CYX1) at (-3, -5);
\coordinate (CYY0) at (5, -5);
\coordinate (CYY1) at (13, -5);

\coordinate (nw) at (-3, 3);
\coordinate (ne) at (3, 3);
\coordinate (se) at (3, -3);
\coordinate (sw) at (-3, -3);

\draw[pattern=north west lines, pattern color=black] ( $(CXX0) + (-3, 0)$ ) -- ( $(CXX0) + (3, 0)$ ) -- ( $(CXX0) +  (ne)$ ) -- ( $(CXX0) +  (nw)$ ) -- cycle;
\draw ( $(CXX0) + (0, 3.8)$ ) node {\large $X_{X = 0}(\omega)$};
\draw[line width = 0.5mm] ( $(CXX0) + (nw)$ ) -- ( $(CXX0) + (ne)$ ) -- ( $(CXX0) + (se)$ ) -- ( $(CXX0) +  (sw)$ ) -- cycle;

\draw[pattern=north west lines, pattern color=black] ( $(CXX1) + (-3, 0)$ ) -- ( $(CXX1) + (3, 0)$ ) -- ( $(CXX1) + (se)$ ) -- ( $(CXX1) +  (sw)$ ) -- cycle;
\fill[blue, opacity = 0.5] ( $(CXX1) + (-3, 0)$ ) -- ( $(CXX1) + (3, 0)$ ) -- ( $(CXX1) + (ne)$ ) -- ( $(CXX1) + (nw)$ ) -- cycle;
\draw ( $(CXX1) + (0,3.8)$ ) node {\large $X_{X = 1}(\omega)$};
\draw[line width = 0.5mm] ( $(CXX1) + (nw)$ ) -- ( $(CXX1) + (ne)$ ) -- ( $(CXX1) + (se)$ ) -- ( $(CXX1) +  (sw)$ ) -- cycle;

\draw[pattern=north west lines, pattern color=black] ( $(CXY0) + (nw)$ ) -- ( $(CXY0) + (ne)$ ) -- ( $(CXY0) + (se)$ ) -- cycle;
\fill[blue, opacity = 0.5] ( $(CXY0)$ ) -- ( $(CXY0) + (-3, 0)$ ) -- ( $(CXY0) + (nw)$ ) -- cycle;
\draw ( $(CXY0) + (0, 3.8)$ ) node {\large $X_{Y = 0}(\omega)$};
\draw[line width = 0.5mm] ( $(CXY0) + (nw)$ ) -- ( $(CXY0) + (ne)$ ) -- ( $(CXY0) + (se)$ ) -- ( $(CXY0) +  (sw)$ ) -- cycle;

\draw[pattern=north west lines, pattern color=black] ( $(CXY1) + (nw)$ ) -- ( $(CXY1) + (sw)$ ) -- ( $(CXY1) + (se)$ ) -- cycle;
\fill[blue, opacity = 0.5] ( $(CXY1)$ ) -- ( $(CXY1) + (3, 0)$ ) -- ( $(CXY1) + (ne)$ ) -- ( $(CXY1) + (nw)$ ) -- cycle;
\draw ( $(CXY1) + (0, 3.8)$ ) node {\large $X_{Y = 1}(\omega)$};
\draw[line width = 0.5mm] ( $(CXY1) + (nw)$ ) -- ( $(CXY1) + (ne)$ ) -- ( $(CXY1) + (se)$ ) -- ( $(CXY1) +  (sw)$ ) -- cycle;

\draw[pattern=north west lines, pattern color=black] ( $(CYX0) + (-3, 0)$ ) -- ( $(CYX0) + (3, 0)$ ) -- ( $(CYX0) +  (ne)$ ) -- ( $(CYX0) +  (nw)$ ) -- cycle;
\fill[red, opacity = 0.5] ( $(CYX0)$ )  -- ( $(CYX0) + (3, 0)$ ) -- ( $(CYX0) + (se)$ ) -- cycle;
\draw ( $(CYX0) + (0, 3.8)$ ) node {\large $Y_{X = 0}(\omega)$};
\draw[line width = 0.5mm] ( $(CYX0) + (nw)$ ) -- ( $(CYX0) + (ne)$ ) -- ( $(CYX0) + (se)$ ) -- ( $(CYX0) +  (sw)$ ) -- cycle;

\draw[pattern=north west lines, pattern color=black] ( $(CYX1) + (-3, 0)$ ) -- ( $(CYX1) + (3, 0)$ ) -- ( $(CYX1) + (se)$ ) -- ( $(CYX1) +  (sw)$ ) -- cycle;
\fill[red, opacity = 0.5] ( $(CYX1)$ )  -- ( $(CYX1) + (3, 0)$ ) -- ( $(CYX1) + (ne)$ ) -- ( $(CYX1) + (nw)$ ) -- cycle;
\draw ( $(CYX1) + (0, 3.8)$ ) node {\large $Y_{X = 1}(\omega)$};
\draw[line width = 0.5mm] ( $(CYX1) + (nw)$ ) -- ( $(CYX1) + (ne)$ ) -- ( $(CYX1) + (se)$ ) -- ( $(CYX1) +  (sw)$ ) -- cycle;

\draw[pattern=north west lines, pattern color=black] ( $(CYY0) + (nw)$ ) -- ( $(CYY0) + (ne)$ ) -- ( $(CYY0) + (se)$ ) -- cycle;
\draw ( $(CYY0) + (0, 3.8)$ ) node {\large $Y_{Y = 0}(\omega)$};
\draw[line width = 0.5mm] ( $(CYY0) + (nw)$ ) -- ( $(CYY0) + (ne)$ ) -- ( $(CYY0) + (se)$ ) -- ( $(CYY0) +  (sw)$ ) -- cycle;

\draw[pattern=north west lines, pattern color=black] ( $(CYY1) + (nw)$ ) -- ( $(CYY1) + (sw)$ ) -- ( $(CYY1) + (se)$ ) -- cycle;
\fill[red, opacity = 0.5] ( $(CYY1) + (nw)$ ) -- ( $(CYY1) + (ne)$ ) -- ( $(CYY1) + (se)$ ) -- cycle;
\draw ( $(CYY1) + (0, 3.8)$ ) node {\large $Y_{Y = 1}(\omega)$};
\draw[line width = 0.5mm] ( $(CYY1) + (nw)$ ) -- ( $(CYY1) + (ne)$ ) -- ( $(CYY1) + (se)$ ) -- ( $(CYY1) +  (sw)$ ) -- cycle;

\end{tikzpicture}
\end{center}
\caption{Visual representation of Axiom \ref{axiom:partial}. The partial potential outcomes $X_{X = x}$, $X_{Y = y}$, $Y_{X = x}$, and $Y_{Y = y}$.Within the shaded (unshaded) regions of $\Omega$, the random variable maps to 1 (0). In the regions filled by diagonal lines, the complete potential outcomes are not identified. }\label{fig:partial}
\end{figure}

\subsection{Corollaries of causal axioms}\label{sec:corollaries}

In this section, we provide some immediate corollaries of the axioms presented in Section \ref{sec:axioms}, thereby making modest steps towards a formal theory of observable causal models. Corollaries \ref{cor:fully_marginalized} and \ref{cor:mult_contractions} describe some basic features of the structure of OCM. More specifically, the describe how observable random variables, partial potential outcomes, and complete potential outcomes relate through the operation of contraction. Corollary \ref{cor:fundamental_problem} is a generalization of the so-called \textit{fundamental problem of causal inference}. 

The operation of contraction allows us to derive partial potential outcomes $\{X_{\boldsymbol{X}^{\mathcal{S}} = \boldsymbol{x}^{\mathcal{S}}}\}$ from the complete potential outcomes $\{X_{\boldsymbol{X} = \boldsymbol{x}}\}$ for any subset $\mathcal{S} \subseteq \{1, 2, \hdots, n \}$. This does not exclude $\mathcal{S} = \emptyset$, in which case we are marginalizing over the full set indices since $\bar{S} = \{1, 2, \hdots, n\}$. The corollary below shows that the observational consistency axiom (Axiom \ref{axiom:consistency}) ensures the expected result.

\begin{cor}[Observables are fully contracted potential outcomes]\label{cor:fully_marginalized}
\begin{align*}
X^{i} = X^{i}_{\emptyset}
\end{align*}
\end{cor}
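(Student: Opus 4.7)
The plan is to apply Axiom \ref{axiom:partial} directly with $\mathcal{S} = \emptyset$ and then invoke Axiom \ref{axiom:consistency} to collapse the resulting sum pointwise. Specifically, with $\mathcal{S} = \emptyset$ we have $\bar{\mathcal{S}} = \{1,2,\ldots,n\}$, so the contraction formula reads
\begin{align*}
X^{i}_{\emptyset}(\omega) = \sum_{\boldsymbol{x} \in \boldsymbol{X}(\Omega)} I_{\boldsymbol{X} = \boldsymbol{x}}(\omega) \, X^{i}_{\boldsymbol{X} = \boldsymbol{x}}(\omega).
\end{align*}

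Next, I would fix an arbitrary $\omega \in \Omega$ and observe that $\boldsymbol{X}(\omega)$ takes a single definite value, call it $\boldsymbol{x}^{*} = \boldsymbol{X}(\omega)$. By the defining property of the indicators, $I_{\boldsymbol{X} = \boldsymbol{x}}(\omega) = 1$ iff $\boldsymbol{x} = \boldsymbol{x}^{*}$ and $0$ otherwise, so all but one term in the sum vanishes and the expression reduces to $X^{i}_{\boldsymbol{X} = \boldsymbol{x}^{*}}(\omega)$.

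The final step is to apply observational consistency. Since $I_{\boldsymbol{X} = \boldsymbol{x}^{*}}(\omega) = 1$, Axiom \ref{axiom:consistency} gives $\boldsymbol{X}_{\boldsymbol{X} = \boldsymbol{x}^{*}}(\omega) = \boldsymbol{x}^{*}$, and in particular the $i$-th coordinate satisfies $X^{i}_{\boldsymbol{X} = \boldsymbol{x}^{*}}(\omega) = x^{*,i} = X^{i}(\omega)$. Chaining these equalities yields $X^{i}_{\emptyset}(\omega) = X^{i}(\omega)$, and since $\omega$ was arbitrary we conclude $X^{i} = X^{i}_{\emptyset}$.

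There is no real obstacle here; the proof is essentially a bookkeeping exercise that verifies the axioms were stated compatibly. The only thing to be a bit careful about is writing the indicator sum explicitly and pointing out that for each $\omega$ exactly one term survives, since otherwise the claim that the sum equals the single term $X^{i}_{\boldsymbol{X} = \boldsymbol{X}(\omega)}(\omega)$ might look like a sleight of hand. Once that pointwise collapse is made explicit, observational consistency delivers the equality with $X^{i}(\omega)$ immediately.
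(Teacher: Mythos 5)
Your proof is correct and follows essentially the same route as the paper: expand $X^{i}_{\emptyset}$ via the contraction formula of Axiom \ref{axiom:partial}, observe that exactly one indicator term survives at each $\omega$, and invoke observational consistency (Axiom \ref{axiom:consistency}) to identify the surviving term with $X^{i}(\omega)$. The only cosmetic difference is that you name $\boldsymbol{x}^{*} = \boldsymbol{X}(\omega)$ at the outset, whereas the paper singles out the nonvanishing indicator first and then reads off $\boldsymbol{X}(\omega) = \boldsymbol{x}^{*}$ from it.
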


\begin{proof}
By Axiom \ref{axiom:partial}, we can derive $X^{i}_{\emptyset}$ by fully contracting the complete potential outcomes:
\begin{align*}
X^{i}_{\emptyset}(\omega) &= \sum_{\boldsymbol{x}} I_{\boldsymbol{X} = \boldsymbol{x}}(\omega) X^{i}_{\boldsymbol{X} = \boldsymbol{x}}(\omega) \\
\end{align*}
\noindent All but one of the terms in the above summation are zero. Without loss of generality, let us assume $I_{\boldsymbol{X} = \boldsymbol{x}^*}(\omega) = 1$ with $\boldsymbol{x}^* = (x^*_1, \hdots, x^*_n)$. Then 
\begin{align*}
X^{i}_{\emptyset}(\omega) &= I_{\boldsymbol{x}^*}(\omega)X^{i}_{\boldsymbol{x}^*}(\omega) \\
&= x^*_i
\end{align*}
\noindent by Axiom \ref{axiom:consistency}. By the definition of the indicator random variable, $\boldsymbol{X}(\omega) = \boldsymbol{x}^*$, and so $X^{i}(\omega) = x^*_i$, as required.

\end{proof}

The operation of contraction allows us to consider sub-OCMs: OCMs nested within larger OCMs. For example, consider an OCM on three observable random variables $X$, $Y$, and $Z$, which we denote $\text{OCM}_{XYZ}$. Suppose that data for $Z$ has been discarded, or was never recorded. Any downstream analysis of data from $X$ and $Y$ alone concerns only the sub-OCM on $X$ and $Y$, denoted $\text{OCM}_{XY}$. In $\text{OCM}_{XY}$ the complete potential outcomes $Y_{(X, Y) = (x, y)}$ and $X_{(X, Y) = (x, y)}$ are defined by contracting over $z$ in the original OCM. 

Self-consistency for all such sub-OCMs requires that operation of contraction be well-behaved in various ways. For instance, Corollary \ref{cor:fully_marginalized} shows that $Y$ can be obtained by fully contracting the complete potential outcomes $Y_{(X, Y, Z) = (x, y, z)}$ of $\text{OCM}_{XYZ}$. Since $\text{OCM}_{XY}$ is an OCM on $X$ and $Y$, then fully contracting the complete potential outcomes $Y_{(X, Y) = (x, y)}$ of $\text{OCM}_{XY}$ should also recover $Y$. This requires that first contracting over $z$ and then contracting over $x$ and $y$ is equivalent to contracting over $x$, $y$, and $z$ simultaneously. This and other properties of contraction are summarized in Corollary \ref{cor:mult_contractions} below.

\begin{cor}[Composition of contractions]\label{cor:mult_contractions}
Let us define $M_{\bar{\mathcal{S}}}$ as the operation of contraction over the set $\bar{\mathcal{S}}$. Then 
\begin{align*}
M_{\bar{\mathcal{S}}_1} \circ M_{\bar{\mathcal{S}}_2} \circ \hdots \circ M_{\bar{\mathcal{S}}_n} = M_{\bar{\mathcal{S}}_1 \cup \bar{\mathcal{S}}_2 \cup \hdots \cup \bar{\mathcal{S}}_n} 
\end{align*}
\end{cor}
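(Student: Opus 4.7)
The plan is to reduce to the two-term identity $M_{\bar{\mathcal{A}}} \circ M_{\bar{\mathcal{B}}} = M_{\bar{\mathcal{A}} \cup \bar{\mathcal{B}}}$ by induction on $n$. The base case $n = 1$ is tautological. For the inductive step, assuming the claim holds for compositions of $n-1$ operations, the composition of $n$ operations is
\begin{align*}
M_{\bar{\mathcal{S}}_1} \circ \bigl(M_{\bar{\mathcal{S}}_2} \circ \cdots \circ M_{\bar{\mathcal{S}}_n}\bigr) = M_{\bar{\mathcal{S}}_1} \circ M_{\bar{\mathcal{S}}_2 \cup \cdots \cup \bar{\mathcal{S}}_n},
\end{align*}
to which the binary identity then applies, yielding $M_{\bar{\mathcal{S}}_1 \cup \cdots \cup \bar{\mathcal{S}}_n}$ as required.

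For the binary case, I would first observe that the contraction formula in Axiom \ref{axiom:partial} extends to partial potential outcomes in the natural way: if $X^i_{\boldsymbol{x}^{\mathcal{T}}}$ is a partial potential outcome with remaining indices $\mathcal{T}$, then $M_{\bar{\mathcal{A}}}$ acts nontrivially only on the indices of $\bar{\mathcal{A}}$ that still remain, namely $\bar{\mathcal{A}} \cap \mathcal{T}$, and the output has remaining index set $\mathcal{T} \setminus \bar{\mathcal{A}}$. Substituting the definition of $M_{\bar{\mathcal{B}}}$ into the definition of $M_{\bar{\mathcal{A}}}$ then yields a double sum over $\boldsymbol{x}^{\bar{\mathcal{A}} \cap \mathcal{B}}$ and $\boldsymbol{x}^{\bar{\mathcal{B}}}$ whose integrand is a product of two indicator random variables times a complete potential outcome.

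The crux of the argument is the pointwise multiplicativity identity
\begin{align*}
I_{\boldsymbol{X}^{C} = \boldsymbol{x}^{C}}(\omega)\, I_{\boldsymbol{X}^{D} = \boldsymbol{x}^{D}}(\omega) = I_{\boldsymbol{X}^{C \cup D} = \boldsymbol{x}^{C \cup D}}(\omega),
\end{align*}
valid whenever $\boldsymbol{x}^C$ and $\boldsymbol{x}^D$ agree on $C \cap D$, and vanishing otherwise. Applied with $C = \bar{\mathcal{A}} \cap \mathcal{B}$ and $D = \bar{\mathcal{B}}$, these two index sets are disjoint and their union is exactly $\bar{\mathcal{A}} \cup \bar{\mathcal{B}}$, so the product of indicators collapses to a single indicator $I_{\boldsymbol{X}^{\bar{\mathcal{A}} \cup \bar{\mathcal{B}}} = \boldsymbol{x}^{\bar{\mathcal{A}} \cup \bar{\mathcal{B}}}}(\omega)$ and the double sum re-indexes as a single sum over $\boldsymbol{x}^{\bar{\mathcal{A}} \cup \bar{\mathcal{B}}}$. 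The resulting expression is precisely the definition of $M_{\bar{\mathcal{A}} \cup \bar{\mathcal{B}}}(X^i_{\boldsymbol{X} = \boldsymbol{x}})(\omega)$.

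The main obstacle is the set-theoretic bookkeeping when the contractions overlap, i.e.\ $\bar{\mathcal{A}} \cap \bar{\mathcal{B}} \ne \emptyset$. In that case the outer operation $M_{\bar{\mathcal{A}}}$ must be interpreted as contracting only the surviving indices $\bar{\mathcal{A}} \cap \mathcal{B}$, rather than all of $\bar{\mathcal{A}}$, and the correct decomposition is the disjoint union $\bar{\mathcal{A}} \cup \bar{\mathcal{B}} = (\bar{\mathcal{A}} \cap \mathcal{B}) \sqcup \bar{\mathcal{B}}$ rather than the naive (and overcounted) $\bar{\mathcal{A}} \sqcup \bar{\mathcal{B}}$. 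Once these indices are tracked carefully, the remaining steps are a routine application of the indicator multiplicativity identity above.
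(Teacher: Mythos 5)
Your plan is correct and follows essentially the same route as the paper: reduce to the two-term identity by iterating/induction, then prove the binary case by substituting one contraction into the other and collapsing the product of indicators over the disjoint decomposition $\bar{\mathcal{A}} \cup \bar{\mathcal{B}} = (\bar{\mathcal{A}} \cap \mathcal{B}) \sqcup \bar{\mathcal{B}}$ into a single indicator, exactly as in the paper's Appendix~\ref{app:mult_contractions}. The only difference is cosmetic: you state the indicator multiplicativity identity explicitly, whereas the paper uses it implicitly in its index-chasing computation.
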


\begin{proof}
In Appendix \ref{app:mult_contractions}, we show that $M_{\bar{\mathcal{S}}_1} \circ M_{\bar{\mathcal{S}}_2} = M_{\bar{\mathcal{S}}_1 \cup \bar{\mathcal{S}}_2}$. The full result will follow by simply applying it to an arbitrary sequence of contractions, as follows:
\begin{align*}
M_{\bar{\mathcal{S}}_1} \circ M_{\bar{\mathcal{S}}_2} \circ \hdots \circ M_{\bar{\mathcal{S}}_n} &= M_{\bar{\mathcal{S}}_1} \circ M_{\bar{\mathcal{S}}_2} \circ \hdots \circ M_{\bar{\mathcal{S}}_{n - 1} \cup \bar{\mathcal{S}}_n} \\
&= M_{\bar{\mathcal{S}}_1} \circ M_{\bar{\mathcal{S}}_2} \circ \hdots \circ M_{\bar{\mathcal{S}}_{n - 2} \cup \bar{\mathcal{S}}_{n - 1} \cup \bar{\mathcal{S}}_n} \\
& \vdots \\
&= M_{\bar{\mathcal{S}}_1 \cup \bar{\mathcal{S}}_{2} \cup \hdots \cup \bar{\mathcal{S}}_n}
\end{align*}
\end{proof}
\noindent A useful observation from Corollary \ref{cor:mult_contractions} is that contraction is commutative and associative, since these are properties of the union operator.

Finally, we show that a very general version of the fundamental problem of causal inference, discussed in Section \ref{sec:potential_outcomes} emerges from our axiomatic framework. In the simplest setting, where we are trying to understand the effect of a single binary random variable $X$ on another random variable $Y$, the fundamental problem of causal inference states that the potential outcomes $Y_{X = 0}$ and $Y_{X = 1}$ are never simultaneously observable. Corollary \ref{cor:fundamental_problem} below shows that in an OCS, no two potential outcomes (complete or partial) are simultaneously observable.

\begin{cor}[Generalized statement of the fundamental problem of causal inference]\label{cor:fundamental_problem}
Let $\mathcal{X}^{i}_{\boldsymbol{x}^{\mathcal{S}}}$ be defined as follows:
\begin{align*}
\mathcal{X}^{i}_{\boldsymbol{x}^{\mathcal{S}}} \equiv \{ \omega : X^{i}_{\boldsymbol{x}^{\mathcal{S}}}(\omega) \text{ is identified} \}
\end{align*} 
\noindent If $\boldsymbol{x}^{\mathcal{S}} \ne \boldsymbol{\tilde{x}}^{\mathcal{S}}$, then
\begin{align*}
\mathcal{X}^{i}_{\boldsymbol{x}^{\mathcal{S}}} \cap \mathcal{X}^{i}_{\boldsymbol{\tilde{x}}^{\mathcal{S}}} = \emptyset
\end{align*} 
\end{cor}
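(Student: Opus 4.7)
The plan is to show that $\mathcal{X}^{i}_{\boldsymbol{x}^{\mathcal{S}}}$ is exactly the preimage $(\boldsymbol{X}^{\mathcal{S}})^{-1}(\boldsymbol{x}^{\mathcal{S}})$, from which the disjointness claim becomes immediate: since $\boldsymbol{X}^{\mathcal{S}}$ is a function, the preimages of distinct values $\boldsymbol{x}^{\mathcal{S}} \ne \boldsymbol{\tilde{x}}^{\mathcal{S}}$ must be disjoint. This reduces the corollary to identifying the identifiability set.

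First I would treat the special case of complete potential outcomes. By Axiom \ref{axiom:consistency}, $X^{i}_{\boldsymbol{X} = \boldsymbol{x}}(\omega)$ is identified whenever $I_{\boldsymbol{X} = \boldsymbol{x}}(\omega) = 1$, i.e., whenever $\boldsymbol{X}(\omega) = \boldsymbol{x}$, and equals $x^i$ there. Conversely, observational consistency provides no value when $\boldsymbol{X}(\omega) \ne \boldsymbol{x}$, so $\mathcal{X}^i_{\boldsymbol{x}} = \boldsymbol{X}^{-1}(\boldsymbol{x})$. This establishes the corollary in the case $\mathcal{S} = \{1, \hdots, n\}$.

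Next I would extend to partial potential outcomes using the contraction formula in Axiom \ref{axiom:partial}:
\begin{equation*}
X^{i}_{\boldsymbol{X}^{\mathcal{S}} = \boldsymbol{x}^{\mathcal{S}}}(\omega) = \sum_{\boldsymbol{x}^{\bar{\mathcal{S}}}} I_{\boldsymbol{X}^{\bar{\mathcal{S}}} = \boldsymbol{x}^{\bar{\mathcal{S}}}}(\omega)\, X^{i}_{\boldsymbol{X} = (\boldsymbol{x}^{\mathcal{S}}, \boldsymbol{x}^{\bar{\mathcal{S}}})}(\omega).
\end{equation*}
For any $\omega$, exactly one indicator $I_{\boldsymbol{X}^{\bar{\mathcal{S}}} = \boldsymbol{x}^{\bar{\mathcal{S}}}}(\omega)$ is nonzero, namely the one with $\boldsymbol{x}^{\bar{\mathcal{S}}} = \boldsymbol{X}^{\bar{\mathcal{S}}}(\omega)$. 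All other summands vanish identically and therefore contribute identified zeros; the whole sum is identified iff that single surviving summand is identified, which by the previous step requires $\boldsymbol{X}(\omega) = (\boldsymbol{x}^{\mathcal{S}}, \boldsymbol{X}^{\bar{\mathcal{S}}}(\omega))$. The $\bar{\mathcal{S}}$-coordinates match automatically, so this condition reduces to $\boldsymbol{X}^{\mathcal{S}}(\omega) = \boldsymbol{x}^{\mathcal{S}}$. Hence $\mathcal{X}^{i}_{\boldsymbol{x}^{\mathcal{S}}} = (\boldsymbol{X}^{\mathcal{S}})^{-1}(\boldsymbol{x}^{\mathcal{S}})$, and the required disjointness follows.

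The main obstacle is a conceptual one rather than a computational one: pinning down what it means for the contracted sum to be ``identified at $\omega$'' when some of its summands involve factors that are themselves not identified. The resolution is to observe that any unidentified factor in these summands is multiplied by an indicator that is identified and equal to zero; so the product is pinned down as $0$ independently of the unidentified factor. Once this observation is cleanly stated, the argument that only the lone surviving summand matters goes through, and the rest of the proof is routine.
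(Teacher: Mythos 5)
Your proposal is correct and takes essentially the same route as the paper: you prove that $\mathcal{X}^{i}_{\boldsymbol{x}^{\mathcal{S}}}$ equals the set on which $I_{\boldsymbol{X}^{\mathcal{S}} = \boldsymbol{x}^{\mathcal{S}}}$ is $1$ (the paper's Lemma \ref{lemma:identification}), resolving the ``unidentified factor times an identified zero indicator'' subtlety exactly as the paper does, and then conclude via disjointness of the preimages of distinct values (the paper's Lemma \ref{lemma:partition}).
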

\begin{proof}
First we note that $\mathcal{X}_{\boldsymbol{x}^{\mathcal{S}}} = \mathcal{I}_{\boldsymbol{x}^{\mathcal{S}}}$ from Lemma \ref{lemma:identification} of Appendix \ref{app:fundamental_problem}. Then we use Lemma \ref{lemma:partition} from Appendix \ref{app:fundamental_problem} to note that $\mathcal{I}_{\boldsymbol{x}^{\mathcal{S}}} \cap \mathcal{I}_{\boldsymbol{\tilde{x}}^{\mathcal{S}}} = \emptyset$
\end{proof}

Analogous to our observations from Section \ref{sec:causal_effects}, there is a natural conflict between the definition of causality provided by Definition \ref{def:general_causality} and the fundamental problem of causal inference stated above. In particular, Definition \ref{def:general_causality} states that $\boldsymbol{X}^{\mathcal{S}} \to X^i$ if $X^i_{\boldsymbol{X}^{\mathcal{S}} = \boldsymbol{x}^{\mathcal{S}}} \ne X^i_{\boldsymbol{X}^{\mathcal{S}} = \boldsymbol{\tilde{x}}^{\mathcal{S}}}$ for $\boldsymbol{x}^{\mathcal{S}} \ne \tilde{\boldsymbol{x}}^{\mathcal{S}}$ on some set of positive measure. Corollary \ref{cor:fundamental_problem}, however, establishes that $X^i_{\boldsymbol{X}^{\mathcal{S}} = \boldsymbol{x}^{\mathcal{S}}}$ and  $X^i_{\boldsymbol{X}^{\mathcal{S}} = \boldsymbol{\tilde{x}}^{\mathcal{S}}}$ are never simultaneously identified. In the absence of randomization or additional assumptions, Corollary \ref{cor:fundamental_problem} reiterates that causal effects generally cannot be determined from observable data.

\section{Discussion}

In this work, we have described the interface between causal inference and classical probability and made initial steps towards developing a mathematically axiomatized theory of observable causal models. Our discussion has been centered around a careful examination of simple systems, each highlighting the utility of a measure theoretic perspective on different aspects of causal inference. 

There are many important questions that we have only begun to consider, and hope that this work will initiate deeper inquiry into the relationship between causality and probability. In extending the mathematical development of observable causal systems, an essential future step will be the inclusion of continuous random variables. Additionally, the causal concepts that we have explored in this work---including causal effects, causal interactions, randomization, and matching---are by no means exhaustive. We anticipate that a measure theoretic description of many more causal concepts will also be useful.

Throughout this work, measure theory has provided clarity and definitions to abstract concepts. As such, we have only needed the most elementary tools from measure theory. We believe that measure theory can also play a more constructive role in the development of causal inference. Measure theoretic machinery has enabled many important advances, otherwise intractable, in the development of probability and statistics. We believe that this will also be true in the future development of causal inference. 

\section*{Acknowledgements}

This research was supported in part by NIH grant HG006448.

\clearpage
\bibliography{refs}
\bibliographystyle{unsrt}

\clearpage
\appendix
\section{Review of classical probability theory}\label{app:detailed_background}

\subsection{The probability space}
\begin{defin}[probability space]\label{def:probability_space}
A \textbf{\textit{probability space}}, denoted $(\Omega, \mathcal{F}, P)$, consists of three objects: 

\begin{enumerate}[(i)]
\item $\Omega$: A set called the \textbf{\textit{sample space}}. 
\item $\mathcal{F}$: A set of subsets of $\Omega$. $\mathcal{F}$ must contain $\Omega$ and be closed under complementations and countable unions (i.e., $\mathcal{F}$ is a $\sigma$-algebra). Elements $F \in \mathcal{F}$ are called \textbf{\textit{events}}.
\item $P$: A real-valued function defined on events $F \in \mathcal{F}$. $P$ must have three properties: a) it must be nonnegative b) $P(\Omega) = 1$ and c) for any countable sequence of mutually exclusive events, $P\left(\cup_{i = 1} F_i \right) = \sum_{i} P(F_i)$. $P$ is called the \textbf{\textit{probability measure}}.
\end{enumerate}
\end{defin}

In the measure theoretic framework, randomness originates from the selection of elements $\omega$ (called \textbf{\textit{random outcomes}}) from a set $\Omega$ (rcalled the \textbf{\textit{sample space}}). The probability with which different outcomes $\omega \in \Omega$ are selected is encoded by the \textbf{\textit{probability measure}} $P$. In some simple settings, $P$ will explicitly define the probability of each random outcome. For example, when $\Omega = \{\omega_1, \omega_2, \hdots, \omega_n\}$ is a finite set, $P(\omega_i)$ defines the probability that random outcome $\omega_i$ is selected. More generally however, $P$ is defined on subsets of $\Omega$, the \textbf{\textit{events}} $F \in \mathcal{F}$. Intuitively, the probability that the selected random outcome $\omega$ belongs to a particular event \textbf{\textit{event}} $F \in \mathcal{F}$ is $P(F)$ \cite{williams}. An event $F$ is said to have \textit{\textbf{measure}} $P(F)$. 

\subsection{Random variables, distributions, and expectations}
Typically, the probability space $(\Omega, \mathcal{F}, P)$ is not directly observable. Instead, we observable random variables. A random variable $X$ is a \textbf{\textit{$\mathcal{F}$-measurable function}}, meaning that it has the following properties:
\begin{align*}
X &: (\omega \in \Omega) \to \mR \\ 
X^{-1} &: (B \in \mathcal{B}) \to (F \in \mathcal{F})
\end{align*}
\noindent In other words, random variables map elements of $\omega \in \Omega$ to $\mR$ in such a way that the pre-image of sets $B \in \mathcal{B}$ are events $F \in \mathcal{F}$. 

A random variable $X$ and a probability measure $P$ define the \textbf{\textit{probability law}} $P_X$ of $X$ in the following way:  
\begin{align*}
P_X(B) &\equiv P \circ X^{-1} (B) \\
&= P(\{\omega : X(\omega) \in B \})
\end{align*}
\noindent The probability law precisely characterizes our uncertainty in $X$. For finite discrete random variables, to which we limit ourselves in this work, the picture is simple. Denoting $\{x_1, \hdots, x_k\}$ as the image of a random variable $X$, the probability law is characterized by the events $\{F_{x_1} = X^{-1}(x_1),\hdots, F_{x_k} = X^{-1}(x_k)$\}, which partition $\Omega$.

In full generality, the notion of the \textbf{\textit{expectation}} of a random variable is an involved topic within the measure theoretic framework. However, for our purposes, the following simple definition for a finite discrete random variable $X$ is sufficient:
\begin{equation}\label{eq:expectation}
\E[X] \equiv \sum_{i = 1}^k x_i P_X(x_i) = \sum_{i = 1}^k x_i P(F_{x_i})
\end{equation}
\subsection{Multiple random variables}\label{sec:mult}


If $Y$ is another random variable on the same probability space $(\Omega, \mathcal{F}, P)$, the multivariate random variable $(X,Y)$ is constructed in the natural way:
\begin{displaymath}
(X,Y)(\omega) = (X(\omega), Y(\omega)) \in \mR^2
\end{displaymath}
\noindent We can define the joint probability law $P_{X,Y}$ analogously to the univariate case:
\begin{align*}
P_{X,Y}(B_2) &\equiv P \circ (X,Y)^{-1} (B_2) \\ 
& = P(\{\omega : (X,Y) (\omega) \in B_2 \})
\end{align*}
\noindent where the $B_2$ (an open disc, for example) is an element of $\mathcal{B}_2$, the Borel $\sigma-$algebra on $\mR^2$. This joint distribution $P_{X,Y}$ completely determines the marginal distributions $P_X$ and $P_Y$. Specifically, we have:
\begin{align*}
P_X(B_X)  &\equiv P_{X, Y}(B_X \times \mathbb{R})  \\
P_Y(B_Y)  &\equiv P_{X, Y}(\mathbb{R} \times B_Y)
\end{align*}
\noindent Where $\times$ denotes the Cartesian product. Generalizing the above concepts to any number of random variables is straightforward.

The random variable $X$ can provide information about the random variable $Y$ through the conditional distribution of $Y$ given $X$. Assuming $P_X(x) \ne 0$, and that both $X$ and $Y$ are finite discrete random variables, the conditional probability law for $Y$ given $X = x$ is defined as follows: 
\begin{align}\label{eq:conditional_law}
P_{Y|X = x}(y) &\equiv \frac{P_{X,Y}(x, y)}{P_X(x)} \nonumber\\
&= \frac{P(F_y \cap F_x)}{P(F_x)}
\end{align}
\noindent From Equation \ref{eq:conditional_law}, we see that the conditional probability law $P_{Y|X=x}$ depends only on the behavior of the random variables $X$ and $Y$ within the subset $F_x$ of $\Omega$. When the random variable $Y$ behaves differently on different subsets $F_x$ of $\Omega$, then the value of $X$ is informative about $Y$. When this is the case, the conditional probability laws $P_{Y|X = x}$ are different for different values of $x$, and the random variables $X$ and $Y$ are called \textbf{\textit{dependent}}. Otherwise, when $Y$ behaves identically on every subset $F_x$ of $\Omega$, then the conditional probability laws $P_{Y|X = x}$ are identical for every value of $x$ and $X$ and $Y$ are called \textbf{\textit{independent}}.

\subsection{Product spaces}\label{sec:product_space}

The previous section defines independence through conditional distributions: $X$ and $Y$ are independent if the conditional probability laws $P_{Y|X=x}$ are the same for every value of $x$. The present section discusses how to construct a probability space, called the product space, on which random variables are independent by design. This construction will be useful when we think about randomized experiments in Section \ref{sec:randomization}.

Suppose we have two separate probability spaces $(\Omega_1, \mathcal{F}_1, P_1)$ and $(\Omega_2, \mathcal{F}_2, P_2)$. The product space provides a prescription for combining the two probability spaces into a single one. The three components of this \textbf{\textit{product space}} $(\Omega, \mathcal{F}, P)$ are built from the components of the original probability spaces in the following natural way:

\begin{enumerate}[(i)]
\item $\Omega$: The sample space $\Omega$ is simply the Cartesian product of the original two sample spaces:
\begin{align*}
\Omega \equiv \Omega_1 \times \Omega_2 = \{(\omega_1, \omega_2) : \omega_1 \in \Omega_1, \omega_2 \in \Omega_2 \}
\end{align*}
\item $\mathcal{F}$: A product event $F = F_1 \times F_2$ is defined as follows: $F \equiv \{(\omega_1, \omega_2) : \omega_1 \in F_1, \omega_2 \in F_2\}$. The product $\sigma$-algebra $\mathcal{F}$, is defined as the smallest $\sigma$-algebra containing all of the product events $\mathcal{F}_1 \times \mathcal{F}_2 = \{F_1 \times F_2: F_1 \in \mathcal{F}_1, F_2 \in \mathcal{F}_2 \}$.  

\item $P$: The product probability measure $P$ is generated by the rule
\begin{equation}\label{eq:product}
P(F_1 \times F_2) = P_1(F_1)P_2(F_2) 
\end{equation}
\noindent This measure is called the \textbf{\textit{product measure}}, and is denoted $P = P_1 \times P_2$. 
\end{enumerate}

If a random variable $X$ is defined on $(\Omega_1, \mathcal{F}_1, P_1)$ and another random variable $Y$ is defined on $(\Omega_2, \mathcal{F}_2, P_2)$, then $X$ and $Y$ will be independent random variables on the product space
\begin{align*}
(\Omega, \mathcal{F}, P) \equiv (\Omega_1 \times \Omega_2, \mathcal{F}_1 \times \mathcal{F}_2, P_1 \times P_2)
\end{align*}
\noindent The product space's ability to induce independence between random variables will be a useful feature in this work. In particular, when we define a randomized experiment in Section \ref{sec:randomization}, the ``treatment" $X$ a ``outcome" $Y$ will live on a product space. 

\section{Proof of Corollary \ref{cor:mult_contractions}}\label{app:mult_contractions}         
\begin{lem}[Composition of two contractions]\label{lem:mult_contractions}
\begin{align*}
M_{\bar{\mathcal{S}}_1} \circ M_{\bar{\mathcal{S}}_2} = M_{\bar{\mathcal{S}}_1 \cup \bar{\mathcal{S}}_2}
\end{align*}
\end{lem}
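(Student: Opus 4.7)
The plan is to verify the identity by expanding both sides using Axiom \ref{axiom:partial} and matching terms. To interpret the composition, applying $M_{\bar{\mathcal{S}}_2}$ to the complete potential outcomes produces partial potential outcomes indexed by $\mathcal{S}_2$, and then applying $M_{\bar{\mathcal{S}}_1}$ to these is read as contracting over only those indices of $\bar{\mathcal{S}}_1$ that are still present, namely $\bar{\mathcal{S}}_1 \cap \mathcal{S}_2$. The goal is to show that the resulting nested contraction equals the single contraction over $\bar{\mathcal{S}}_1 \cup \bar{\mathcal{S}}_2$ applied directly to the complete potential outcomes.

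The key structural observation is the disjoint decomposition $\bar{\mathcal{S}}_1 \cup \bar{\mathcal{S}}_2 = (\bar{\mathcal{S}}_1 \cap \mathcal{S}_2) \cup \bar{\mathcal{S}}_2$ (the two pieces on the right are disjoint) and, correspondingly, $\{1, \ldots, n\} = (\mathcal{S}_1 \cap \mathcal{S}_2) \cup (\bar{\mathcal{S}}_1 \cap \mathcal{S}_2) \cup \bar{\mathcal{S}}_2$ as a disjoint union. This allows the sum over $\boldsymbol{x}^{\bar{\mathcal{S}}_1 \cup \bar{\mathcal{S}}_2}$ appearing in the RHS definition to be rewritten as an iterated sum over $\boldsymbol{x}^{\bar{\mathcal{S}}_1 \cap \mathcal{S}_2}$ and $\boldsymbol{x}^{\bar{\mathcal{S}}_2}$, which matches the structure of the nested contraction on the LHS after I plug in the definition of $M_{\bar{\mathcal{S}}_2}$.

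The substantive algebraic step is the factorization of the contracted indicator across this disjoint union,
\begin{align*}
I_{\boldsymbol{X}^{\bar{\mathcal{S}}_1 \cup \bar{\mathcal{S}}_2} = \boldsymbol{x}^{\bar{\mathcal{S}}_1 \cup \bar{\mathcal{S}}_2}}(\omega) = I_{\boldsymbol{X}^{\bar{\mathcal{S}}_1 \cap \mathcal{S}_2} = \boldsymbol{x}^{\bar{\mathcal{S}}_1 \cap \mathcal{S}_2}}(\omega) \cdot I_{\boldsymbol{X}^{\bar{\mathcal{S}}_2} = \boldsymbol{x}^{\bar{\mathcal{S}}_2}}(\omega),
\end{align*}
which follows from the observation that the contracted indicator defined in Axiom \ref{axiom:partial} is just the ordinary indicator of the joint-equality event $\{\boldsymbol{X}^{\mathcal{T}}(\omega) = \boldsymbol{x}^{\mathcal{T}}\}$ (the defining sum picks out the unique $\boldsymbol{x}^{\bar{\mathcal{T}}}$ matching $\boldsymbol{X}^{\bar{\mathcal{T}}}(\omega)$), and indicators of joint-equality events on disjoint index sets multiply. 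With the factorization in hand, substituting the expansion of $X^{i}_{\boldsymbol{X}^{\mathcal{S}_2} = \boldsymbol{x}^{\mathcal{S}_2}}$ into the outer contraction and regrouping the two indicator factors alongside the complete potential outcome $X^{i}_{\boldsymbol{X} = \boldsymbol{x}}$ produces precisely the definition of $M_{\bar{\mathcal{S}}_1 \cup \bar{\mathcal{S}}_2}$ applied to $\{X^{i}_{\boldsymbol{X} = \boldsymbol{x}}\}$.

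The main obstacle is notational bookkeeping rather than any deep content: because $\bar{\mathcal{S}}_1$ and $\bar{\mathcal{S}}_2$ may overlap, I must take care to interpret the outer operator as contracting only over the indices still present after the inner contraction, and to split the summation variables consistently along the disjoint decomposition when identifying the two sides. Once the indicator factorization is justified, the equality of the two expressions is immediate from Fubini-style rearrangement of the finite double sum.
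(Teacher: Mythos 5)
Your proposal is correct and follows essentially the same route as the paper's proof: both expand the nested contraction, use the fact that the product of contracted indicators over the disjoint index sets $\bar{\mathcal{S}}_1 \cap \mathcal{S}_2$ and $\bar{\mathcal{S}}_2$ equals the single contracted indicator over $\bar{\mathcal{S}}_1 \cup \bar{\mathcal{S}}_2$, and then regroup the finite double sum. The only difference is that the paper carries out the computation starting from potential outcomes with an arbitrary index set $\mathcal{A}$ rather than from the complete potential outcomes, but your argument generalizes verbatim by replacing $\{1, \ldots, n\}$ with $\mathcal{A}$ (and this generality is not needed for the right-associative induction in the corollary anyway).
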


\begin{proof}
We formally define $M_{\bar{\mathcal{S}}}$ as a mapping between sets of random variables:

\begin{align*}
M_{\bar{\mathcal{S}}}: \{ X^i_{\boldsymbol{X}^{\mathcal{S}'} = \boldsymbol{x}^{\mathcal{S}'}} \}_{\boldsymbol{x}^{\mathcal{S}'}} \to \{ X^i_{\boldsymbol{X}^{\mathcal{S}^*} = \boldsymbol{x}^{\mathcal{S}^*}} \}_{\boldsymbol{x}^{\mathcal{S}^*}}
\end{align*}

\noindent where $\{ X^i_{\boldsymbol{X}^{\mathcal{S}} = \boldsymbol{x}^{\mathcal{S}}} \}_{\boldsymbol{x}^{\mathcal{S}}}$ denotes the set of potential outcomes $X^i_{\boldsymbol{X}^{\mathcal{S}} = \boldsymbol{x}^{\mathcal{S}}}$ for all values of $\boldsymbol{x}^{\mathcal{S}}$, $\mathcal{S}^* = \mathcal{S}' \setminus \bar{\mathcal{S}}$, and 

\begin{align*}
X^i_{\boldsymbol{X}^{\mathcal{S}^*} = \boldsymbol{x}^{\mathcal{S}^*}} = \sum_{\boldsymbol{x}^{\bar{\mathcal{S}} \cap \mathcal{S}'}} I_{\boldsymbol{X}^{\bar{\mathcal{S}} \cap \mathcal{S}'} = \boldsymbol{x}^{\bar{\mathcal{S}} \cap \mathcal{S}'}}  X^i_{(\boldsymbol{X}^{\mathcal{S}'\setminus \bar{\mathcal{S}}}, \boldsymbol{X}^{\mathcal{S}'\cap \bar{\mathcal{S}}}) = (\boldsymbol{x}^{\mathcal{S}'\setminus \bar{\mathcal{S}}}, \boldsymbol{x}^{\mathcal{S}'\cap \bar{\mathcal{S}}})}
\end{align*}

\noindent In words, the operator $M_{\bar{\mathcal{S}}}$ returns a set of potential outcomes where the common indices $\bar{\mathcal{S}} \cap \mathcal{S}'$ are removed by the laws of contraction. 

Slightly abusing notation, let we will abbreviate $X^{i}_{\boldsymbol{X} = \boldsymbol{x}}$ by $X^{i}_{\boldsymbol{x}}$. Similarly, we will also write $X^{i}_{(\boldsymbol{X}^{\mathcal{S}_a}, \boldsymbol{X}^{\mathcal{S}_b}) = (\boldsymbol{x}^{\mathcal{S}_a}, \boldsymbol{x}^{\mathcal{S}_b})}$ by $X^{i}_{\boldsymbol{x}^{\mathcal{S}_a} \boldsymbol{x}^{\mathcal{S}_b}}$, where $\boldsymbol{x}^{\mathcal{S}_a}$ specifies the elements of $\boldsymbol{x}$ indexed by $\mathcal{S}_a$ for any index set $\mathcal{S}_a$ (and likewise for $\boldsymbol{x}^{\mathcal{S}_b}$). 

We with a subset of potential outcomes 
\begin{align*}
\left\{ X^i_{\boldsymbol{x}^{\mathcal{A}}} \right\}_{\boldsymbol{x}^{\mathcal{A}}}
\end{align*}
\noindent for any index set $\mathcal{A} \subset \{1, 2, \hdots, n\}$. We note that we can always have the decomposition 
\begin{align*}
\mathcal{A} = (\mathcal{A} \setminus \bar{\mathcal{S}}_2) \cup (\mathcal{A} \cap \bar{\mathcal{S}}_2)
\end{align*} 
\noindent Therefore, the set:
\begin{align*}
M_{\bar{S}_2}\left( \left\{ X^i_{\boldsymbol{x}^{\mathcal{A}}} \right\}_{\boldsymbol{x}^{\mathcal{A}}} \right) 
\end{align*}
\noindent will have elements
\begin{align*}
X^i_{\boldsymbol{x}^{\mathcal{S}^{*}}} &= \sum_{\boldsymbol{x}^{\bar{\mathcal{S}}_2 \cap \mathcal{A}}} I_{\boldsymbol{x}^{\bar{\mathcal{S}}_2 \cap \mathcal{A}}} X^i_{\boldsymbol{x}^{\mathcal{A} \setminus \bar{\mathcal{S}}_2} \boldsymbol{x}^{\bar{\mathcal{S}}_2 \cap \mathcal{A}}}
\end{align*}
\noindent Now applying $M_{\bar{S}_1}$ to the set $\left\{X^i_{\boldsymbol{x}^{\mathcal{A} \setminus \bar{\mathcal{S}}_2}} \right\}_{\boldsymbol{x}^{\mathcal{A} \setminus \bar{\mathcal{S}}_2}}
$, we have each element 
\begin{align*}
X^i_{\boldsymbol{x}^{\mathcal{S}^{**}}} &= \sum_{\boldsymbol{x}^{\bar{\mathcal{S}}_1 \cap (\mathcal{A} \setminus \bar{\mathcal{S}}_2)}} I_{\boldsymbol{x}^{\bar{\mathcal{S}}_1 \cap (\mathcal{A} \setminus \bar{\mathcal{S}}_2)}} X^i_{\boldsymbol{x}^{\mathcal{S}^{*}}} \\
&= \sum_{\boldsymbol{x}^{\bar{\mathcal{S}}_1 \cap (\mathcal{A} \setminus \bar{\mathcal{S}}_2)}} I_{\boldsymbol{x}^{\bar{\mathcal{S}}_1 \cap (\mathcal{A} \setminus \bar{\mathcal{S}}_2)}} \sum_{\boldsymbol{x}^{\bar{\mathcal{S}}_2 \cap \mathcal{A}}} I_{\boldsymbol{x}^{\bar{\mathcal{S}}_2 \cap \mathcal{A}}} X^i_{\boldsymbol{x}^{\mathcal{A} \setminus \bar{\mathcal{S}}_2} \boldsymbol{x}^{\bar{\mathcal{S}}_2 \cap \mathcal{A}}} \\
&= \sum_{\boldsymbol{x}^{\bar{\mathcal{S}}_1 \cap (\mathcal{A} \setminus \bar{\mathcal{S}}_2)}} I_{\boldsymbol{x}^{\bar{\mathcal{S}}_1 \cap (\mathcal{A} \setminus \bar{\mathcal{S}}_2)}} \sum_{\boldsymbol{x}^{\bar{\mathcal{S}}_2 \cap \mathcal{A}}} I_{\boldsymbol{x}^{\bar{\mathcal{S}}_2 \cap \mathcal{A}}} X^i_{\boldsymbol{x}^{(\mathcal{A} \setminus \bar{\mathcal{S}}_2) \setminus \bar{\mathcal{S}}_1} \boldsymbol{x}^{(\mathcal{A} \setminus \bar{\mathcal{S}}_2) \cap \bar{\mathcal{S}}_1} \boldsymbol{x}^{\bar{\mathcal{S}}_2 \cap \mathcal{A}}} \\
&= \sum_{\boldsymbol{x}^{(\bar{\mathcal{S}}_1 \cap (\mathcal{A} \setminus \bar{\mathcal{S}}_2)) \cup (\bar{\mathcal{S}}_2 \cap \mathcal{A})}} I_{\boldsymbol{x}^{(\bar{\mathcal{S}}_1 \cap (\mathcal{A} \setminus \bar{\mathcal{S}}_2)) \cup (\bar{\mathcal{S}}_2 \cap \mathcal{A})}}  X^i_{\boldsymbol{x}^{(\mathcal{A} \setminus \bar{\mathcal{S}}_2) \setminus \bar{\mathcal{S}}_1} \boldsymbol{x}^{(\bar{\mathcal{S}}_1 \cap (\mathcal{A} \setminus \bar{\mathcal{S}}_2)) \cup (\bar{\mathcal{S}}_2 \cap \mathcal{A})}} \\
&= \sum_{\boldsymbol{x}^{\mathcal{A}\cap (\bar{\mathcal{S}}_1 \cup \bar{\mathcal{S}}_1)}} I_{\boldsymbol{x}^{\mathcal{A}\cap (\bar{\mathcal{S}}_1 \cup \bar{\mathcal{S}}_1)}}  X^i_{\boldsymbol{x}^{(\mathcal{A} \setminus \bar{\mathcal{S}}_2) \setminus \bar{\mathcal{S}}_1} \boldsymbol{x}^{\mathcal{A}\cap (\bar{\mathcal{S}}_1 \cup \bar{\mathcal{S}}_1)}} \\
&= \sum_{\boldsymbol{x}^{\mathcal{A}\cap (\bar{\mathcal{S}}_1 \cup \bar{\mathcal{S}}_1)}} I_{\boldsymbol{x}^{\mathcal{A}\cap (\bar{\mathcal{S}}_1 \cup \bar{\mathcal{S}}_1)}}  X^i_{\boldsymbol{x}^{\mathcal{A} \setminus (\bar{\mathcal{S}}_2 \cup \bar{\mathcal{S}}_1)} \boldsymbol{x}^{\mathcal{A}\cap (\bar{\mathcal{S}}_1 \cup \bar{\mathcal{S}}_1)}} \\
\end{align*}
\noindent Each step is just a tedious exercise of keeping track of indices. The fifth line follows from the fourth by noticing that $(\bar{\mathcal{S}}_1 \cap (\mathcal{A} \setminus \bar{\mathcal{S}}_2)) \cup (\bar{\mathcal{S}}_2 \cap \mathcal{A}) = \mathcal{A}\cap (\bar{\mathcal{S}}_1 \cup \bar{\mathcal{S}}_1)$. The sixth line follows from the fifth by noticing that $(\mathcal{A} \setminus \bar{\mathcal{S}}_2) \setminus \bar{\mathcal{S}}_1 = \mathcal{A} \setminus (\bar{\mathcal{S}}_2 \cup \bar{\mathcal{S}}_1)$. Finally, we notice that the final line is an element of 
\begin{align*}
M_{\bar{\mathcal{S}}_1 \cup \bar{\mathcal{S}}_1} \left( \left\{ X^i_{\boldsymbol{x}^{\mathcal{A}}} \right\}_{\boldsymbol{x}^{\mathcal{A}}} \right)
\end{align*}
\noindent as required.

%
\end{proof}

\section{Proof of Corollary \ref{cor:fundamental_problem}}\label{app:fundamental_problem}
\begin{lem}[indicators partition the sample space]\label{lemma:partition}
Let $\mathcal{I}_{\boldsymbol{x}^{\mathcal{S}}}$ be the subset of $\Omega$ that is mapped to 1 by the indicator $I_{\boldsymbol{x}^{\mathcal{S}}}$:
\begin{align*}
\mathcal{I}_{\boldsymbol{x}^{\mathcal{S}}} = \{\omega: I_{\boldsymbol{x}^{\mathcal{S}}}(\omega) = 1 \}
\end{align*}
\noindent Then 

\begin{enumerate}
\item $\cup_{\boldsymbol{x}^{\mathcal{S}}} \mathcal{I}_{\boldsymbol{x}^{\mathcal{S}}} = \Omega$
\item $\mathcal{I}_{\boldsymbol{x}^{\mathcal{S}}} \cap \mathcal{I}_{\boldsymbol{\tilde{x}}^{\mathcal{S}}} = \emptyset $ for all $\boldsymbol{x}^{\mathcal{S}} \ne \boldsymbol{\tilde{x}}^{\mathcal{S}}$
\end{enumerate}
\end{lem}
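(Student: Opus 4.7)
My plan is to reduce both claims to the case of the \emph{complete} indicators $I_{\boldsymbol{X} = \boldsymbol{x}}$, where the partition property is essentially tautological, and then to propagate that property through the contraction formula supplied by Axiom \ref{axiom:partial}.

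First I would observe that the complete indicators partition $\Omega$ by definition: for every $\omega \in \Omega$ the multivariate random variable $\boldsymbol{X}$ takes exactly one value $\boldsymbol{x}^*(\omega) \in \boldsymbol{X}(\Omega)$, so $I_{\boldsymbol{X} = \boldsymbol{x}}(\omega) = 1$ iff $\boldsymbol{x} = \boldsymbol{x}^*(\omega)$ and equals $0$ otherwise. Hence the collection $\{\mathcal{I}_{\boldsymbol{x}}\}_{\boldsymbol{x}\in \boldsymbol{X}(\Omega)}$ is already a partition of $\Omega$, which establishes the lemma in the special case $\mathcal{S} = \{1, 2, \dots, n\}$.

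Next I would pass to an arbitrary $\mathcal{S}$ via the contraction identity
\[
I_{\boldsymbol{X}^{\mathcal{S}} = \boldsymbol{x}^{\mathcal{S}}}(\omega) \;=\; \sum_{\boldsymbol{x}^{\bar{\mathcal{S}}}} I_{\boldsymbol{X} = (\boldsymbol{x}^{\mathcal{S}}, \boldsymbol{x}^{\bar{\mathcal{S}}})}(\omega)
\]
from Axiom \ref{axiom:partial}. Fixing $\omega$ and writing $\boldsymbol{x}^*(\omega) = (\boldsymbol{x}^{*,\mathcal{S}}, \boldsymbol{x}^{*,\bar{\mathcal{S}}})$, by the previous paragraph exactly one term in this sum is nonzero, namely the one with $(\boldsymbol{x}^{\mathcal{S}}, \boldsymbol{x}^{\bar{\mathcal{S}}}) = \boldsymbol{x}^*(\omega)$. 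Consequently $I_{\boldsymbol{X}^{\mathcal{S}} = \boldsymbol{x}^{\mathcal{S}}}(\omega) = 1$ precisely when $\boldsymbol{x}^{\mathcal{S}} = \boldsymbol{x}^{*,\mathcal{S}}$, and equals $0$ otherwise. Thus each $\omega$ belongs to exactly one $\mathcal{I}_{\boldsymbol{x}^{\mathcal{S}}}$, which simultaneously delivers both conclusions of the lemma: $\bigcup_{\boldsymbol{x}^{\mathcal{S}}} \mathcal{I}_{\boldsymbol{x}^{\mathcal{S}}} = \Omega$, and $\mathcal{I}_{\boldsymbol{x}^{\mathcal{S}}} \cap \mathcal{I}_{\boldsymbol{\tilde{x}}^{\mathcal{S}}} = \emptyset$ whenever $\boldsymbol{x}^{\mathcal{S}} \ne \boldsymbol{\tilde{x}}^{\mathcal{S}}$.

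There is no real obstacle here --- the contraction definition in Axiom \ref{axiom:partial} was evidently designed so that disjoint atoms of the complete-indicator partition merge without overlap. The only care required is the multi-index bookkeeping for the decomposition $\boldsymbol{x} = (\boldsymbol{x}^{\mathcal{S}}, \boldsymbol{x}^{\bar{\mathcal{S}}})$, and using the finite-discrete hypothesis on each $X^i$ to guarantee that the contraction is a finite sum of $\{0,1\}$-valued indicators (so the sum is itself $\{0,1\}$-valued and cannot accumulate above one).
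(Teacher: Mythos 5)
Your proposal is correct and follows essentially the same route as the paper's proof: establish that the complete indicators $\{I_{\boldsymbol{X}=\boldsymbol{x}}\}$ tautologically partition $\Omega$, then lift this to the partial indicators via the contraction identity $I_{\boldsymbol{X}^{\mathcal{S}} = \boldsymbol{x}^{\mathcal{S}}} = \sum_{\boldsymbol{x}^{\bar{\mathcal{S}}}} I_{\boldsymbol{X} = \boldsymbol{x}}$ from Axiom \ref{axiom:partial}. The only (cosmetic) difference is that you argue pointwise in $\omega$, showing exactly one term of the sum is nonzero, whereas the paper phrases the same fact set-theoretically as $\mathcal{I}_{\boldsymbol{x}^{\mathcal{S}}} = \cup_{\boldsymbol{x}^{\bar{\mathcal{S}}}} \mathcal{I}_{\boldsymbol{x}}$ and appeals to distributivity of unions and intersections.
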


\begin{proof}
For $\mathcal{S} = (1, 2, \hdots, n)$ (i.e., when we are considering the statement applied to the complete potential outcomes) both 1 and 2 follow simply from the properties of indicator random variables. When $S \subset \{1, 2, \hdots, n\}$, we note that since $I_{\boldsymbol{x}^{\mathcal{S}}} =\sum_{\boldsymbol{x}^{\bar{\mathcal{S}}}} I_{\boldsymbol{x}}$, we have that:
\begin{align*}
\mathcal{I}_{\boldsymbol{x}^{\mathcal{S}}} = \cup_{\boldsymbol{x}_{\bar{\mathcal{S}}}} \mathcal{I}_{\boldsymbol{x}}
\end{align*}
\noindent Part 1 is now clear because

\begin{align*}
\cup_{\boldsymbol{x}^{\mathcal{S}}} \mathcal{I}_{\boldsymbol{x}^{\mathcal{S}}} &= \cup_{\boldsymbol{x}^{\mathcal{S}}}\cup_{\boldsymbol{x}_{\bar{\mathcal{S}}}}\mathcal{I}_{\boldsymbol{x}} \\
&= \cup_{\boldsymbol{x}} \mathcal{I}_{\boldsymbol{x}}
\end{align*}

\noindent Part 2 follows because of the distributivity of set operations and the fact that $\mathcal{I}_{\boldsymbol{x}} \cap \mathcal{I}_{\boldsymbol{\tilde{x}}} = \emptyset$.
 
\end{proof}

\begin{lem}[identified subset of $X^{i}_{\boldsymbol{x}^{\mathcal{S}}}$]\label{lemma:identification}
\begin{align*}
\mathcal{X}^{i}_{\boldsymbol{x}^{\mathcal{S}}} = \mathcal{I}_{\boldsymbol{x}^{\mathcal{S}}}
\end{align*}
\end{lem}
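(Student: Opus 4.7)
The plan is to exploit the partial consistency formula from Axiom \ref{axiom:partial} to reduce the partial potential outcome $X^{i}_{\boldsymbol{x}^{\mathcal{S}}}(\omega)$ to a single complete potential outcome evaluated at $\omega$, and then read off identification directly from Axiom \ref{axiom:consistency}. Fix $\omega \in \Omega$ and set $\boldsymbol{x}^{\bar{\mathcal{S}},*} \equiv \boldsymbol{X}^{\bar{\mathcal{S}}}(\omega)$, the observed coordinates of $\omega$ outside $\mathcal{S}$. By Lemma \ref{lemma:partition} applied to $\bar{\mathcal{S}}$, exactly one indicator $I_{\boldsymbol{X}^{\bar{\mathcal{S}}} = \boldsymbol{x}^{\bar{\mathcal{S}}}}$ equals $1$ at $\omega$, namely the one indexed by $\boldsymbol{x}^{\bar{\mathcal{S}},*}$, so the contraction sum collapses to
\begin{align*}
X^{i}_{\boldsymbol{x}^{\mathcal{S}}}(\omega) \;=\; X^{i}_{\boldsymbol{X} = (\boldsymbol{x}^{\mathcal{S}},\, \boldsymbol{x}^{\bar{\mathcal{S}},*})}(\omega).
\end{align*}
Hence identification of the left-hand side at $\omega$ is equivalent to identification of the unique complete potential outcome on the right.

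For the inclusion $\mathcal{I}_{\boldsymbol{x}^{\mathcal{S}}} \subseteq \mathcal{X}^{i}_{\boldsymbol{x}^{\mathcal{S}}}$: suppose $\omega \in \mathcal{I}_{\boldsymbol{x}^{\mathcal{S}}}$, so $\boldsymbol{X}^{\mathcal{S}}(\omega) = \boldsymbol{x}^{\mathcal{S}}$. Combined with $\boldsymbol{X}^{\bar{\mathcal{S}}}(\omega) = \boldsymbol{x}^{\bar{\mathcal{S}},*}$, this gives $\boldsymbol{X}(\omega) = (\boldsymbol{x}^{\mathcal{S}}, \boldsymbol{x}^{\bar{\mathcal{S}},*})$, so $I_{\boldsymbol{X} = (\boldsymbol{x}^{\mathcal{S}}, \boldsymbol{x}^{\bar{\mathcal{S}},*})}(\omega) = 1$. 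Axiom \ref{axiom:consistency} then pins down $\boldsymbol{X}_{\boldsymbol{X} = (\boldsymbol{x}^{\mathcal{S}}, \boldsymbol{x}^{\bar{\mathcal{S}},*})}(\omega) = (\boldsymbol{x}^{\mathcal{S}}, \boldsymbol{x}^{\bar{\mathcal{S}},*})$; reading off the $i$-th coordinate identifies the right-hand side of the displayed equation, and hence $X^{i}_{\boldsymbol{x}^{\mathcal{S}}}(\omega)$ itself.

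For the reverse inclusion I would argue the contrapositive. If $\omega \notin \mathcal{I}_{\boldsymbol{x}^{\mathcal{S}}}$, then $\boldsymbol{X}^{\mathcal{S}}(\omega) \ne \boldsymbol{x}^{\mathcal{S}}$, so $\boldsymbol{X}(\omega) \ne (\boldsymbol{x}^{\mathcal{S}}, \boldsymbol{x}^{\bar{\mathcal{S}},*})$ and consequently $I_{\boldsymbol{X} = (\boldsymbol{x}^{\mathcal{S}}, \boldsymbol{x}^{\bar{\mathcal{S}},*})}(\omega) = 0$. Since Axiom \ref{axiom:consistency} imposes a value on the complete potential outcome $X^{i}_{\boldsymbol{X} = \boldsymbol{x}}$ at $\omega$ only when the matching indicator equals $1$, the value $X^{i}_{\boldsymbol{X} = (\boldsymbol{x}^{\mathcal{S}}, \boldsymbol{x}^{\bar{\mathcal{S}},*})}(\omega)$ is not determined by observational consistency, and by the collapsed contraction formula neither is $X^{i}_{\boldsymbol{x}^{\mathcal{S}}}(\omega)$. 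Therefore $\omega \notin \mathcal{X}^{i}_{\boldsymbol{x}^{\mathcal{S}}}$.

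The main obstacle is conceptual rather than computational: the reverse direction requires arguing that the relevant complete potential outcome is \emph{genuinely} unconstrained at $\omega$, not merely that the obvious chain of implications from Axioms \ref{axiom:complete}--\ref{axiom:partial} fails. This is legitimate because the excerpt explicitly defines ``identified at $\omega$'' as being determined by observational consistency, and the axioms leave $X^{i}_{\boldsymbol{X} = \boldsymbol{x}}(\omega)$ free whenever $I_{\boldsymbol{X} = \boldsymbol{x}}(\omega) = 0$. Everything else is bookkeeping with indicators and the collapsing of the contraction sum to a single term.
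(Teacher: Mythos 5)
Your proof is correct and follows essentially the same route as the paper's: both collapse the contraction sum $X^{i}_{\boldsymbol{x}^{\mathcal{S}}}(\omega) = \sum_{\boldsymbol{x}^{\bar{\mathcal{S}}}} I_{\boldsymbol{X}^{\bar{\mathcal{S}}} = \boldsymbol{x}^{\bar{\mathcal{S}}}}(\omega)\, X^{i}_{\boldsymbol{X}=\boldsymbol{x}}(\omega)$ to the single complete potential outcome indexed by $(\boldsymbol{x}^{\mathcal{S}}, \boldsymbol{X}^{\bar{\mathcal{S}}}(\omega))$ and then observe that this term is identified precisely when its complete indicator equals $1$, i.e.\ when $\boldsymbol{X}^{\mathcal{S}}(\omega) = \boldsymbol{x}^{\mathcal{S}}$. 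The only cosmetic difference is that the paper reaches the collapse by expanding the partial indicator into a double sum over complete indicators, whereas you invoke Lemma \ref{lemma:partition} applied to $\bar{\mathcal{S}}$ directly.
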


\begin{proof}
Slightly abusing notation, let us write $X^{i}_{\boldsymbol{x}} = X^{i}_{\boldsymbol{x}^{\mathcal{S}} \boldsymbol{x}^{\bar{\mathcal{S}}}}$, where $\boldsymbol{x}^{\mathcal{S}}$ specifies the elements of $\boldsymbol{x}$ indexed by $\mathcal{S}$ (and likewise for $\boldsymbol{x}^{\bar{\mathcal{S}}}$). We note that
\begin{align}
X^{i}_{\boldsymbol{x}^{\mathcal{S}}}(\omega) &\equiv \sum_{\boldsymbol{x}^{\bar{\mathcal{S}}}} I_{\boldsymbol{x}^{\bar{\mathcal{S}}}}(\omega)X^{i}_{\boldsymbol{x}^{\mathcal{S}}\boldsymbol{x}^{\bar{\mathcal{S}}}}(\omega) \nonumber \\
&= \sum_{\boldsymbol{x}^{\bar{\mathcal{S}}}} \left( \sum_{\boldsymbol{\tilde{x}}^{\mathcal{S}}} I_{\boldsymbol{\tilde{x}}^{\mathcal{S}}\boldsymbol{x}^{\bar{\mathcal{S}}}}(\omega) \right) X^{i}_{\boldsymbol{x}^{\mathcal{S}} \boldsymbol{x}^{\bar{\mathcal{S}}}}(\omega) \nonumber \\
&= \sum_{\boldsymbol{x}^{\bar{\mathcal{S}}}} \sum_{\boldsymbol{\tilde{x}}^{\mathcal{S}}} I_{\boldsymbol{\tilde{x}}^{\mathcal{S}}\boldsymbol{x}^{\bar{\mathcal{S}}}}(\omega) X^{i}_{\boldsymbol{x}^{\mathcal{S}} \boldsymbol{x}^{\bar{\mathcal{S}}}}(\omega) \label{eq:summation_identity}
\end{align}
\noindent The above summation shows that $X^{i}_{\boldsymbol{x}^{\mathcal{S}}}$ is identified at $\omega$ iff there exists $\boldsymbol{\check{x}}^{\mathcal{S}}$ and $\boldsymbol{\check{x}}^{\bar{\mathcal{S}}}$ such that $I_{\boldsymbol{\check{x}}^{\mathcal{S}}\boldsymbol{\check{x}}^{\bar{\mathcal{S}}}}(\omega) = 1$ and $X^{i}_{\boldsymbol{x}^{\mathcal{S}}\boldsymbol{x}^{\bar{\mathcal{S}}}}$ is identified. For $X^{i}_{\boldsymbol{x}^{\mathcal{S}}\boldsymbol{\check{x}}^{\bar{\mathcal{S}}}}$ to be identified at $\omega$, $I_{\boldsymbol{x}^{\mathcal{S}}\boldsymbol{\check{x}}^{\bar{\mathcal{S}}}}(\omega) = 1$. However, in the above double summation, exactly one of the $I_{\boldsymbol{\tilde{x}}^{\mathcal{S}}\boldsymbol{x}^{\bar{\mathcal{S}}}}(\omega)$ is equal to 1, with all others are equal to 0. Therefore, the condition for identification of $X^{i}_{\boldsymbol{x}^{\mathcal{S}}}$ at $\omega$ is reduced to the existence of $\boldsymbol{x}^{\bar{\mathcal{S}}}$ such that $I_{\boldsymbol{x}^{\mathcal{S}}\boldsymbol{x}^{\bar{\mathcal{S}}}}(\omega) = 1$, which is true iff $\sum_{\boldsymbol{x}^{\bar{\mathcal{S}}}} I_{\boldsymbol{x}^{\mathcal{S}} \boldsymbol{x}^{\bar{\mathcal{S}}}} = 1$, which is true iff $\omega \in \mathcal{I}_{\boldsymbol{x}^{\mathcal{S}}}$, as required. 
\end{proof}
                           
\end{document}